\documentclass[12pt]{amsart}
\usepackage[T1]{fontenc}
\usepackage{graphicx} 
\usepackage{amsfonts,amsthm,latexsym,amsmath,amssymb,amscd,amsmath, mathrsfs, epsf, tikz-cd, enumitem}
\usepackage{hyperref}
\usepackage[left=25mm, right= 25mm, top=20mm, bottom=20mm, includefoot, includehead]{geometry}
\usepackage[textsize=tiny]{todonotes}
\usepackage{url}

\usepackage{tikz}
\usetikzlibrary{decorations.text,calc,arrows.meta}

\newtheorem{theorem}{Theorem}[section]
\newtheorem*{theo*}{Theorem}
\newtheorem{corollary}[theorem]{Corollary}
\newtheorem{lemma}[theorem]{Lemma}
\newtheorem{proposition}[theorem]{Proposition}

\newtheorem{claim}{Claim}
\newtheorem{ex*}{Example}[section]
\theoremstyle{definition}

\theoremstyle{remark}
\newtheorem{remark}[theorem]{Remark}
\theoremstyle{definition}

\newcommand{\brk}{{\underline{{\bf rk}}}}

\newcommand{\vv}{\mathbf{v}}
\newcommand{\ee}{\mathbf{e}}

\newcommand{\oo}{\mathbf{0}}
\newcommand{\Hom}{\mathrm{Hom}}
\newcommand{\LL}{\mathcal{L}}

\newcommand{\sat}{\mathrm{sat}}

\newcommand{\HF}[2]{H_{#1}(#2)}
\newcommand{\Slip}[2]{\mathrm{Slip}_{#1,#2}}

\newcommand{\CC}{\mathbb{C}}
\newcommand{\OO}{\mathcal{O}}

\newcommand{\PP}{\mathbb{P}}

\newcommand{\NN}{\mathbb{N}}
\newcommand{\ZZ}{\mathbb{Z}}

\newcommand{\Ann}{\mathrm{Ann}}
\newcommand{\Hilb}{\mathrm{Hilb}}

\makeatletter
\@namedef{subjclassname@2020}{%
	\textup{2020} Mathematics Subject Classification}
\makeatother

\date{}

\subjclass[2020]{}
\keywords{}

\title{Border rank lower bounds beyond weak border apolarity}

\date{}

\author{Tomasz Ma\'ndziuk}
\address{Texas A\&M University, Department of Mathematics,
	College Station, TX 77843-3368, USA}
\email{t.mandziuk@tamu.edu}

\begin{document}
	
	\begin{abstract}
    We provide first examples of border rank lower bounds obtained using border apolarity beyond   \lq\lq weak border apolarity\rq\rq. 
    Border apolarity introduced by Buczy\'nska-Buczy\'nski characterizes tensors of border rank greater than $r$ as those whose annihilator does not contain an ideal with certain properties depending on $r$ and the ambient space of the tensor. Previous applications of border apolarity depended on the algorithm developed by Conner-Harper-Landsberg. For a tensor with a large symmetry group it lists a set of ideals that satisfy some of the properties mentioned above. We give first examples where the list is non-empty but we are able to prove that no ideal on this list satisfies all necessary conditions coming from border apolarity.
    The new lower bound examples include tensors of interest in complexity:
the first Sch\"onhage triple sum of matrix multiplication tensors where the border
rank was unknown,  and the direct sum of two copies of a tensor that could potentially be used
to prove the exponent of matrix multiplication is two.  A third  example of
interest in  group theory related to work of A. Leitner is also analyzed. \end{abstract}

    \maketitle

\section{Introduction}
The goal of this paper is to illustrate, on several tensors of interest, new methods for establishing border rank lower bounds based on border apolarity developed by Buczy\'nska and Buczy\'nski in \cite{BB19}. 
The border rank of a tensor is a standard measure of the complexity of computing the corresponding bilinear map,
see \cite[Chaps. 14-15]{BCS}.
It is a famously difficult problem to determine the border rank of explicit tensors. Especially important are matrix multiplication
tensors as determining the complexity of matrix multiplication is one of the most important problems in algebraic complexity theory.
 For example, for the $2\times 2$-matrix multiplication tensor in $\CC^4\otimes \CC^4 \otimes \CC^4$, the upper bound of $7$ was known since the work of Strassen in 1969 \cite{Str69}. On the other hand, the matching lower bound was found by Landsberg in 2006 \cite{Lan06} and a first hand-checkable proof appeared in \cite{chl19}. The latter proof is based on border apolarity. The most commonly used techniques for lower bounding the border rank of tensors predating border apolarity are Koszul flattenings introduced in \cite{LO15} as a special case of Young flattenings \cite{LO11}. These extend Strassen's equations \cite{Str83}.

In its full strength, border apolarity characterizes tensors of border rank at most $r$. A tensor $T$ has border rank at most $r$ if and only if there exists an ideal $I\subseteq \Ann(T)$ that satisfies properties of two types: (i) numerical, and (ii) topological. See Section~\ref{s:notation} for the precise statements of (i) and (ii). So far, border apolarity was mainly used to improve the lower bounds on border ranks. This requires one to prove that for given $T$ and $r$ there is no ideal $I\subseteq \Ann(T)$ that satisfies properties (i) and (ii).
For a given ideal, checking if it satisfies the numerical condition is algorithmic but checking the topological condition is at this writing more of an art, and quite difficult to implement. Therefore, in practice one usually starts with the weaker version of border apolarity called weak border apolarity that says that if the border rank of $T$ is at most $r$, then there is an ideal $I\subseteq \Ann(T)$ that satisfies the numerical condition (i). So far all applications of border apolarity for border rank lower bounds were based on this weaker version. See for example \cite{Bar22}, \cite{chl19}, \cite{chl23}, \cite{Fla23}. For an example of applying border apolarity to get an upper bound on the border rank see \cite{KLSS}.

\subsection{Results}
In this paper we go beyond weak border apolarity to determine the border rank of three
tensors of interest. First some background:

In Sch\"onhage's breakthrough paper \cite{Sch81}, in addition to proving his upper bound of $2.55$
for the exponent of matrix multiplication using the direct sum of two matrix multiplication tensors, he obtained
a similar bound using the direct sum of three such. In the case of the sum of the two, he used
  the actual border rank but for the sum of three, he just used an upper bound estimate, so there
 is potential room for improvement.

Given positive integers $k,l,m$, let $M_{\langle k,l,m\rangle}$ denote the matrix multiplication tensor corresponding to the bilinear map
\[
\mathrm{Mat}_{k\times l} \times \mathrm{Mat}_{l\times m} \to \mathrm{Mat}_{k\times m}.
\]
It is a tensor in $\CC^{kl}\otimes \CC^{lm}\otimes \CC^{km}$. We write $M_{\langle k\rangle}$ instead of $M_{\langle k,k,k\rangle}$.
In \cite[Section~8]{Sch81} Sch\"onhage considers the following tensor 
\begin{equation}\label{eq:SchonhageTensor}
T=M_{\langle 1, u, 2v\rangle}  \oplus M_{\langle v,2,u\rangle} \oplus M_{\langle 2u,v,1\rangle} \in \CC^{2uv+2v+u}\otimes \CC^{2uv+2u+v}\otimes \CC^{uv+2u+2v}
\end{equation}
 and shows that $\brk(T) \le 2uv+2v+4u+4$.
Assuming $v\geq u$, the minimal possible border rank would be $2uv+2v+u$. This still leads to the possibility of improving the bound on the exponent of matrix multiplication.

Weak border apolarity in the case $(u,v)=(1,1)$ proves the border rank is six, 
in the next case $(u,v)=(1,2)$ weak border apolarity gives a lower bound of $11$ which
does not match the known upper bound of $12$.
Our first result is:
\begin{theorem}[Theorem~\ref{thm:Scho}] $\brk(M_{\langle 1, 1, 4\rangle}  \oplus M_{\langle 2,2,1\rangle} \oplus M_{\langle 2 ,2,1\rangle} )=12.$
\end{theorem}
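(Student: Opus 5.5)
The upper bound $\brk(T)\le 12$ for $T=M_{\langle 1,1,4\rangle}\oplus M_{\langle 2,2,1\rangle}\oplus M_{\langle 2,2,1\rangle}$ is Sch\"onhage's estimate $2uv+2v+4u+4$ at $(u,v)=(1,2)$ (after permuting factors, this is the tensor in \eqref{eq:SchonhageTensor}). So the work is the lower bound $\brk(T)\ge 12$. I will argue by contradiction: assume $\brk(T)\le 11$. By border apolarity there is then a $\ZZ^3$-multigraded ideal $I\subseteq\Ann(T)$ in the Cox ring $\CC[A\oplus B\oplus C]$, with $A=\CC^{8}$, $B=\CC^{7}$, $C=\CC^{9}$. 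This ideal has Hilbert function $\min(11,\dim)$ in every multidegree (the numerical condition (i)) and is a limit of ideals of $11$ reduced points (the topological condition (ii)).

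Since $T$ has a large symmetry group $G$ (a product of the symmetry groups of the three summands together with the torus scaling each summand), the fixed-point argument of Buczy\'nska--Buczy\'nski lets me take $I$ to be Borel-fixed for a Borel subgroup of $G$. I will then run the Conner--Harper--Landsberg procedure degree by degree.
\begin{enumerate}
\item First I list the Borel-fixed candidates $I_{110},I_{101},I_{011}$ of codimension $11$ in the bigraded pieces containing the corresponding flattenings $T(C^*)^\perp$, etc.
\item Next I prune these by the $(210),(120),\dots$ and $(111)$ rank conditions: the multiplication maps must have images of the correct codimension, and $I_{111}\subseteq T^\perp$.
\end{enumerate}
Since weak border apolarity only yields $11$, I expect this to leave a nonempty, explicit list of candidate ideals, probably a few families, up to the symmetric roles of the two $M_{\langle 2,2,1\rangle}$ summands.

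The new step is to rule out each survivor using condition (ii) rather than (i). My first plan uses restrictions that must persist under limits. If $I=\lim I_t$ with $I_t$ the ideal of $11$ points, then for each linear form $\ell$ a general choice makes $\ell$ a nonzerodivisor on the saturation. So I look for a linear form $x$, or a small subspace of variables, such that $I:x$, or the saturation of $I$ by the irrelevant ideal in a factor, has Hilbert function exceeding $11$ in some multidegree, or contains elements that are incompatible with $\Ann(T)$. Concretely, if $I_{110}$ contains $x\cdot B^*$-forms that force $\Hilb_I$ to drop after colon, this contradicts the fact that limits of point ideals have saturations with Hilbert polynomial $11$ and lying in $\Ann(T)$.

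A second tool is to look at the projection or restriction to a single summand, for instance $M_{\langle 1,1,4\rangle}$, where ranks are known. If a candidate's restriction gives a border-rank-$r'$ scheme for a sub-tensor with too small an $r'$, I get a contradiction with known border ranks such as $\brk(M_{\langle 2,2,1\rangle})=4$ and $\brk(M_{\langle 1,1,4\rangle})=4$. Here I would use additivity-type arguments for how the $11$ points distribute among summands.

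The main obstacle is exactly this step: for each surviving Borel-fixed candidate, I need to exhibit a concrete witness that it is not a limit of saturated ideals of $11$ points. I would try the colon/saturation Hilbert function check first, since it is computable. For candidates that pass it, I would fall back on a local analysis of the limit scheme near the fixed point, showing that a hypothetical family of $11$ points would force a $12$-dimensional span in some flattening.
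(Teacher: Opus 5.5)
Your framework is right: border apolarity, Borel-fixed ideals, the Conner--Harper--Landsberg enumeration with the $(210)$ and $(111)$ tests, and then an attack on condition (ii) for the survivors. But the decisive step, ruling out the survivors, has no working tool, so there is a genuine gap.

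\textbf{The colon/saturation checks cannot work.} They do not separate $\Slip{11}{X}$ from the rest of $\Hilb_{S[X]}^{h_{11,X}}$.
\begin{itemize}
\item Since $I\subseteq I:x\subseteq I^{\sat}$, the Hilbert functions of $S[X]/(I:x)$ and $S[X]/I^{\sat}$ are bounded by $h_{11,X}$. They can never exceed $11$.
\item Every point of this Hilbert scheme has Hilbert polynomial $11$.
\item A nonzerodivisor of each degree $\ee_i$ exists on \emph{every} $B_X$-saturated ideal, whether or not it is a limit of points.
\item Neither $I:x$ nor $I^{\sat}$ is required to lie in $\Ann(T)$, even when $\brk(T)\le 11$. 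So finding elements of them outside $\Ann(T)$ is no contradiction.
\end{itemize}

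\textbf{The additivity tool also fails.} Border rank is not additive under direct sums; strict subadditivity is exactly Sch\"onhage's phenomenon. Nothing forces a limit of $11$ points to split among the summands according to their border ranks. The fallback ``local analysis near the fixed point'' is not an argument.

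\textbf{The missing idea.} You need a computable necessary condition for $[I]\in\Slip{11}{X}$ that uses only low-degree data. The paper uses a tangent space test. Set $K=I+(S[X]_{100})^2+(S[X]_{010})^2+(S[X]_{001})^2$.
\begin{itemize}
\item The map $[I]\mapsto[K]$ is injective on $B_X$-saturated ideals. A saturated ideal with Hilbert function $h_{r,X}$ is the saturation of the ideal generated by its components in degrees $\le(1,1,1)$. This follows from monotonicity properties of Hilbert functions of saturated ideals, using that $r$ is at most $\dim S[X]_{110}$, $\dim S[X]_{101}$ and $\dim S[X]_{011}$.
\item Hence the image of $\Slip{11}{X}$ has dimension $11\cdot(8+7+7)=242$.
\item So $\dim_\CC\Hom_{S[X]}(K,S[X]/K)_\oo\ge 242$ whenever $[I]\in\Slip{11}{X}$.
\end{itemize}
The enumeration leaves $12$ candidate triples. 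Each already generates an ideal whose $(1,1,1)$ part has codimension $11$, so $K$ is determined by the triple alone. A Macaulay2 computation gives at most $215$ for these $K$, a contradiction.

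\textbf{Two smaller errors.}
\begin{itemize}
\item Sch\"onhage's bound $2uv+2v+4u+4$ at $(u,v)=(1,2)$ equals $16$, not $12$. The upper bound $12$ is just subadditivity, $4+4+4$.
\item The tensor lives in $\CC^9\otimes\CC^8\otimes\CC^8$, not $\CC^8\otimes\CC^7\otimes\CC^9$.
\end{itemize}
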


In \cite{CGLV22} it was observed that the unique up to scale skew-symmetric tensor in $(\CC^3)^{\otimes 3}$, denoted
$T_{\mathrm{skewcw},2}$ could potentially prove the exponent of matrix multiplication is two as it has the same value as the smallest small
Coppersmith-Winograd tensor. However its cost (border rank) of five led to initial pessimism.
In \cite{chl19} it was shown that its Kronecker square had a dramatic drop in border rank from
the na\"\i ve $25$ to $17$, and in \cite{HL26} it was observed that one gets a similar drop with 
$T_{\mathrm{skewcw},2}\boxtimes W$, which has border rank $9$ instead of the expected $10$.
Here 
\begin{equation}\label{eq:Wtensor}
W = a_1\otimes b_1\otimes c_2 + a_1\otimes b_2\otimes c_1 + a_2\otimes b_1\otimes c_1
\end{equation}
is the unique up to change of bases rank $3$ tensor in $\CC^2\otimes \CC^2\otimes \CC^2$. 

This led to the question as to what other drops can one get with this tensor.
The next natural thing to try would be 
$T_{\mathrm{skewcw},2}\boxtimes M_{\langle 1\rangle}^{\oplus 2}= T_{\mathrm{skewcw},2}\oplus T_{\mathrm{skewcw},2}$.
Here weak border apolarity says that a drop to $9$ still might be possible. Our next result
is that this is not the case:

\begin{theorem}[Theorem~\ref{thm:skew}] $\brk(T_{\mathrm{skewcw},2}\boxtimes M_{\langle 1\rangle}^{\oplus 2})=10$.
\end{theorem}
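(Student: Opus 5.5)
The upper bound $\brk(T_{\mathrm{skewcw},2}\oplus T_{\mathrm{skewcw},2})\le 10$ is immediate from subadditivity and $\brk(T_{\mathrm{skewcw},2})=5$. The content is the lower bound. We argue by contradiction: assume border rank at most $9$. By border apolarity there is then a multigraded ideal $I\subseteq \Ann(T)$, $T=T_{\mathrm{skewcw},2}\oplus T_{\mathrm{skewcw},2}\in \CC^6\otimes\CC^6\otimes\CC^6$, satisfying two conditions. The first is numerical: $\dim I_{(i,j,k)}$ has codimension $\min(9,\dim S_{(i,j,k)})$. The second is topological: $I$ is a limit of ideals of $9$ distinct points. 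Since a Borel subgroup $B$ of the symmetry group $G_T$ fixes a point in the closed set of such limits, we may assume $I$ is $B$-fixed.

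Here $G_T$ contains $(SL_3\times SL_3)\rtimes \ZZ_2$ acting diagonally on the two blocks, together with the torus scaling the blocks, and $\ZZ_3$ permuting factors.

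\textbf{Step 1.} Run the Conner--Harper--Landsberg algorithm in low multidegrees. By \cite{chl19}-type arguments, weak border apolarity does not exclude $9$ (as stated in the introduction), so the list of candidate $B$-fixed ideals is non-empty. We enumerate it explicitly:
\begin{itemize}
\item $I_{110}\subseteq T(C^*)^\perp$ of codimension $9$ in $A^*\otimes B^*$, i.e.\ dimension $36-9-6=21$ inside the $30$-dimensional annihilator, $B$-fixed;
\item the same for $I_{101}$ and $I_{011}$;
\item the surviving triples, i.e.\ those satisfying the $(210)$, $(120)$, \dots\ and $(111)$ rank conditions.
\end{itemize}
I expect the survivors to be few, up to symmetry. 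Typically they are \lq\lq unbalanced\rq\rq\ ideals, for instance ones whose degree-$(1,0,0)$-generated part is concentrated on one summand, reflecting that one copy could look like it needs only $4$ points.

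\textbf{Step 2.} Use the topological condition for each survivor, beyond the numerical one. The concrete tool I would try first is a restriction/projection argument. If $I$ is a limit of ideals of $9$ points, then for a linear form, or a projection onto one summand, the corresponding saturated restriction remains a limit of at most $9$ points. In particular the Hilbert function of $I$ after quotienting by a block variable must be realizable by a limit of at most $9$ points.

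A complementary tool is to check that the scheme defined by the saturation of $I$ (or by $I$ in suitable degrees) has the forced length. Then use that, for $T$ restricted to each copy, the border apolarity bound for $T_{\mathrm{skewcw},2}$ is $5$. This forces the points to split as at least $5+5$ between the summands: the limit of $9$ points must degenerate in a way compatible with the two-block grading, and the $\ZZ_2$-symmetric direct sum structure then forces $10$ points.

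Alternatively, if a candidate ideal contains $I_{100}$-type forms whose saturation has the wrong length, we exclude it by showing that its degree-$(1,1,1)$ component cannot extend to a saturated ideal with Hilbert function bounded by $9$ in all multidegrees. This would be checked through the $(2,1,1)$ and $(1,1,2)$ syzygy conditions, which are stronger than the $(111)$ test.

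The main obstacle is Step 2, namely converting the topological condition into a checkable obstruction for the specific survivors. The enumeration is routine computation. Excluding survivors requires identifying an invariant preserved under limits, such as upper semicontinuity of Hilbert functions of the saturations or lengths of the schemes, which fails for each candidate. I would attempt the length/saturation argument first.
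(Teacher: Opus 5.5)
\textbf{There is a genuine gap in Step~2.} Your Step~1 matches the paper: enumerate the $\mathbb{B}_T$-fixed $110$, $101$, $011$ spaces and keep the candidate triples. (One slip: $\dim I_{110}=36-9=27$, since $I_{110}^\perp$ is $T(C^\vee)$ plus a $3$-dimensional space; it is not $21$.) Step~2 carries the entire content of the theorem, and none of the tools you propose there is both valid and non-vacuous as a test of $[I]\in\mathrm{Slip}_{9,X}$.

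\begin{itemize}
\item \emph{Forced length.} This check is automatic. For every $[I]\in\Hilb_{S[X]}^{h_{9,X}}$, $I$ and $I^{\sat}$ agree in sufficiently positive multidegrees, so the saturation always cuts out a length-$9$ scheme.
\item \emph{Extending to degrees $(2,1,1)$, $(1,1,2)$.} Checking Hilbert functions there is still only the numerical condition. $\Ann(T)$ contains points of $\Hilb_{S[X]}^{h_{9,X}}$ (Remark~\ref{rmk:about_techniques}), hence by Borel's fixed point theorem $\mathbb{B}_T$-fixed ones. So no test depending only on the Hilbert function of $I$ can eliminate all candidates; this is exactly why weak border apolarity fails here.
\item \emph{The $5+5$ splitting.} This argument is circular. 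It presupposes that border rank is additive on this direct sum, which is the statement being proved and is false for direct sums in general (Sch\"onhage). The $9$ points of a border decomposition need not lie on $\PP E\times\PP E\times\PP E$ or $\PP F\times\PP F\times\PP F$. Killing the variables of one block only gives a quotient with Hilbert function \emph{at most} $9$, whose degree-$(1,1,1)$ part need not even annihilate $T_{\mathrm{skewcw},2}$. No lower bound of $5$ per block comes out.
\end{itemize}

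\textbf{The missing idea is an invariant that genuinely sees the topological condition.} The paper uses the square test (Proposition~\ref{prop:st_general}).
\begin{itemize}
\item The function $[I]\mapsto \HF{S[X]/I^2}{\mathbf{u}}$ is upper semicontinuous on $\Hilb_{S[X]}^{h_{r,X}}$.
\item For $r$ general points, $I^2\subseteq I_Z$, where $Z$ is the union of the double points.
\item Terracini's lemma plus non-defectivity (Lickteig, resp.\ Abo) then give $\HF{S[X]/I^2}{2,1,1}\ge 16\cdot 9=144$ and $\HF{S[X]/I^2}{2,2,0}\ge 11\cdot 9=99$ for every $[I]\in\mathrm{Slip}_{9,X}$.
\end{itemize}

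With this, the paper adjoins $\mathbb{B}_T$-fixed $200$ spaces to the $22$ candidate triples, leaving $51$ quadruples that pass all total degree $3$ tests.
\begin{itemize}
\item $50$ of them fail the $(211)$ square test. One is done by hand: the $I_{200}I_{011}$ part contributes at least $18$, so Lemma~\ref{lem:partial211ST} would force $\dim(\mathrm{im} f_{110}\cap\mathrm{im} f_{101})\ge 135$, while a block-by-block count gives at most $132$.
\item The last quadruple fails the $(220)$ square test once a $020$ space is added.
\end{itemize}
Without some such semicontinuity argument, your plan reduces to weak border apolarity, which provably cannot give the bound.
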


Our third computation deals with the following tensor from \cite[Definition~17]{Lei16}
\begin{align}\label{eq:Leitner}
T_{\mathrm{Leit},{5}} = &a_1 \otimes (b_1 \otimes c_1 + b_2 \otimes  c_2 + b_3 \otimes c_3 + b_4 \otimes  c_4 + b_5\otimes c_5)\\
& + a_2 \otimes (b_1 \otimes c_3 + b_3 \otimes c_5) + a_3 \otimes b_1 \otimes c_4 + a_4 \otimes b_2 \otimes c_4 + a_5 \otimes b_2 \otimes c_5.\notag
\end{align}

In the language of \cite{LM17}, it is an example of an $A$-abelian tensor that is not of minimal border rank as it fails the $\mathrm{End}$-closed condition \cite{LM08}.
Weak border apolarity and Koszul flattenings could not rule out $\brk(T_{\mathrm{Leit},{5}}\boxtimes W)$ being $11$ rather than the na\"\i ve $12$. Our third lower bound is
\begin{theorem}[Theorem~\ref{thm:leit}]
$\brk(T_{\mathrm{Leit},{5}}\boxtimes W) = 12$.
\end{theorem}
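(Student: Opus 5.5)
The equality $\brk(T_{\mathrm{Leit},5}\boxtimes W)=12$ splits into an upper and a lower bound. The upper bound is immediate from submultiplicativity of border rank under Kronecker products. We have $\brk(W)=2$, and $\brk(T_{\mathrm{Leit},5})\le 6$, since $T_{\mathrm{Leit},5}\in\CC^5\otimes\CC^5\otimes\CC^5$ is $A$-abelian but not of minimal border rank; a rank $6$ border decomposition can also be written down by hand. Hence $\brk(T_{\mathrm{Leit},5}\boxtimes W)\le 12$. The whole difficulty is the lower bound $\brk>11$. For it we apply border apolarity to $T=T_{\mathrm{Leit},5}\boxtimes W\in\CC^{10}\otimes\CC^{10}\otimes\CC^{10}$ with $r=11$: we must show that no ideal $I\subseteq\Ann(T)$ in the Cox ring of $\PP^9\times\PP^9\times\PP^9$ satisfies both the numerical condition (multigraded Hilbert function equal to $\min(r,\dim)$ in each multidegree) and the topological condition (being a limit of ideals of $r$ distinct points).

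First I would compute the symmetry group $G_T\subseteq GL_{10}^{\times 3}$, or at least a Borel-type solvable subgroup, by combining the symmetries of $T_{\mathrm{Leit},5}$ with the torus and unipotent symmetries of $W$. By the Buczyńska–Buczyński fixed-point argument we may then restrict to Borel-fixed ideals $I$. Next I would run the Conner–Harper–Landsberg algorithm in multidegrees $(1,0,0),(0,1,0),(0,0,1)$, then $(1,1,0)$ and its permutations, and then $(1,1,1)$. This produces the finite list of candidate Borel-fixed spaces $I_{100}, I_{110},\dots$ satisfying the weak conditions. Since weak border apolarity fails to exclude $11$, this list is nonempty, and I expect only a few candidates to survive. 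The natural guess is that the survivors are degenerate along a $T_{\mathrm{Leit},5}$ factor in a way related to the failure of the End-closed condition.

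The main step, and the main obstacle, is to exclude each surviving candidate using properties beyond the numerical ones. I would try three things in turn.
\begin{enumerate}
\item Compute the saturation, or the ideal generated in higher multidegrees such as $(2,1,0)$ or $(1,1,1)$ built from the candidate's low-degree parts, and show that its Hilbert function drops below $11$ somewhere. That would contradict the requirement that $I$ be a limit of radical ideals of $11$ points.
\item Use the consequence of the topological condition that $I_{100}$ must be a limit of codimension-$11$ spaces spanned by rank-one conditions. Concretely, the linear spaces $I_{110}^\perp$ and $I_{101}^\perp$ must lie in the closure of spaces spanned by $11$ rank-one tensors, so their generic or border rank can be tested with flattenings and Koszul flattenings.
\item Exploit the $A$-abelian structure: the End-closed condition should transfer to the $11$-dimensional spaces $I_{011}^\perp\subseteq B\otimes C$. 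Such a space must be a limit of spaces of simultaneously diagonalizable matrices, so it must be closed under a suitable composition. I would check by explicit calculation that each candidate violates this closure.
\end{enumerate}
The closure check in the third approach is the one I would attempt first, since it is concrete linear algebra on each candidate. The risk is that the candidate list is large or has positive-dimensional families, so the case analysis may need the symmetry reduction to be pushed carefully.
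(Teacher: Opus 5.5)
There is a genuine gap in the lower bound. (The upper bound needs a small fix: failing the End-closed condition only gives $\brk(T_{\mathrm{Leit},5})\ge 6$. The bound $\le 12$ uses the known equality $\brk(T_{\mathrm{Leit},5})=6$ of Landsberg and Micha\l{}ek, together with $\brk(W)=2$.)

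Everything hinges on a checkable necessary condition for $[I]\in\mathrm{Slip}_{11,X}$ beyond the Hilbert function, and none of your three options supplies one that works here.
\begin{itemize}
\item Option (1) is weak border apolarity in disguise. A Hilbert function drop in an ideal generated by the candidate's pieces only violates the numerical condition. A drop in a saturation violates nothing, since points of $\mathrm{Slip}_{11,X}$ need not be saturated. Moreover, there exist ideals $I\subseteq\Ann(T)$ with $H_{S[X]/I}=h_{11,X}$. Their locus is closed and $\mathbb{B}_T$-stable in a projective scheme, so by Borel's fixed point theorem one of them is $\mathbb{B}_T$-fixed. That candidate can never be excluded this way.
\item Option (3) misapplies End-closedness, which is a criterion for \emph{minimal} border rank: it concerns $m$-dimensional spaces of $m\times m$ matrices that are limits of simultaneously diagonalizable spaces. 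Here $I_{011}^\perp$ is $11$-dimensional inside $\CC^{10}\otimes\CC^{10}$ and is a limit of spans of $11$ rank-one matrices. No $11$-dimensional space of simultaneously diagonalizable $10\times 10$ matrices exists, so there is no composition-closure to test.
\item Option (2), for $I_{110}^\perp$, is a correct necessary condition. (Note $I_{100}=0$ since $r>10$.) But deciding whether the associated tensor in $\CC^{11}\otimes\CC^{10}\otimes\CC^{10}$ has border rank $11$ is itself a minimal border rank problem. You give no reason Koszul flattenings would eliminate the survivors.
\end{itemize}

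The idea the paper uses is the $(211)$ square test. The function $[I]\mapsto H_{S[X]/I^2}(2,1,1)$ is upper semicontinuous on $\Hilb^{h_{r,X}}_{S[X]}$, so on $\mathrm{Slip}_{r,X}$ it is bounded below by its value at ideals of $r$ general points. For such an ideal, $I^2\subseteq I_Z$, where $Z$ is the union of the double points supported there. Terracini's lemma together with Lickteig's non-defectivity of secant varieties of the Segre $(\PP^n)^{\times 3}$ gives $H_{S[X]/I_Z}(2,1,1)=(3n+1)r$; non-defectivity is carried from $\OO(1,1,1)$ to $\OO(2,1,1)$. Hence every $[I]\in\mathrm{Slip}_{11,X}$ with $n=9$ satisfies $H_{S[X]/I^2}(2,1,1)\ge 308$.

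Since $I_{100}=I_{010}=I_{001}=0$, one has $(I^2)_{211}=I_{110}I_{101}+I_{200}I_{011}$, which depends only on a quadruple of degree-two pieces. The paper enumerates the $\mathbb{B}_T$-fixed codimension-$11$ quadruples $(I_{110},I_{101},I_{011},I_{200})$ in $\Ann(T)$ that pass all total degree $3$ tests; there are $16$. It then checks that each violates this inequality. Without this test, or some other genuinely topological test actually verified on the survivors, your argument stops exactly where weak border apolarity stops.
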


We also recover the current best lower bound for the border rank of $M_{\langle 3\rangle}$ from \cite{chl19}.
The original proof is based on analyzing $512$ ideals and proving that they all fail the numerical condition.
Here we consider $8$ ideals and prove that they all fail the topological condition. We present a hand-checkable proof for one of the $8$ ideals.

\subsection{Overview of the method}
A key to implementing border apolarity is that when a tensor $T$ has a nontrivial connected solvable group that stabilizes it
   then one needs to consider only ideals satisfying conditions (i) and (ii)
   fixed by the solvable group. Suppose we are given $T$, a connected solvable group $\mathbb{B}$ stabilizing $T$,  and $r$ and we want to prove that the border rank of $T$ is larger than $r$ using border apolarity. We may proceed in two steps. First, we enumerate all $\mathbb{B}$-fixed ideals $I\subseteq \Ann(T)$ that satisfy the numerical condition (i). If there are no such ideals we get the desired bound on the border rank by weak border apolarity. If there are ideals satisfying the numerical condition we need to prove that none of them satisfies the topological condition.
The first step has an algorithmic solution \cite{chl19} which means that currently the main obstacle to successfully applying border apolarity is that we do not have an easy way to test the topological condition. 

In this paper we provide some tests for the topological condition based on earlier results in \cite{Man22} and \cite{Man23} and illustrate their utility by  proving the above optimal lower bounds for the border rank. These are the first examples where the border rank can be computed using border apolarity even though its weak version is not sufficient. Despite the fact that there is a need for developing better tests of the topological condition, we demonstrate that proving that an ideal does not satisfy this condition is not hopeless and border apolarity can still be successfully employed. For other implementations of the topological condition see \cite{JM25}. For the theory of unrestrictions that can reduce the question to testing the topological condition in the minimal border rank case see \cite{JJ26}.

\subsection{Description of the necessary conditions beyond weak border apolarity used in this paper}

We provide two tests of the topological condition. Both are  based on semicontinuity. The first test (Proposition~\ref{prop:ts_test}) consists of computing the dimension of the tangent space to a certain parameter space. As explained in Section~\ref{s:notation}, there is a parameter space for the ideals satisfying the numerical condition (i). We consider a morphism from this parameter space to another scheme and claim that the dimension of the tangent space at the image of $[I]$ must be large enough if $[I]$ satisfies the topological condition (ii). Based on this test, we establish in Theorem~\ref{thm:Scho} the border rank of the simplest case of the Sch\"onhage triple sum for which the previously employed techniques are not sufficient.

The second test of the topological condition (Proposition~\ref{prop:st_general}) is based on the semicontinuity of the dimensions of the fibers of a coherent sheaf. Namely, we consider a coherent sheaf on the parameter space discussed above and prove that if $I$ satisfies the topological condition (ii), then the dimension of the fiber of that sheaf over $[I]$ is large enough. Based on this test we establish in Theorem~\ref{thm:leit} and Theorem~\ref{thm:skew} the border ranks of two tensors. Both tensors are constructed from simpler tensors. The first is an example of a Kronecker product and the second is an example of a direct sum. The border rank is submultiplicative under Kronecker product and subadditive under direct sum. There are examples of border rank being strictly submultiplicative \cite{CGJ19} or subadditive \cite{Sch81}. We prove that for these two tensors there is no border rank drop.

We stress that both tests amount to   computing the dimension of a vector space constructed from $I$ in an explicit way, which makes it easy to test them using a computer. In principle, they can also be performed without a computer which we illustrate for the second test on two examples (see the proof of Theorem~\ref{thm:skew} and Section~\ref{s:m3}). Furthermore, even though enumerating ideals satisfying the numerical condition (i) has an algorithmic solution, it is important to be able to rule out the ideals by looking only at low degree parts of these ideals. This is because, once there is at least one ideal satisfying (i) there are
usually many such ideals. Both tests require only information about a small part of the ideal. In fact, sometimes it is enough to look at only a single degree. This is illustrated  in Section~\ref{s:m3} where we recover the current best lower bound for the border rank of the $3\times 3$ matrix multiplication tensor from \cite{chl19} using our test of the topological condition.

\section{Background and notation}\label{s:notation}
In this section we introduce the notation used in the subsequent sections and recall some background material.

\subsection{Notation}\label{subsec:not}
We work over the field of complex numbers $\CC$. For a complex vector space $V$, its dual is denoted $V^\vee$. Given a nonzero vector $F\in V$ let $[F]$ denote its image in $\PP V$.

For a $\mathbb{Z}^3$-graded module $M$ over a $\mathbb{Z}^3$-graded ring and $\mathbf{u}\in \mathbb{Z}^3$ let $M_\mathbf{u}$ denote the space of homogeneous elements of $M$ of degree $\mathbf{u}$ and $H_{M}\colon \mathbb{Z}^3\to \mathbb{N}$ be the Hilbert function of $M$, i.e., $\mathbf{u}\mapsto \dim_\CC M_{\mathbf{u}}$.

Let $A,B,C$ be positive-dimensional complex vector spaces of dimensions $n_1+1,n_2+1, n_3+1$, respectively.
Let $a_0,\ldots, a_{n_1}$, $b_0,\ldots, b_{n_2}$ and $c_0,\ldots, c_{n_3}$ be bases for $A,B$ and $C$, respectively. Let $\alpha_{0}, \ldots, \alpha_{n_1}, \beta_0,\ldots, \beta_{n_2}$ 
and $\gamma_0,\ldots, \gamma_{n_3}$ be the dual basis for $A^\vee, B^\vee$ and $C^\vee$, respectively.

Let $X=\PP A\times \PP B\times \PP C$ and 
\[
S[X] = \CC[\alpha_{0},\ldots, \alpha_{n_1}, \beta_0,\ldots, \beta_{n_2}, \gamma_0,\ldots, \gamma_{n_3}]
\]
be its Cox ring, the $\mathbb{Z}^3$-graded polynomial ring with $\deg(\alpha_{i}) =(1,0,0)$, $\deg(\beta_j) = (0,1,0)$ and $\deg(\gamma_k) = (0,0,1)$ for all $1\le i\le n_1$, $0\le j\le n_2$ and $0\le k\le n_3$. Given a $\mathbb{Z}^3$-graded subspace $V\subseteq S[X]$, $(V)$ is the ideal of $S[X]$ generated by $V$.  For a $\mathbb{Z}^3$-graded $S[X]$-module $M$ and $\mathbf{u} = (u_1, u_2, u_3)$ with $u_i < 10$ for all $i$ we write $M_{u_1u_2u_3}$ instead of $M_{(u_1,u_2,u_3)}$.

In what follows when  we consider a tensor $T$, if we do not specify otherwise, it is assumed that it lives in $A\otimes B\otimes C$.

\subsection{Macaulay's inverse systems}
Let $\widetilde{S[X]} = \CC_{\mathrm{dp}}[a_{0}, \ldots, a_{n_1}, b_{0},\ldots, b_{n_2}, c_0, \ldots,c_{n_3}]$
be the dual $\mathbb{Z}^3$-graded space of $S[X]$, i.e., $\widetilde{S[X]}_\mathbf{u} = (S[X]_\mathbf{u})^\vee$ for all $\mathbf{u}\in \mathbb{Z}^3$. 
It is an $S[X]$-module with the module structure defined on a monomial $m$ of $\widetilde{S[X]}$ and variable $\theta\in \{\alpha_0,\ldots, \gamma_{n_3}\}$ by 
\[
\theta \lrcorner m = \begin{cases}
 0 & \textrm{if } \theta\textrm{ does not divide } m\\
 m/\theta & \textrm{otherwise.}
\end{cases}
\]
Given a $\mathbb{Z}^3$-homogeneous ideal $I\subseteq S[X]$, let $I^\perp = \{F\in \widetilde{S[X]} \mid \Theta \lrcorner F = 0 \text{ for all } \Theta\in I\}$. It is a $\mathbb{Z}^3$-graded $S[X]$-module. Conversely, given a $\mathbb{Z}^3$-homogeneous $S[X]$-submodule $M\subseteq \widetilde{S[X]}$, let $M^\perp = \{\Theta\in S[X] \mid \Theta\lrcorner F = 0 \text{ for all } F\in M\}$. It is a $\mathbb{Z}^3$-homogeneous ideal of $S[X]$.
In this way we obtain an inclusion reversing bijection 
\[
\big\{\mathbb{Z}^3\textrm{-homogeneous ideals }I\subseteq S[X]\big\} \overset{()^\perp}{\rightleftarrows} \big\{\mathbb{Z}^3\textrm{-homogeneous }S[X]\textrm{-submodules of }\widetilde{S[X]}\big\}.
\]
Given a $\mathbb{Z}^3$-graded ideal $I\subseteq S[X]$ and $\mathbf{u}\in \mathbb{Z}^3$ we have $((I_{\mathbf{u}})^\perp)_\mathbf{u} = (I^\perp)_{\mathbf{u}}$ so we use the simpler notation $I_\mathbf{u}^\perp$ as it should not lead to confusion. For any $\mathbf{u}\in \mathbb{Z}^3$ and a $\mathbb{Z}^3$-homogeneous ideal $I\subseteq S[X]$ we have $\dim_\CC I^\perp_\mathbf{u}  = \HF{S[X]/I}{\mathbf{u}}$.
For a homogeneous element $T\in \widetilde{S[X]}$, we denote the set of all $\Theta\in S[X]$ with $\Theta\lrcorner T = 0$ by $\Ann(T)$. This is a $\mathbb{Z}^3$-homogeneous ideal of $S[X]$ called the \emph{annihilator} of $T$.
\subsection{Border rank and border apolarity}

Let $B_X = (S[X]_{111})$ be the irrelevant ideal. For a $\mathbb{Z}^3$-homogeneous ideal $I\subseteq S[X]$, its saturation with respect to $B_X$ is denoted $I^\sat$.

Given a non-degenerate projective variety $X\subseteq \PP V$ and a point $p\in \PP V$, the $X$-\emph{border rank} of $p$ denoted $\brk_X(p)$ is the smallest integer $r$ such that
$p$ belongs to the Zariski closure of the following set
\[
\{q\in \PP V \mid \textrm{there exist } x_1,\ldots, x_r \in X \textrm{ such that } q\in \langle x_1,\ldots, x_r\rangle\}
\]
where $\langle x_1,\ldots, x_r\rangle$ is the projective linear span of $x_1,\ldots, x_r$. For $F\in V\setminus\{0\}$ we define $\brk_X(F) = \brk_X([F])$.
Here we are mainly interested in the case where $X=\mathbb{P}A \times \mathbb{P}B\times \mathbb{P}C$ with the Segre embedding into $\PP (A\otimes B\otimes C)$ so we suppress $X$ from the notation and write $\brk(F)$.

We now present the main tool of this paper---border apolarity from \cite{BB19}. We state it only for three factor Segre varieties. A slightly more general setup is used in the last section. Let $X$ be the Segre variety as in Subsection~\ref{subsec:not}.

Given a positive integer $r$, let $h_{r,X}\colon \mathbb{Z}^3\to \mathbb{N}$ given by $h_{r,X}(\mathbf{u}) = \min \{r, \dim_\CC S[X]_\mathbf{u}\}$ be the Hilbert function of $r$ general points in $X$.
Given any $h\colon \mathbb{Z}^3\to \mathbb{N}$ there is a projective scheme $\Hilb_{S[X]}^{h}$ called the \emph{multigraded Hilbert scheme} that parameterizes all $\mathbb{Z}^3$-homogeneous ideals $I$ of $S[X]$ with $H_{S/I} = h$ (see \cite{HS04}). The closed point of $\Hilb_{S[X]}^h$ corresponding to ideal $I$ is denoted $[I]$.
In the special case that $h=h_{r,X}$, let $\mathrm{Slip}_{r,X}$ be the Zariski closure of the following set
\[
\{[I]\in \Hilb_{S[X]}^{h_{r,X}} \mid I \textrm{ is radical and }B_X\textrm{-saturated}\}.
\]
This is an irreducible component of $\Hilb_{S[X]}^{h_{r,X}}$ \cite[Proposition~3.13]{BB19} called Slip (\emph{scheme of limits of ideals of points}).

\begin{theorem}[border apolarity~{\cite[Theorems~3.15~and~4.3]{BB19}}]\label{ba}
  Let $X = \PP A\times \PP B \times \PP C \subseteq \PP (A\otimes B\otimes C) = \PP (\widetilde{S[X]}_{111})$ be the Segre variety. If $T\in A\otimes B\otimes C$ is nonzero, then $\brk(T) \le r$ if and only if there exists an ideal $I\subseteq \Ann(T)$ such that $[I]\in \Slip{r}{X}$. 

  Furthermore, if $\mathbb{B}\subseteq \mathrm{Aut}(X)$ is a connected solvable subgroup preserving $[T] \in \PP(\widetilde{S[X]}_{111})$, then $\brk(T) \le r$ if and only if there exists a $\mathbb{B}$-fixed ideal $I\subseteq \Ann(T)$ such that $[I]\in \Slip{r}{X}$.
\end{theorem}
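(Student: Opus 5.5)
This statement is the border apolarity theorem of Buczy\'nska--Buczy\'nski, and I would prove it by passing between limits of $r$-tuples of points in $X$ and limits of their ideals in the multigraded Hilbert scheme.

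\textbf{($\Leftarrow$).} Suppose $I\subseteq\Ann(T)$ with $[I]\in\Slip{r}{X}$.
\begin{enumerate}
\item Since $\Slip{r}{X}$ is the closure of the locus of radical saturated ideals, choose a curve $I_t\to I$ with $I_t$ for $t\neq 0$ the ideal of a set of $r$ distinct points $x_1(t),\dots,x_r(t)\in X$. Here the Hilbert function is $h_{r,X}$, so the points impose independent conditions in degree $(1,1,1)$ as long as $r\le\dim S[X]_{111}$.
\item For $t\neq 0$, the space $(I_t)_{111}^\perp\subseteq A\otimes B\otimes C$ is exactly the linear span of the $x_i(t)$. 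This uses the standard fact that a degree-$(1,1,1)$ form vanishes at a rank-one point iff it annihilates the corresponding decomposable tensor.
\item The spaces $(I_t)_{111}^\perp$ form a curve in the Grassmannian $G(r,A\otimes B\otimes C)$, because $\dim (I_t)_{111}^\perp=h_{r,X}(1,1,1)$ is constant. By properness of the Grassmannian the limit exists and equals $I_{111}^\perp$.
\item $I\subseteq\Ann(T)$ gives $T\in I_{111}^\perp$. A point of the limiting $r$-plane is a limit of points of the spans $\langle x_1(t),\dots,x_r(t)\rangle$, so $T$ lies in the closure of the $r$-th secant variety and $\brk(T)\le r$.
\end{enumerate}

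\textbf{($\Rightarrow$).} Suppose $\brk(T)\le r$.
\begin{enumerate}
\item Write $T=\lim_{t\to0}T_t$ with $T_t\in\langle x_1(t),\dots,x_r(t)\rangle$ for general (distinct, general-position) points, so each ideal $I_t=I(\{x_i(t)\})$ is radical, saturated, and has Hilbert function $h_{r,X}$.
\item By projectivity of $\Hilb_{S[X]}^{h_{r,X}}$ (Haiman--Sturmfels), the curve $[I_t]$ has a limit $[I]$, which lies in $\Slip{r}{X}$ by definition.
\item It remains to show $I\subseteq\Ann(T)$. Fix a degree $\mathbf{u}$ and use apolarity: for $t\neq0$, $(I_t)_\mathbf{u}$ annihilates every element of $\langle x_i(t)\rangle$ via contraction in the relevant degrees. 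Precisely, $\Theta\lrcorner T_t$ in degree $(1,1,1)-\mathbf{u}$ is a combination of contractions of the decomposable $x_i(t)$, each of which vanishes since $\Theta(x_i(t))=0$.
\item Hence $(I_t)_\mathbf{u}\subseteq\Ann(T_t)_\mathbf{u}$. Annihilation is a closed condition on the pair $\bigl((I_t)_\mathbf{u},T_t\bigr)$ in a Grassmannian times a vector space, so it persists in the limit.
\end{enumerate}

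\textbf{Main obstacle.} I expect the main difficulty to be the choice of points in the first step of ($\Rightarrow$): general points must have Hilbert function exactly $h_{r,X}$ in every multidegree, which is needed for $[I_t]$ to lie in this particular Hilbert scheme. I would handle this by taking points generic in $X^r$ and using that imposing independent conditions is an open condition; it is nonempty since $h_{r,X}$ is by definition the Hilbert function of $r$ general points. The approximating tuples can be chosen generic because the set of $r$-tuples in general position is dense, so the secant variety is the closure of spans of such tuples.

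\textbf{Equivariant refinement.} Consider the closed subset $Z=\{[I]\in\Slip{r}{X}: I\subseteq\Ann(T)\}$. It is projective, and it is $\mathbb{B}$-stable because $\mathbb{B}$ preserves $[T]$, hence preserves $\Ann(T)$, and preserves $\Slip{r}{X}$. By the first part $Z$ is nonempty. Borel's fixed point theorem, applied to the connected solvable group $\mathbb{B}$ acting on the projective variety $Z$, gives a $\mathbb{B}$-fixed point, which is the required $\mathbb{B}$-fixed ideal.
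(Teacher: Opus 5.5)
Your proof is correct and is essentially the argument of Buczy\'nska--Buczy\'nski that the paper cites rather than reproves. Limits in the projective multigraded Hilbert scheme, together with closedness of the condition $I_{\mathbf{u}}\lrcorner T=0$, give the equivalence, and Borel's fixed point theorem applied to the closed $\mathbb{B}$-stable locus $\{[I]\in\Slip{r}{X} : I\subseteq\Ann(T)\}$ gives the equivariant version.

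The one step worth tightening is the one you flagged as your main obstacle: having Hilbert function $h_{r,X}$ is a priori infinitely many open conditions on $X^r$. However, the Hilbert function of the saturated ideal of $r$ distinct points is nondecreasing in each direction (cf.\ Lemma~\ref{lem:properties_of_HF}) and bounded by $r$, so only finitely many of these conditions are needed, and the good locus is indeed a nonempty open set.
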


Theorem~\ref{ba} characterizes $[T]$ with $\brk(T) \le r$ but proving that a given $[I]\in \Hilb_{S[X]}^{h_{r,X}}$ is not in $\Slip{r}{X}$ is extremely difficult in general. Therefore, the following necessary condition that is significantly easier to check is often very useful.

\begin{theorem}[weak border apolarity~{\cite[Theorems~1.2]{BB19}}]\label{wba}
 Let $X = \PP A\times \PP B \times \PP C \subseteq \PP (A\otimes B\otimes C) = \PP (\widetilde{S[X]}_{111})$ be the Segre variety. If $T\in A\otimes B\otimes C$ is nonzero and $\brk(T) \le r$, then there exists an ideal $I\subseteq \Ann(T)$ such that $H_{S[X]/I}=h_{r,X}$. 

  Furthermore, if $\mathbb{B}\subseteq \mathrm{Aut}(X)$ is a connected solvable subgroup preserving $[T] \in \PP(\widetilde{S[X]}_{111})$ and $\brk(T) \le r$, then there exists a $\mathbb{B}$-fixed ideal $I\subseteq \Ann(T)$ such that $H_{S[X]/I}=h_{r,X}$.
\end{theorem}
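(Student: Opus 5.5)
The statement follows directly from Theorem~\ref{ba}; the only work is unwinding the definition of $\Slip{r}{X}$. Assume $T\neq 0$ and $\brk(T)\le r$. By the first part of Theorem~\ref{ba} there is an ideal $I\subseteq \Ann(T)$ with $[I]\in \Slip{r}{X}$. By definition, $\Slip{r}{X}$ is the Zariski closure of a subset of $\Hilb_{S[X]}^{h_{r,X}}$, and this Hilbert scheme is projective, hence closed. So $\Slip{r}{X}\subseteq \Hilb_{S[X]}^{h_{r,X}}$. A closed point $[I]$ of $\Hilb_{S[X]}^{h_{r,X}}$ is, by the defining property of the multigraded Hilbert scheme \cite{HS04}, a $\mathbb{Z}^3$-homogeneous ideal with $H_{S[X]/I}=h_{r,X}$. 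This gives the first assertion. The equivariant assertion is identical: we apply the second part of Theorem~\ref{ba} to obtain a $\mathbb{B}$-fixed $I\subseteq \Ann(T)$ with $[I]\in\Slip{r}{X}$, and then forget everything except the Hilbert function.

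If instead one wants a proof that does not pass through the full strength of Theorem~\ref{ba}, the plan has three steps.

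\emph{Step 1 (limits of point ideals).} Write $[T]$ as a limit of points $[T_t]$, where each $T_t$ lies in the span of $r$ points of $X$. For general $t$ we may assume these $r$ points are in general position. Their saturated radical ideal $I_t$ then has Hilbert function $h_{r,X}$, and $I_t\subseteq \Ann(T_t)$, since every form vanishing at the points annihilates their span.

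\emph{Step 2 (closedness).} Because $\Hilb_{S[X]}^{h_{r,X}}$ is projective, the curve $t\mapsto [I_t]$ has a limit $[I]$ in it. The incidence condition $I_{\mathbf u}\lrcorner F_{\mathbf u}=0$ is closed, so it passes to the limit and gives $I\subseteq \Ann(T)$.

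\emph{Step 3 (the $\mathbb{B}$-fixed version).} The set of $[I]\in\Slip{r}{X}$ (or in the Hilbert scheme) with $I\subseteq \Ann(T)$ is closed, and it is $\mathbb{B}$-stable because $\mathbb{B}$ preserves $[T]$ and therefore $\Ann(T)$. It is nonempty and projective. By the Borel fixed point theorem, the connected solvable group $\mathbb{B}$ has a fixed point in it.

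The main obstacle in this direct route is Step 1: arranging that the approximating points can be chosen so that their ideal has exactly the generic Hilbert function $h_{r,X}$ in every multidegree. This is where one uses genericity of the points, possibly after perturbing the approximating family.
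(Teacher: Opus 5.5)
Your first paragraph is correct and matches how the paper treats this statement: the paper quotes it from \cite{BB19} as an immediate weakening of Theorem~\ref{ba}, since $\Slip{r}{X}$ is by definition a closed subset of $\Hilb_{S[X]}^{h_{r,X}}$, whose closed points are exactly the $\mathbb{Z}^3$-homogeneous ideals $I$ with $H_{S[X]/I}=h_{r,X}$, and the $\mathbb{B}$-fixed version follows in the same way from the second part of Theorem~\ref{ba}. Your optional direct route (limits of ideals of $r$ general points, closedness of the condition $I\subseteq\Ann(T)$, then the Borel fixed point theorem) just unwinds the easy direction of Theorem~\ref{ba}; it is sound as a sketch but not needed here.
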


For a given tensor $T$ and positive integer $r$, a $\mathbb{Z}^3$-homogeneous ideal $I\subseteq\Ann(T)$ satisfies the numerical condition (i) alluded to in the introduction if and only if its Hilbert function is $h_{r,X}$, i.e., $[I]$ is a point if $\Hilb_{S[X]}^{h_{r,X}}$. It satisfies the topological condition (ii) if and only if it belongs to $\mathrm{Slip}_{r,X}$.

\subsection{Hilbert function tests}\label{sub:HFtests}
We recall the terminology of Hilbert function tests introduced in \cite{chl19}. Let $T\in A\otimes B\otimes C$ be a concise tensor, $X=\PP A\times \PP B\times \PP C$ and $r$ be a positive integer.

Given $I_{110}\subseteq S[X]_{110}$ of codimension $r$ contained in $\Ann(T)_{110}$ the \emph{$(210)$-map} is the multiplication map
$I_{110}\otimes A^\vee \to S^2A^\vee\otimes B^\vee = S[X]_{210}$. The \emph{$(210)$ test is passed} if the codimension of the image of the $(210)$-map is at least $r$.
This is a necessary condition for the existence of $I$ with Hilbert function $h_{r,X}$ with this choice of $110$ space.

Given $I_{110}\subseteq S[X]_{110}, I_{101}\subseteq S[X]_{101}, I_{011}\subseteq S[X]_{011}$ each of codimension $r$ the \emph{$(111)$-map} is the multiplication map
\[
(I_{011}\otimes A^\vee) \oplus (I_{101}\otimes B^\vee) \oplus (I_{110}\otimes C^\vee) \to A^\vee\otimes B^\vee\otimes C^\vee = S[X]_{111}.
\]
The \emph{$(111)$ test is passed} if the codimension of the image of the $(111)$-map is at least $r$.
This is a necessary condition for the existence of $I$ with Hilbert function $h_{r,X}$ with the given $011, 101$ and $110$ spaces.
A triple $(I_{110}, I_{101}, I_{011})$ passing the $(111)$ test and all $6$ variants of the $(210)$ tests obtained by permuting $A,B$ and $C$ is called a \emph{candidate triple}.

Given a candidate triple, by adding $I_{200}\subseteq S[X]_{200}$ of codimension $r$ we can strengthen the $(210)$ test. Namely, the dimension of the cokernel of the multiplication map
\begin{equation}\label{q:FULL210}
I_{110}\otimes A^\vee \oplus I_{200}\otimes B^\vee \to S^2A^\vee \otimes B^\vee = S[X]_{210}
\end{equation}
needs to be at least $r$. Given a candidate triple $(I_{110}, I_{101}, I_{011})$ and a triple $(I_{200}, I_{020},I_{002})$  where each $I_{\mathbf{u}} \subseteq S[X]_\mathbf{u}$ with $\mathbf{u}\in \{(2,0,0), (0,2,0), (0,0,2)\}$ is either trivial or of codimension $r$, we say that \emph{all total degree $3$ tests are passed} if the cokernels of the six maps obtained from \eqref{q:FULL210} by permuting $A,B,C$ are all of dimension at least $r$. If $I_{200} = 0$, then \eqref{q:FULL210} is just the $(210)$-map so this map has no extra information. However, typically the ideals are extended one degree at a time so it is convenient to initially allow some of the $I_{200}, I_{020},I_{002}$ to be zero as this might be enough to get an elimination.

\section{Tangent space test}
In this section we discuss a special case of the tangent space test presented in \cite[Proposition~5.2]{Man23}. We focus on the case of $X=\mathbb{P}^{n_1}\times \mathbb{P}^{n_2}\times \mathbb{P}^{n_3}$ and 
\begin{equation}\label{eq:ass_for_ts}
r \leq \min \{(n_1+1)(n_2+1), (n_1+1)(n_3+1), (n_2+1)(n_3+1)\}.    
\end{equation}
For the convenience of the reader, we present a short proof in this special case. Note that \eqref{eq:ass_for_ts} is not restrictive as the right hand side is an upper bound on the border rank of any tensor in $\CC^{n_1+1}\otimes \CC^{n_2+1}\otimes \CC^{n_3+1}$.

The following lemma concerns well-known properties of Hilbert functions of saturated ideals of $S[X]$. Note that by the definition of $h_{r,X}$, every ideal in $\Hilb_{S[X]}^{h_{r,X}}$ trivially satisfies (a) and (b) below. The lemma is known to hold when $J$ is an ideal of distinct points \cite[Proposition~3.5]{Tuy02}. The $i$-th standard basis vector of $\mathbb{Z}^3$ is denoted $\mathbf{e}_i$. 

\begin{lemma}{\label{lem:properties_of_HF}}
    Let $X = \PP^{n_1}\times \PP^{n_2} \times \mathbb{P}^{n_3}$ and let $J$ be a $B_X$-saturated ideal in $S[X]$.
    \begin{enumerate}[label = (\alph*)]
        \item  $\HF{S[X]/J}{\vv + \ee_i} \geq  \HF{S[X]/J}{\vv}$ for all $1\leq i \leq 3$ and all $\vv\in \ZZ^3$. 
        \item If $\HF{S[X]/J}{\vv+ \ee_i} =  \HF{S[X]/J}{\vv}$ for some $1\leq i \leq 3$ and $\vv\in \ZZ^3$, then $\HF{S[X]/J}{\vv + 2\ee_i} =  \HF{S[X]/J}{\vv+\ee_i}$.
    \end{enumerate}
\end{lemma}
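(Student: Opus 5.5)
The plan is to use multiplication by a general linear form $\ell \in S[X]_{\ee_i}$ and exploit saturation. Fix $i$ and write $S=S[X]$.

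\emph{Step 1: a nonzerodivisor.} Since $J$ is $B_X$-saturated, no associated prime of $S/J$ contains $B_X$. I claim that for a general $\ell\in S_{\ee_i}$, multiplication by $\ell$ is injective on $S/J$. The associated primes $P$ of $S/J$ are $\mathbb{Z}^3$-homogeneous. If $P\supseteq S_{\ee_i}$, then $P$ contains the ideal generated by all variables of the $i$-th factor. That ideal contains $B_X$, so $P\supseteq B_X$, which saturation forbids. Hence each $P_{\ee_i}$ is a proper subspace of $S_{\ee_i}$. There are finitely many associated primes, so a general $\ell$ avoids all of them and is a nonzerodivisor on $S/J$. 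Injectivity of $\cdot\ell\colon (S/J)_{\vv}\to (S/J)_{\vv+\ee_i}$ then gives (a).

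\emph{Step 2: reduction for (b).} Put $R=S/(J+(\ell))$. From the exact sequence $0\to (S/J)(-\ee_i)\xrightarrow{\ell} S/J\to R\to 0$ we get
\[
H_R(\mathbf{w}) = \HF{S/J}{\mathbf{w}} - \HF{S/J}{\mathbf{w}-\ee_i}.
\]
So the hypothesis in (b) says $R_{\vv+\ee_i}=0$, and the conclusion is equivalent to $R_{\vv+2\ee_i}=0$.

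\emph{Step 3: propagation.} Since $R$ is a quotient of $S$, it is generated by $R_{\mathbf{0}}$ as an $S$-module. Hence $R_{\vv+2\ee_i} = S_{\ee_i}\cdot R_{\vv+\ee_i}$, provided $\vv+\ee_i$ has nonnegative coordinates. If $\vv+\ee_i$ has a negative coordinate, then so does $\vv+2\ee_i$ unless the negative coordinate is the $i$-th one. That leaves only the case where the $i$-th coordinate of $\vv+\ee_i$ equals $-1$, so that $\vv+2\ee_i$ has $i$-th coordinate $0$. Here $\vv$ has $i$-th coordinate $-2$, and both $\HF{S/J}{\vv}$ and $\HF{S/J}{\vv+\ee_i}$ vanish, since $S$ is zero in degrees with a negative coordinate. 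The conclusion then asks that $\HF{S/J}{\vv+2\ee_i}=0$, which can fail: for example, if $J$ is the ideal of a point, then $\HF{S/J}{\mathbf{0}}=1$. So (b) should be read for $\vv\in\NN^3$, or more generally whenever $\vv+\ee_i$ lies in $\NN^3$. This is the only real subtlety I expect, and I would state this restriction explicitly.

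In the nonnegative case, $R_{\vv+\ee_i}=0$ implies $R_{\vv+2\ee_i}=0$ immediately, which proves (b). The main obstacle is Step 1, namely the correct use of saturation to produce a multigraded nonzerodivisor of degree $\ee_i$. This is a standard prime-avoidance argument over the infinite field $\CC$, applied degreewise in $S_{\ee_i}$.
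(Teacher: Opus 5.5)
Your argument is correct and is essentially the paper's proof. Saturation gives a nonzerodivisor $\ell\in S[X]_{\ee_i}$ on $S[X]/J$; the paper quotes this from the literature, whereas you derive it by graded prime avoidance. Both parts then follow from the exact sequence $0\to S[X]/J\xrightarrow{\cdot\ell}S[X]/J\to S[X]/(J+(\ell))\to 0$. Your caveat is also valid: as literally stated for all $\vv\in\ZZ^3$, part (b) fails, e.g.\ for $\vv=-2\ee_i$ and $J$ the ideal of a point. So it should be restricted to $\vv+\ee_i\in\NN^3$, which covers every use of the lemma in the proof of Lemma~\ref{lem:is_injective}.
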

\begin{proof}
Since $J$ is $B_X$-saturated, it follows from \cite[Proposition~3.1]{MS04} that for every $i$ there is a nonzerodivisor $\ell_i$ on $S[X]/J$ of degree $\mathbf{e}_i$.
Now, as in the proof of \cite[Proposition~3.5]{Tuy02}, both parts follow from the exact sequence $0 \to S[X]/J \xrightarrow{\cdot \ell_i} S[X]/J \to S[X]/(J+(\ell_i)) \to 0$.
\end{proof}

\begin{lemma}\label{lem:is_injective}
    Let $X=\mathbb{P}^{n_1}\times \mathbb{P}^{n_2}\times \mathbb{P}^{n_3}$ and $r$ be a positive integer satisfying \eqref{eq:ass_for_ts}. Assume that $[I] \in \Hilb_{S[X]}^{h_{r,X}}$ is a $B_X$-saturated ideal. Let 
    \begin{equation}\label{eq:E}
    \mathcal{E} = \{(0,0,0), (1,0,0), (0,1,0), (0,0,1), (1,1,0), (1,0,1), (0,1,1), (1,1,1)\}.
    \end{equation}
    If $I_\mathcal{E} = \bigoplus_{\vv\in \mathcal{E}}I_{\vv}$, then $(I_\mathcal{E})^{\sat} = I$.
\end{lemma}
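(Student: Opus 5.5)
Set $J = (I_\mathcal{E})$. The plan is to show that $J^{\sat}$ and $I$ agree in every degree $\vv \ge (1,1,1)$ (componentwise). Two saturated ideals that agree in all sufficiently large degrees are equal, since saturation with respect to $B_X$ only depends on high-degree components. Because $I$ is saturated by assumption, this gives $J^{\sat} = I$.

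First I compare $J^{\sat}$ with $I$ on the degrees in $\mathcal{E}$. Since $J \subseteq I$ and $I = I^{\sat}$, we get $J^{\sat} \subseteq I$. For $\vv \in \mathcal{E}$ we have $I_\vv \subseteq J_\vv \subseteq J^{\sat}_\vv \subseteq I_\vv$, so $J^{\sat}_\vv = I_\vv$. In particular $H_{S[X]/J^{\sat}}(\vv) = h_{r,X}(\vv)$ for $\vv \in \mathcal{E}$. By \eqref{eq:ass_for_ts}, and because $\dim S[X]_{111} \ge \dim S[X]_{110}$ and similarly for the other two, this equals $r$ at $(1,1,0)$, $(1,0,1)$, $(0,1,1)$ and $(1,1,1)$.

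Next I show that the Hilbert function of $J^{\sat}$ is stable, equal to $r$, in all degrees $\ge (1,1,1)$. Lemma~\ref{lem:properties_of_HF}(b) alone only propagates equality along a single fixed line, so I will use the underlying argument from its proof rather than its statement. As there, by \cite[Proposition~3.1]{MS04} there is a nonzerodivisor $\ell_3$ of degree $\ee_3$ on $S[X]/J^{\sat}$. Let $M = S[X]/(J^{\sat} + (\ell_3))$. The exact sequence shows
\[
\dim M_{111} = H_{S[X]/J^{\sat}}(1,1,1) - H_{S[X]/J^{\sat}}(1,1,0) = r - r = 0.
\]
Since $M$ is a cyclic $S[X]$-module generated in degree $\oo$, we have $M_{\vv} = S[X]_{\vv-(1,1,1)} \cdot M_{111}$ for every $\vv \ge (1,1,1)$. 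Hence $M_\vv = 0$ for all $\vv \ge (1,1,1)$. That means $H_{S[X]/J^{\sat}}(\vv + \ee_3) = H_{S[X]/J^{\sat}}(\vv)$ for all $\vv \ge (1,1,0)$. The same argument with $\ell_1$ and $\ell_2$, using the values at $(0,1,1)$ and $(1,0,1)$, gives stability in the first and second directions. Starting from $H_{S[X]/J^{\sat}}(1,1,1) = r$ and moving one coordinate at a time, we get $H_{S[X]/J^{\sat}}(\vv) = r = h_{r,X}(\vv)$ for all $\vv \ge (1,1,1)$. Here $h_{r,X}(\vv) = r$ in this range because of \eqref{eq:ass_for_ts}.

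Finally, $J^{\sat}_\vv \subseteq I_\vv$ and both have codimension $r$ in $S[X]_\vv$ for all $\vv \ge (1,1,1)$, so $J^{\sat}_\vv = I_\vv$ in that range. Now let $f \in I$ be homogeneous. Then $f \cdot B_X^k$ lies in degrees $\ge (1,1,1)$ for $k \ge 1$, so $f \cdot B_X \subseteq J^{\sat}$. Saturatedness of $J^{\sat}$ then gives $f \in J^{\sat}$. Hence $I = J^{\sat} = (I_\mathcal{E})^{\sat}$.

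The main obstacle is the second step. It requires noticing that equality of Hilbert function values at a single pair of degrees forces equality in a whole orthant, which comes from the cyclic-module generation of the quotient by a nonzerodivisor rather than from Lemma~\ref{lem:properties_of_HF} as stated. The remaining steps are formal.
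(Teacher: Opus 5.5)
Your proof is correct, and it differs from the paper's in two places.

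The paper also sets $K=(I_\mathcal{E})$ and observes that $K^{\mathrm{sat}}\subseteq I$, with equality in the degrees of $\mathcal{E}$. It then propagates the value $r$ to the orthant by chaining both parts of Lemma~\ref{lem:properties_of_HF}:
\begin{itemize}
\item part (b) along the line $(1,1,c)$;
\item the monotonicity squeeze $r=H(1,0,1)\le H(1,0,c)\le H(1,1,c)=r$ from part (a), followed by (b) in the second direction;
\item the same two steps once more in the first direction.
\end{itemize}
So your remark that the lemma as stated only propagates along a single line is too pessimistic: together with (a) it does cover the full orthant. Having shown that the multigraded Hilbert polynomial of $S[X]/K^{\mathrm{sat}}$ is $r$, the paper invokes, without proof, that $h_{r,X}$ is the largest Hilbert function of a $B_X$-saturated ideal with Hilbert polynomial $r$. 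This gives $H_{S[X]/K^{\mathrm{sat}}}\le h_{r,X}=H_{S[X]/I}$, and the containment $K^{\mathrm{sat}}\subseteq I$ then forces equality.

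You instead obtain stability in the whole orthant in one step, from the vanishing of $(S[X]/(J^{\mathrm{sat}}+(\ell_i)))_{111}$ and cyclic generation. You then compare $J^{\mathrm{sat}}$ and $I$ only in degrees $\ge(1,1,1)$, and finish with the elementary fact that a saturated ideal is determined by those components: $fB_X\subseteq J^{\mathrm{sat}}$ implies $f\in J^{\mathrm{sat}}$.

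Your route avoids the maximality property of $h_{r,X}$ among saturated ideals, which the paper asserts without proof, so your argument is more self-contained. The paper's route has the advantage of reusing Lemma~\ref{lem:properties_of_HF} exactly as stated and phrasing the conclusion through Hilbert polynomials.
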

\begin{proof}
Let $K = (I_\mathcal{E})$. In order to show that $K^{\sat} = I$ we prove that $K^{\sat}\subseteq I$ and that the multigraded Hilbert polynomial of $S[X]/K^{\sat}$ is $r$. This implies that $K^{\sat} = I$ since $h_{r,X}$ is the largest possible multigraded Hilbert function of a $B_X$-saturated ideal with multigraded Hilbert polynomial $r$. 

It follows from $K\subseteq I$ that $K^{\sat} \subseteq I^{\sat} = I$.
On the other hand, $(K^{\sat})_\vv \supseteq K_\vv = I_\vv$ for all $\vv\in \mathcal{E}$. As a result,
\[
(K^{\sat})_\vv = I_\vv
\]
holds for all $\vv \in \mathcal{E}$.
We repeatedly apply Lemma~\ref{lem:properties_of_HF} with $J = K^{\sat}$.
First, from 
\[
\HF{S[X]/K^{\sat}}{1,1,0} = \HF{S[X]/K^{\sat}}{1,1,1} = r
\]
we conclude that $\HF{S[X]/K^{\sat}}{1,1,c} = r$ for all $c\geq 0$. Therefore, for any $c\geq 1$ one gets
\[
r = \HF{S[X]/K^{\sat}}{1,0,1} \leq \HF{S[X]/K^{\sat}}{1,0,c} \leq \HF{S[X]/K^{\sat}}{1,1,c} = r.
\]
As a result, $\HF{S[X]/K^{\sat}}{1,b,c} = r$ for all $b,c\geq 1$.
Finally, for any $b,c\geq 1$, it follows from
\[
r = \HF{S[X]/K^{\sat}}{0,1,1} \leq \HF{S[X]/K^{\sat}}{0,b,c} \leq \HF{S[X]/K^{\sat}}{1,b,c} = r
\]
that $\HF{S[X]/K^{\sat}}{a,b,c} = r$ for all $a,b,c\geq 1$.
This proves that the multigraded Hilbert polynomial of $S[X]/K^{\sat}$ is $r$ and finishes the proof.
\end{proof}

\begin{proposition}\label{prop:ts_test}
Let $X=\mathbb{P}^{n_1}\times \mathbb{P}^{n_2}\times \mathbb{P}^{n_3}$ and $r$ be a positive integer satisfying \eqref{eq:ass_for_ts}. Let $[I]\in \Hilb_{S[X]}^{h_{r,X}}$ and let $K = I + (S[X]_{100})^s + (S[X]_{010})^t + (S[X]_{001})^u$ for positive integers $s,t,u$. If $[I]\in \Slip{r}{X}$ and $s,t,u\ge 2$, then
\[
\dim_\CC \Hom_{S[X]}(K, S[X]/K)_\oo\geq \dim \Slip{r}{X} = r(n_1+n_2+n_3).
\]
\end{proposition}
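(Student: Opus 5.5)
The plan is to build a morphism $\phi$ from a neighbourhood of $[I]$ in $\Slip{r}{X}$ to another multigraded Hilbert scheme and then use upper semicontinuity of tangent space dimension. Let $h'$ be the Hilbert function of $S[X]/K$. Since $K$ contains all monomials of multidegree $(a,b,c)$ with $a\ge s$, $b\ge t$ or $c\ge u$, the function $h'$ is supported in the finite box $[0,s-1]\times[0,t-1]\times[0,u-1]$. There $K_\vv = I_\vv + (\text{power terms})_\vv$, and for $\vv$ in the box the power terms vanish, so $K_\vv = I_\vv$. Hence $h'(\vv)=h_{r,X}(\vv)$ on the box and $0$ outside, and $h'$ depends only on $h_{r,X}$. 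The map $J\mapsto J + (S[X]_{100})^s + (S[X]_{010})^t + (S[X]_{001})^u$ therefore gives a morphism $\phi\colon \Hilb_{S[X]}^{h_{r,X}}\to \Hilb_{S[X]}^{h'}$. To see it is a morphism, note that on any flat family the truncation to the box stays locally free of the right rank. One can phrase this through the functor of points: for a family $J_R$ over $R$, the module $(J_R)_\vv$ is a locally free direct summand for $\vv$ in the box, and is everything outside.

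The key step is that $\phi$ is injective on the dense open set $U\subseteq\Slip{r}{X}$ of radical $B_X$-saturated ideals of $r$ points, and in fact injective on tangent vectors there. Because $s,t,u\ge 2$, the box contains $\mathcal{E}$ from \eqref{eq:E}. Thus $\phi(J)$ determines $J_\mathcal{E}$, and by Lemma~\ref{lem:is_injective} we recover $J=(J_\mathcal{E})^{\sat}$. Here I use \eqref{eq:ass_for_ts}, which ensures that saturated ideals in $\Hilb^{h_{r,X}}$ are covered by that lemma. So $\phi|_U$ is injective. It follows that $\dim\overline{\phi(\Slip{r}{X})}=\dim\Slip{r}{X}$, and $\dim \Slip{r}{X} = r(n_1+n_2+n_3)$, as it is birational to $r$ unordered points of $X$.

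Next, $Z=\overline{\phi(\Slip{r}{X})}$ is an irreducible closed subscheme of $\Hilb^{h'}_{S[X]}$ containing $\phi([I])$, and $\phi([I])=[K]$. The tangent space to $\Hilb^{h'}_{S[X]}$ at $[K]$ is $\Hom_{S[X]}(K,S[X]/K)_\oo$ by \cite{HS04}. Every point of an irreducible variety of dimension $d$ has Zariski tangent space of dimension at least $d$, and the tangent space of the ambient scheme contains that of $Z$. This gives $\dim_\CC \Hom_{S[X]}(K,S[X]/K)_\oo\ge \dim Z = r(n_1+n_2+n_3)$.

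The main obstacle is checking carefully that $\phi$ is a well-defined morphism of schemes and that $h'$ is constant across the family; both follow from $K_\vv=I_\vv$ inside the box. A second point needing care is that Lemma~\ref{lem:is_injective} applies to all points of $U$, so that generic injectivity, and hence preservation of dimension, holds. Only generic finiteness of $\phi$ is actually needed, so injectivity on $U$ suffices.
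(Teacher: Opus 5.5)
Your proposal is correct and takes essentially the same route as the paper. The paper also sends $[I]\mapsto[K]$ into $\Hilb_{S[X]}^{g_{s,t,u}}$ and gets injectivity on saturated ideals from Lemma~\ref{lem:is_injective}; it phrases this by composing with the further truncation to $g_{2,2,2}$, rather than noting directly that the box contains $\mathcal{E}$. It then concludes via the dimension of the image and the identification of the tangent space with $\Hom_{S[X]}(K,S[X]/K)_\oo$ from \cite{HS04}. Your remarks on why the map is a morphism and why generic injectivity preserves dimension fill in details the paper leaves implicit.
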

\begin{proof}
    For non-negative integers $a,b,c$ let $g_{a,b,c}$ be the Hilbert function of 
    \[
    S[X]/\big(I+(S[X]_{100})^a + (S[X]_{010})^b + (S[X]_{001})^c\big).
    \]
    Let $\pi\colon \Hilb_{S[X]}^{h_{r,X}} \to \Hilb_{S[X]}^{g_{s,t,u}}$ and $\theta\colon \Hilb_{S[X]}^{g_{s,t,u}} \to \Hilb_{S[X]}^{g_{2,2,2}}$
    be the natural morphisms given on closed points by $[I'] \mapsto [I'+(S[X]_{100})^s + (S[X]_{010})^t + (S[X]_{001})^u]$ and $[I']\mapsto [I'+(S[X]_{100})^2+(S[X]_{010})^2 + (S[X]_{001})^2]$, respectively. It follows from Lemma~\ref{lem:is_injective} that $\theta\circ \pi$ is injective on the set of $B_X$-saturated ideals. Hence so is $\pi$. Therefore, $\dim \pi(\Slip{r}{X}) = \dim \Slip{r}{X} = r\dim X = r(n_1+n_2+n_3)$. As a result, if $[I]\in \Slip{r}{X}$, then the tangent space to $\Hilb_{S[X]}^{g_{s,t,u}}$ at $[K] = \pi([I])$ has dimension at least $r(n_1+n_2+n_3)$. This tangent space is isomorphic to $\Hom_{S[X]}(K, S[X]/K)_\oo$ by \cite[Proposition~1.6]{HS04}. 
\end{proof}

\begin{remark}
In the language of \cite{Man23}, Lemma~\ref{lem:is_injective} proves that $\mathcal{E}$ as in \eqref{eq:E} is $(r,X)$-sufficient. Therefore, Proposition~\ref{prop:ts_test} is a special case of \cite[Proposition~5.2]{Man23} with $\mathcal{B} = \NN^3$ and $\mathcal{A} = \{\vv\in \NN^3 \mid v_1 \ge s \text{ or } v_2 \ge t \text{ or } v_3 \ge u\}$.
\end{remark}
 Using border apolarity (Theorem~\ref{ba}) and Proposition~\ref{prop:ts_test} we can compute the border rank of the Sch\"onhage triple sum  tensor \eqref{eq:SchonhageTensor} for $(u,v)=(1,2)$.

\begin{theorem}\label{thm:Scho}
    If $T=M_{\langle 1, 1, 4\rangle} \oplus M_{\langle 2,2,1\rangle} \oplus M_{\langle 2,2,1\rangle}  \in \CC^{9}\otimes \CC^{8}\otimes \CC^{8}$, then $\brk(T) =12$.
\end{theorem}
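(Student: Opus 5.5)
The upper bound $\brk(T)\le 12$ is Sch\"onhage's estimate $2uv+2v+4u+4$ specialized to $(u,v)=(1,2)$, which gives $4+4+4$. Alternatively, it follows from subadditivity: $\brk(M_{\langle 1,1,4\rangle})=4$ (it is a matrix of rank $4$), and each $M_{\langle 2,2,1\rangle}$ has border rank $4$. So the work is in the lower bound. I will show that $\brk(T)\le 11$ is impossible using Theorem~\ref{ba} with a Borel subgroup. First I choose a connected solvable group $\mathbb{B}$ stabilizing $[T]$. This will be a product of Borel subgroups of the stabilizers of the three summands: the torus and upper triangular parts of $GL_1\times GL_1\times GL_4$ acting on $M_{\langle 1,1,4\rangle}$, and of $GL_2^{\times 3}$ acting on each copy of $M_{\langle 2,2,1\rangle}$, together with the torus rescaling the summands. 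By Theorem~\ref{ba}, it suffices to show that no $\mathbb{B}$-fixed ideal $I\subseteq \Ann(T)$ has $[I]\in\Slip{11}{X}$, where $X=\PP^8\times\PP^7\times\PP^7$. Since $11\le 64$, assumption \eqref{eq:ass_for_ts} holds.

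Step one is the weak border apolarity enumeration. Following the algorithm of \cite{chl19}, I list all $\mathbb{B}$-fixed codimension-$11$ subspaces $I_{110},I_{101},I_{011}$ of $\Ann(T)$ in the respective degrees. I then keep only the candidate triples, that is, those passing the $(111)$ test and the six $(210)$-type tests. Next I extend each surviving triple by $\mathbb{B}$-fixed spaces $I_{200},I_{020},I_{002}$ of codimension $11$, and discard those failing the total degree $3$ tests; where needed I continue to the degrees in $\mathcal{E}$ and slightly beyond. This is computer-assisted. Since weak border apolarity gives only the bound $11$, I expect a small nonempty list of $\mathbb{B}$-fixed ideals with Hilbert function $h_{11,X}$, or at least of their low-degree parts, to survive.

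Step two applies Proposition~\ref{prop:ts_test} to each survivor, with $s=t=u=2$, the cheapest choice. Then $K=I+(S[X]_{100})^2+(S[X]_{010})^2+(S[X]_{001})^2$ depends only on $I_{\vv}$ for $\vv\in\{0,1\}^3$, the degrees in $\mathcal{E}$, so only the already enumerated low-degree data are needed. I would compute $\dim_\CC\Hom_{S[X]}(K,S[X]/K)_\oo$. This is a finite linear algebra problem: $S[X]/K$ lives in the degrees in $\mathcal{E}$, so a degree-$\oo$ homomorphism is determined by its values on the generators of $I$ in those degrees, subject to the syzygy compatibility. The goal is to show that this dimension is strictly less than $11\cdot(8+7+7)=242$ for every survivor. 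Then none lies in $\Slip{11}{X}$, so $\brk(T)\ge 12$.

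The main obstacle is twofold. The first part is that the list from step one may be parametrized by families rather than finitely many points, since the $\mathbb{B}$-fixed condition is weaker than torus-fixed on pieces with repeated weights. If so, I would compute the tangent dimension on the family generically and check the special members separately, or use the fact that the dimension is upper semicontinuous along the family. The second part is the risk that $s=t=u=2$ is too crude, with the tangent space reaching $242$. If that happens, I would increase to $s=3$ (the $A$ factor is the largest). This requires the $I_{2bc}$ parts, which is why the degree-$(2,1,0)$-type extensions are enumerated in step one.
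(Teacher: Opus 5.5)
Your proposal is correct and matches the paper's proof. The paper enumerates $\mathbb{B}_T$-fixed candidate triples $(I_{110},I_{101},I_{011})$ and finds exactly $12$. Each already generates a codimension-$11$ subspace of $S[X]_{111}$, so $K$ is determined by the triple and no $200$-type extensions are needed. A \textit{Macaulay2} check then gives $\max_i\dim_\CC\Hom_{S[X]}(K_i,S[X]/K_i)_\oo=215<242$.

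One slip: Sch\"onhage's formula $2uv+2v+4u+4$ at $(u,v)=(1,2)$ gives $16$, not $12$. The upper bound should therefore rest on the subadditivity argument you also give.
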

\begin{proof}
One has
\[
\brk(T) \le \brk(M_{\langle 1, 1, 4\rangle})+ \brk(M_{\langle 2,2,1\rangle}) + \brk( M_{\langle 2,2,1\rangle})  = 4 + 4 + 4 = 12.
\]
To obtain the matching lower bound we use border apolarity (Theorem~\ref{ba}). Let $X=\PP^8\times \PP^7\times \PP^7$. We proceed as in the algorithm from \cite[Section~3]{chl19}. That is we
\begin{enumerate}
    \item choose a connected solvable subgroup $\mathbb{B}_T$ of the stabilizer of $T$;
    \item enumerate all possible choices of triples of $\mathbb{B}_T$-fixed codimension $11$ subspaces of $S[X]_{110}$, $S[X]_{101}$, $S[X]_{011}$ contained in $\Ann(T)$;
    \item find all candidate triples, i.e.,  eliminate triples that fail one of the six $(210)$ tests or the $(111$) test (see Subsection~\ref{sub:HFtests}).
\end{enumerate}
All three steps are performed using the implementation of the above algorithm due to Austin Conner which is available at \url{https://github.com/adconner/bapolar}.
We find that there are exactly $12$ candidate triples. Let $\{I_1,\ldots, I_{12}\}$ be ideals generated by these $\mathbb{B}_T$-fixed triples. The degree $111$ part of each $I_i$ is of codimension $11$ in $S[X]_{111}$. Let $K_i = I_i + (S[X]_{100})^2 + (S[X]_{010})^2 + (S[X]_{001})^2$. 
If $\brk(T) \le 11$, then at least one $I_i$ extends to an ideal $[I]$ in $\Slip{11}{X}$. Since $I+(S[X]_{100})^2 + (S[X]_{010})^2 + (S[X]_{001})^2 = K_i$ for some $i=1,\ldots, 12$, it follows from Proposition~\ref{prop:ts_test} that $\max_i \dim_\CC \Hom_{S[X]}(K_i, S[X]/K_i)_\mathbf{0} \ge 11(8+7+7) = 242$. However, a verification in \textit{Macaulay2} \cite{M2} shows that this maximum is $215$.
 \end{proof}

\section{First application of a square test}

In this section we state the versions of the second test of the topological condition relevant for this paper and apply one of them to compute the border rank of $T_{\mathrm{Leit},{5}}\boxtimes W$. See Proposition~\ref{prop:st_general} for the general statement of the test.
We also state the dual versions of these two tests. These are used in the subsequent sections.

\begin{corollary}[Corollary~\ref{c:211test}]\label{c:211testsv}
    If $X=\PP^n\times \PP^n\times\PP^n$ for $n\geq 3$ and $r$ is a positive integer such that $(3n+1)r \leq \dim_\CC S[X]_{111} = (n+1)^3$, then 
    \begin{equation}\label{ineq:211}
    \HF{S[X]/I^2}{2,1,1} \geq  (3n+1)r    
    \end{equation}
	for all $[I]\in \Slip{r}{X}$.
\end{corollary}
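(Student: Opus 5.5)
The plan is to combine semicontinuity on $\Slip{r}{X}$ with a nondefectivity statement for $r$ general points. Afterwards I would record this as an instance of Proposition~\ref{prop:st_general}, which I would prove first in the general "fiber dimension of a coherent sheaf" form.

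\emph{Step 1 (semicontinuity).} Over $\Hilb_{S[X]}^{h_{r,X}}$, for every $\vv$ the graded pieces $I_\vv$ form a vector bundle. This is the universal family, and its rank is $\dim S[X]_\vv - h_{r,X}(\vv)$. The degree $(2,1,1)$ part of $I^2$ is the image of the multiplication map
\[
\bigoplus_{\mathbf{u}+\mathbf{w}=(2,1,1)} I_{\mathbf{u}}\otimes I_{\mathbf{w}} \longrightarrow S[X]_{211},
\]
which is a morphism of vector bundles into a trivial bundle. Its rank is lower semicontinuous, so $[I]\mapsto \dim (I^2)_{211}$ can only drop under specialization. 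Equivalently, $\HF{S[X]/I^2}{2,1,1}$ is the fiber dimension of the cokernel sheaf, and it is upper semicontinuous. Since $\Slip{r}{X}$ is the closure of the locus of radical saturated ideals, it is enough to prove \eqref{ineq:211} for $I$ the ideal of $r$ general points $p_1,\ldots,p_r\in X$. I expect general points to have Hilbert function $h_{r,X}$; this is the open condition defining that locus.

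\emph{Step 2 (reduction to double points).} For such $I$ we have $I^2\subseteq (I^2)^{\sat}\subseteq I^{(2)}$, where $I^{(2)}$ consists of forms singular at every $p_i$. Hence $\HF{S[X]/I^2}{2,1,1}\ge \operatorname{codim} I^{(2)}_{211}$. Each double point imposes at most $1+\dim X = 3n+1$ conditions on forms of degree $(2,1,1)$. So it suffices to show that $r$ general double points impose independent conditions on $S[X]_{211}$.

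\emph{Step 3 (nondefectivity; the main obstacle).} The hypothesis $(3n+1)r\le (n+1)^3$ together with $n\ge 3$ is exactly the range in which the secant variety $\sigma_r$ of the Segre variety $\PP^n\times\PP^n\times\PP^n$ is nondefective (Lickteig's theorem). By Terracini's lemma, this says that $r$ general double points impose independent conditions in degree $(1,1,1)$. I then lift this to degree $(2,1,1)$ by multiplying by linear forms.

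Independence means that for each $i$ and each functional $\lambda$ on the $(3n+1)$-dimensional space of $1$-jets at $p_i$, there is $G\in S[X]_{111}$ that is singular at all $p_j$ with $j\ne i$, satisfies $\lambda(j^1_{p_i}G)\neq 0$, and satisfies either $G(p_i)=0$ or not, as needed. Take a linear form $\alpha\in A^\vee$ with $\alpha(p_i)\ne 0$. Then $\alpha G$ is still singular at every $p_j$ with $j\ne i$. Its $1$-jet at $p_i$ equals $\alpha(p_i)\,j^1G + G(p_i)\,j^1\alpha$.

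I would check that these jets, as $G$ ranges over the witnesses, still span the full jet space at $p_i$. Witnesses with $G(p_i)=0$ already span the tangent directions. Adding the witness with $G(p_i)\ne 0$ gives the value coordinate; the extra $d\alpha$ term only affects the $\PP A$-directions, which are already spanned. I expect this step, namely obtaining independence rather than merely quoting nondefectivity in degree $(1,1,1)$, to need the most care. As an alternative, I would quote nondefectivity results for Segre–Veronese varieties in multidegree $(2,1,1)$.

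Combining Steps 1–3 gives $\HF{S[X]/I^2}{2,1,1}\ge (3n+1)r$ on a dense subset of $\Slip{r}{X}$, hence on all of it.
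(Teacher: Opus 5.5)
Your proposal is correct and is essentially the paper's argument, which is Proposition~\ref{prop:st_general} specialized to $d=(1,1,1)$ and $e=(2,1,1)$: upper semicontinuity of $\HF{S[X]/I^2}{2,1,1}$ on the Hilbert scheme, reduction to $r$ general points, the containment $I^2\subseteq I_Z$ for the double-point scheme $Z$, and Terracini plus Lickteig's nondefectivity in degree $(1,1,1)$. Your lift from $(1,1,1)$ to $(2,1,1)$ by multiplying with a linear form $\alpha$ satisfying $\alpha(p_i)\neq 0$ is the elementary form of the paper's Castelnuovo-exact-sequence step, and it is easier than you expect: $\alpha$ is a local unit at $p_i$, so multiplication by $\alpha$ is an isomorphism of $1$-jet spaces at $p_i$.
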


\begin{corollary}[Corollary~\ref{c:220test}]\label{c:220testsv}
    If $X=\PP^n\times \PP^n\times \PP^n$ and $r$ is a positive integer such that $(2n+1)r \leq \dim_\CC S[X]_{210} = \binom{n+2}{2}(n+1)$, then 
    \begin{equation}\label{ineq:220}
    \HF{S[X]/I^2}{2,2,0} \geq  (2n+1)r    
    \end{equation}
    for all $[I]\in \Slip{r}{X}.$
\end{corollary}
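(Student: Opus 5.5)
The plan is to prove the inequality first on the dense open subset of $\Slip{r}{X}$ consisting of radical $B_X$-saturated ideals of $r$ general points, and then extend it to all of $\Slip{r}{X}$ by semicontinuity. For the extension, consider the coherent sheaf on $\Hilb_{S[X]}^{h_{r,X}}$ whose fiber over $[I]$ is $(S[X]/I^2)_{220}$. Concretely, on the universal family the degree $(1,1,0)$ part $\mathcal{I}_{110}$ is a locally free subbundle of the trivial bundle $S[X]_{110}\otimes\OO$ of corank $r$. The degree $(2,2,0)$ part of $\mathcal{I}^2$ is the image of the multiplication map $\mathrm{Sym}^2\mathcal{I}_{110}\to S[X]_{220}\otimes \OO$; since $I_{110}$ generates the $(1,1,0)$-part and $(I^2)_{220}=I_{110}\cdot I_{110}$ (the only ways to split $(2,2,0)$ into two nonzero degrees are $(1,1,0)+(1,1,0)$, $(2,0,0)+(0,2,0)$, $(2,1,0)+(0,1,0)$ etc.; I will include all such products, each coming from a universal subbundle). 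Thus $(I^2)_{220}$ is the fiberwise image of a morphism of vector bundles. The rank of the image is lower semicontinuous, so $\HF{S[X]/I^2}{2,2,0}$ is upper semicontinuous. Hence the locus where it is $\ge (2n+1)r$ is closed, and it suffices to check the inequality on the dense set of ideals of $r$ distinct general points.

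For $r$ general points $p_1,\dots,p_r\in X$ with ideal $I$, note that $I^2\subseteq \bigcap_j \mathfrak{m}_{p_j}^2$ (the symbolic square). Therefore $\HF{S[X]/I^2}{2,2,0}\ge \HF{S[X]/I^{(2)}}{2,2,0}$, which is the dimension of the space of conditions imposed on $S[X]_{220}$ by the double points $2p_j$. The key point is that the $(2,2,0)$-part only sees the projection of $p_j$ to $\PP^n\times\PP^n$. A double point on that $2n$-dimensional variety imposes at most $2n+1$ conditions, and we need exactly that many independent conditions for $r$ points.

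So the main step is to show that $r$ general double points of $\PP^n\times\PP^n$ impose independent conditions on $|\OO(2,2)|$ whenever $(2n+1)r\le \dim S[X]_{220}$. The hypothesis as stated uses $\dim S[X]_{210}$, so I will track which bound is really needed. Independence of double points on $\PP^n\times \PP^n$ in bidegree $(2,2)$ is an Alexander--Hirschowitz-type statement, and known exceptions exist, e.g.\ $\PP^2\times\PP^2$ or $\PP^1\times\PP^1$ in bidegree $(2,2)$. This is where I expect the hypothesis $(2n+1)r\le\binom{n+2}{2}(n+1)$ to do its work.

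I would rather avoid the full classification by a cheap argument. By Terracini's lemma, the number of conditions imposed by $r$ general double points equals the dimension of the span of the tangent spaces, i.e.\ the dimension of the $r$-th secant variety of $\PP^n\times\PP^n$ embedded by $\OO(2,2)$, plus one. Alternatively, I would lower-bound it by producing a subspace: restrict attention to the $(2,1,0)$ forms multiplied by linear forms in $\beta$, using $r$ general double points in the $(2,1)$ embedding. There nondefectivity holds in the stated range by known results on Segre--Veronese varieties $\PP^n\times\PP^n$ with $\OO(2,1)$, which is exactly why $\dim S[X]_{210}$ appears. The expected obstacle is this nondefectivity input. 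I would cite the classification of defective Segre--Veronese $\OO(2,1)$ embeddings of $\PP^n\times\PP^n$ (Abo--Brambilla) and check that no exception falls in the range, or verify it by a degeneration or explicit computation of tangent-space spans.
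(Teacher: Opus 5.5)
Your proposal is correct and follows essentially the paper's proof, namely Proposition~\ref{prop:st_general} with $Y=\PP^n\times\PP^n$ and $(d_1,d_2)=(1,2)$ (equivalently $(2,1)$ by symmetry). Both arguments use upper semicontinuity of $\HF{S[X]/I^2}{2,2,0}$, the containment $I^2\subseteq I_Z$ for general points, Terracini plus Abo's non-defectivity of $\OO(1,2)$ on $\PP^n\times\PP^n$, and multiplication by a single general linear form in the $\beta$'s to pass from degree $(2,1)$ to $(2,2)$ (this is the paper's Castelnuovo-sequence step). The only real difference is that you run semicontinuity directly on $\Hilb_{S[X]}^{h_{r,X}}$ and project general points of $X$ to $Y$, which legitimately bypasses the paper's reduction $[I\cap S[Y]]\in\Slip{r}{Y}$ via \cite[Theorem~1.2]{Man23}.
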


\begin{remark}
The numbers on the right-hand sides of \eqref{ineq:211} and \eqref{ineq:220} are the expected dimensions of the affine cones of the $r$-th secant varieties of $\PP^n\times \PP^n\times \PP^n$ embedded by $\OO_{(\PP^n)^{\times 3}}(2,1,1)$ and $\PP^n\times \PP^n$ embedded by $\OO_{\PP^n\times \PP^n}(2,2)$, respectively.
\end{remark}

Assume that $X=\PP^n\times \PP^n\times \PP^n$. We say that an ideal $[I]\in \Hilb_{S[X]}^{h_{r,X}}$ \emph{passes the $(211)$ (resp. $(220)$) square test} if \eqref{ineq:211}  (resp. \eqref{ineq:220}) holds. Otherwise, we say that $[I]$ \emph{fails} the corresponding square test. More generally, if $K$ is a $\mathbb{Z}^3$ homogeneous ideal of $S[X]$ such that $\HF{S[X]/K}{\vv} \ge h_{r,X}(\vv)$ for all $\vv\in \mathbb{Z}^3$ and one of the inequalities does not hold, we say that $K$ \emph{fails} the corresponding square test. In this case, there is no ideal $[I']\in \mathrm{Slip}_{r,X}$ with $K \subseteq I'$. We define $(121),(112),(202)$ and $(022)$ square tests in a similar way.

Recall the definitions of $W$ from \eqref{eq:Wtensor} and $T_{\mathrm{Leit},{5}}$ from \eqref{eq:Leitner}.

\begin{theorem}\label{thm:leit}
$\brk(T_{\mathrm{Leit},{5}}\boxtimes W) = 12$.
\end{theorem}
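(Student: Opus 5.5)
The upper bound is immediate from submultiplicativity of border rank under the Kronecker product. We have $\brk(W)=2$. The border rank of $T_{\mathrm{Leit},5}$ is $6$: it is concise in $\CC^5\otimes\CC^5\otimes\CC^5$ but not of minimal border rank, since it fails the End-closed condition. The upper bound $6$ can be seen from an explicit degeneration or from \cite{Lei16}. Hence $\brk(T_{\mathrm{Leit},5}\boxtimes W)\le 12$. The tensor lives in $\CC^{10}\otimes\CC^{10}\otimes\CC^{10}$, so $X=\PP^9\times\PP^9\times\PP^9$ with $n=9$. For the lower bound we must exclude $r=11$ via border apolarity (Theorem~\ref{ba}). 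Note that $(3n+1)r=28\cdot 11=308\le 1000$ and $(2n+1)r=19\cdot 11=209\le \binom{11}{2}\cdot 10=550$, so both Corollary~\ref{c:211testsv} and Corollary~\ref{c:220testsv} are available.

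The first step is to fix a connected solvable subgroup $\mathbb{B}_T$ of the stabilizer of $T=T_{\mathrm{Leit},5}\boxtimes W$. The natural choice is the product of a torus (and any unipotent part) stabilizing $T_{\mathrm{Leit},5}$ with the Borel-type group stabilizing $W$, namely the upper triangular maps preserving the flag $\langle a_1\rangle\subset A$. Next, following the algorithm of \cite[Section~3]{chl19} via Conner's implementation, we enumerate the $\mathbb{B}_T$-fixed codimension $11$ subspaces of $\Ann(T)_{110},\Ann(T)_{101},\Ann(T)_{011}$ and retain the candidate triples. Since weak border apolarity does not suffice, we expect a nonempty, finite (or small-parameter) list of candidate triples. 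If needed, we also extend each candidate by $\mathbb{B}_T$-fixed codimension $11$ spaces $I_{200},I_{020},I_{002}\subseteq\Ann(T)$ and keep only those passing all total degree $3$ tests.

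For each surviving candidate, let $K$ be the ideal generated by the chosen low-degree pieces. Any $[I]\in\Slip{11}{X}$ with $I\subseteq\Ann(T)$ and $\mathbb{B}_T$-fixed would contain some such $K$, so $I^2\supseteq K^2$ and therefore $\HF{S[X]/I^2}{\vv}\le\HF{S[X]/K^2}{\vv}$. It therefore suffices to show that, for every candidate $K$, either $\HF{S[X]/K^2}{2,1,1}<308$ (or one of its permutations fails), or $\HF{S[X]/K^2}{2,2,0}<209$ (or one of its permutations fails). Computing $(K^2)_{211}$ needs only $K_{110},K_{101}$ (products of degree $110$ and $101$ pieces), and $(K^2)_{220}$ needs only $K_{110}$, so this is a single-degree linear algebra computation in Macaulay2. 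This yields a contradiction with Corollary~\ref{c:211testsv} or Corollary~\ref{c:220testsv}, so $\brk(T)\ge 12$.

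The main obstacle is the size of the candidate list. If the $\mathbb{B}_T$-fixed candidates come in positive-dimensional families rather than finitely many, the square test has to be verified uniformly on each family. For that I would first use the torus weights to split $(K^2)_{211}$ into weight spaces and bound ranks weight-by-weight, treating the family parameters symbolically. A further risk is that some candidate passes both square tests. In that case I would next try the tangent space test (Proposition~\ref{prop:ts_test}) on those remaining candidates.
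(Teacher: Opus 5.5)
Your plan is essentially the paper's proof. The paper runs Conner's implementation of the algorithm of \cite{chl19} on quadruples of $\mathbb{B}_T$-fixed codimension $11$ spaces in degrees $110,101,011,200$, finds exactly $16$ that pass all total degree $3$ tests, and eliminates every one of them with the $(211)$ square test of Corollary~\ref{c:211testsv}; for the upper bound it cites $\brk(T_{\mathrm{Leit},5})=6$ from \cite[Proposition~5.1]{LM17}. One correction: once a $200$ piece is present, $(K^2)_{211}=K_{110}K_{101}+K_{200}K_{011}$ (and likewise $(K^2)_{220}=K_{110}K_{110}+K_{200}K_{020}$), so you must keep the $K_{200}K_{011}$ term rather than computing only $K_{110}K_{101}$, because that term can be what makes the test fail, as it is in the proof of Theorem~\ref{thm:skew}.
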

\begin{proof}
    Since $\brk(T_{\mathrm{Leit},{5}})= 6$ (see \cite[Proposition~5.1]{LM17}) and $\brk(W) = 2$, it is enough to show that $\brk(T_{\mathrm{Leit},{5}}\boxtimes W) > 11$.
    We use border apolarity. Let $X = \PP^9\times \PP^9\times \PP^9$. 
    We proceed as in the algorithm recalled in the proof of Theorem~\ref{thm:Scho} with the following modifications:
    \begin{enumerate}
    \item In step (2) we enumerate all quadruples of $\mathbb{B}_T$-fixed codimension $11$ subspaces of $S[X]_{110}$, $S[X]_{101}$, $S[X]_{011}, S[X]_{200}$ contained in $\Ann(T_{\mathrm{Leit},{5}}\boxtimes W)$.
    \item In step (3) we perform all total degree $3$ tests (see Subsection~\ref{sub:HFtests}) on these quadruples.
    \end{enumerate}
There are exactly $16$ such quadruples passing all total degree $3$ tests. All of them fail the $(211)$ square test from Corollary~\ref{c:211testsv}.
\end{proof}

\subsection{Dual formulation of square tests}

In order to perform the $(211)$ and $(220)$ square tests in a convenient way we state them in the dual language of inverse systems. Similar approach was taken in \cite[Proposition~3.5]{chl19} for the $(210)$ and $(111)$ tests. We start with the $(211)$ square test. However, it is simpler to state a ``partial'' $(211)$ square test, namely compute the codimension of the image of the multiplication map $I_{110}\otimes I_{101} \to S[X]_{211}$. In the application below, we first bound the contribution of $I_{200}\cdot I_{011}$ towards the full $(211)$ square test and then perform the ``partial'' one.
Let $f^\wedge\colon A\otimes B\otimes A\otimes C \to \Lambda^2 A\otimes B\otimes C$ be the skew-symmetrization map on the $A$-factors tensored with the identity map on $B\otimes C$. 
Given a subspace $L$ of $A\otimes B\otimes A\otimes C$ the restriction of $f^\wedge$ to $L$ is denoted $f^\wedge_L$.

\begin{lemma}\label{lem:partial211ST}
    Let $I\subseteq S[X]$ be a $\mathbb{Z}^3$-homogeneous ideal. The codimension of the image of the multiplication map $I_{110}\otimes I_{101} \to S[X]_{211}$ is equal to:
    \begin{equation}\label{eq211ST}
    \dim_\CC \ker f^{\wedge}_{I_{110}^\perp\otimes A\otimes C  + A\otimes B \otimes I_{101}^\perp}.
    \end{equation}
    Let $f_{110} = f^{\wedge}_{I_{110}^\perp \otimes A\otimes C}$ and $f_{101} = f^\wedge_{A\otimes B \otimes I_{101}^\perp}$. Then \eqref{eq211ST} is equal to 
    \begin{equation}\label{eq211STver2}
    \dim_\CC \ker f_{110} + \dim_\CC \ker f_{101} + \dim_\CC (\mathrm{im} f_{110} \cap \mathrm{im} f_{101}) - \dim_\CC I_{110}^\perp \dim_\CC I_{101}^\perp.    
    \end{equation}
\end{lemma}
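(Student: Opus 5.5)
The plan is to dualize via Macaulay inverse systems. Write $S[X]_{211}=S^2A^\vee\otimes B^\vee\otimes C^\vee$, whose dual is $S^2A\otimes B\otimes C$ (in divided powers, canonically identified with symmetric tensors). The codimension of the image of $\mu\colon I_{110}\otimes I_{101}\to S[X]_{211}$ equals the dimension of its annihilator, namely
\[
\{F\in (S[X]_{211})^\vee : \langle \Theta\Theta', F\rangle =0 \text{ for all } \Theta\in I_{110},\ \Theta'\in I_{101}\}.
\]
The multiplication $S[X]_{110}\otimes S[X]_{101}\to S[X]_{211}$ is surjective, and its dual is the inclusion $\iota\colon S^2A\otimes B\otimes C\hookrightarrow A\otimes B\otimes A\otimes C$, obtained by polarizing the symmetric $A$-part with the factors in the right order. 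So the annihilator consists of those $F$ with $\iota(F)\in (I_{110}\otimes I_{101})^\perp$. Inside $(A\otimes B)\otimes(A\otimes C)$ we have
\[
(I_{110}\otimes I_{101})^\perp = I_{110}^\perp\otimes A\otimes C + A\otimes B\otimes I_{101}^\perp,
\]
by the standard identity $(U\otimes U')^\perp=U^\perp\otimes V'+V\otimes U'^\perp$. Hence the codimension equals $\dim\bigl(\iota(S^2A\otimes B\otimes C)\cap L\bigr)$, where $L$ is this sum.

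Next I identify $\iota(S^2A\otimes B\otimes C)$ with $\ker f^\wedge$. Decomposing $A\otimes A=S^2A\oplus\Lambda^2A$, the kernel of skew-symmetrization on the two $A$-factors, tensored with the identity on $B\otimes C$, is exactly the symmetric part. Therefore $\iota(S^2A\otimes B\otimes C)\cap L=\ker f^\wedge_L$, which gives \eqref{eq211ST}. The only care needed is to check that the pairing conventions of $\lrcorner$ for divided powers make $\iota$ the dual of multiplication. Up to harmless nonzero scalars this is a direct check on monomials, and scalars do not affect dimensions.

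For \eqref{eq211STver2}, set $L_1=I_{110}^\perp\otimes A\otimes C$ and $L_2=A\otimes B\otimes I_{101}^\perp$. Rank–nullity for $f^\wedge_{L_1+L_2}$ gives
\[
\dim\ker f^\wedge_{L_1+L_2}=\dim(L_1+L_2)-\dim f^\wedge(L_1+L_2),
\]
and $f^\wedge(L_1+L_2)=\mathrm{im} f_{110}+\mathrm{im} f_{101}$. Expanding both sums by inclusion–exclusion, and noting $L_1\cap L_2=I_{110}^\perp\otimes I_{101}^\perp$ of dimension $\dim I_{110}^\perp\dim I_{101}^\perp$, we get
\[
\dim L_1+\dim L_2-\dim L_1\cap L_2-\dim\mathrm{im} f_{110}-\dim\mathrm{im} f_{101}+\dim(\mathrm{im} f_{110}\cap\mathrm{im} f_{101}).
\]
Applying rank–nullity to $f_{110}$ and $f_{101}$ separately turns this into \eqref{eq211STver2}.

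The main obstacle is only bookkeeping. One must verify the intersection-of-tensor-products identity (choose bases adapted to $I_{110}^\perp\subseteq A\otimes B$ and $I_{101}^\perp\subseteq A\otimes C$), and confirm that the symmetrization convention matches the divided power pairing. Neither involves real difficulty.
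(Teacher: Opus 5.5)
Your proposal is correct and follows essentially the same route as the paper: you dualize the multiplication map, identify the annihilator of its image with $\ker f^\wedge\cap\bigl(I_{110}^\perp\otimes A\otimes C+A\otimes B\otimes I_{101}^\perp\bigr)$ using $\ker f^\wedge=S^2A\otimes B\otimes C$, and then get \eqref{eq211STver2} by inclusion--exclusion with $(I_{110}^\perp\otimes A\otimes C)\cap(A\otimes B\otimes I_{101}^\perp)=I_{110}^\perp\otimes I_{101}^\perp$. The only cosmetic difference is that the paper describes $(I_{110}\otimes I_{101})^\perp$ through a splitting $A\otimes B\cong I_{110}^\perp\oplus I_{110}^\vee$ (and similarly for $A\otimes C$), while you use the identity $(U\otimes U')^\perp=U^\perp\otimes V'+V\otimes U'^\perp$; these express the same fact.
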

\begin{proof}
  The fact that \eqref{eq211STver2} is equal to \eqref{eq211ST} follows from the fact that $(I_{110}^\perp \otimes A\otimes C)\cap ( A\otimes B \otimes I_{101}^\perp) = I_{110}^\perp \otimes I_{101}^\perp$. Let $d$ be the codimension of the image of $I_{110}\otimes I_{101} \to S[X]_{211}$. It is equal to the dimension of the kernel of the transposed map $
  \widetilde{S[X]}_{211} \to I_{110}^\vee\otimes I_{101}^\vee$.
  The tensor space $A\otimes B\otimes A\otimes C$ is isomorphic to both
  \[
  (I_{110}^\perp\oplus I_{110}^\vee) \otimes (I_{101}^\perp \oplus I_{101}^\vee)= [(I_{110}^\perp\otimes I_{101}^\perp) \oplus  (I_{110}^\perp \otimes I_{101}^\vee)\oplus (I_{110}^\vee\otimes I_{101}^\perp)]\oplus (I_{110}^\vee\otimes I_{101}^\vee)
  \]
  and
  \[
  (S^2 A\otimes B\otimes C)  \oplus (\Lambda^2A \otimes B\otimes C)= \widetilde{S[X]}_{211} \oplus (\Lambda^2 A\otimes B\otimes C),
  \]
and under these isomorphisms $\widetilde{S[X]}_{211} \to I_{110}^\vee\otimes I_{101}^\vee$ is the restriction of the projection of $A\otimes B\otimes A\otimes C$ onto $I_{110}^\vee\otimes I_{101}^\vee$. It follows that $d$ is equal to the dimension of the intersection of  $S^2A\otimes B\otimes C$ with 
  \[
  (I_{110}^\perp\otimes I_{101}^\perp) \oplus ( I_{110}^\perp\otimes I_{101}^\vee) \oplus ( I_{110}^\vee\otimes I_{101}^\perp) = I_{110}^\perp \otimes A\otimes C + A\otimes B \otimes I_{101}^\perp 
  \]
  and hence is equal to \eqref{eq211ST}.
\end{proof}

Now we describe a partial $(220)$ square test in the language of inverse systems. Given a subspace $L\subseteq A\otimes B$, the map $L\otimes A\otimes B \to \Lambda^2 A\otimes \Lambda^2 B$ obtained by restricting the domain of the projection map $A\otimes B\otimes A\otimes B \to \Lambda^2 A\otimes \Lambda^2 B$ is denoted $f^{\wedge \wedge}_L$

\begin{lemma}\label{lem:partial220ST}
    Let $I\subseteq S[X]$ be a $\mathbb{Z}^3$-homogeneous ideal. The codimension of the image of the multiplication map $I_{110}\otimes I_{110} \to S[X]_{220}$ is equal to 
    \begin{equation}\label{eq220ST}
    \dim_\CC \ker f^{\wedge \wedge}_{I_{110}^\perp} -\dim_\CC \Lambda^2 I_{110}^\perp.
    \end{equation}
\end{lemma}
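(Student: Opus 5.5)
The plan is to follow the proof of Lemma~\ref{lem:partial211ST}, replacing the symmetrization in the $A$-factor by a symmetrization in both the $A$- and the $B$-factor. Let $d$ be the codimension of the image of $I_{110}\otimes I_{110}\to S[X]_{220}$. Since multiplication is commutative, this map factors through $S^2 I_{110}$, so $d$ equals the dimension of the kernel of the transposed map
\[
\widetilde{S[X]}_{220}\to (I_{110}\otimes I_{110})^\vee = I_{110}^\vee\otimes I_{110}^\vee .
\]
Here $\widetilde{S[X]}_{220}=S^2A\otimes S^2B$, viewed inside $A\otimes B\otimes A\otimes B$ via $(A\otimes B)^{\otimes 2}$.

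First I will decompose $A\otimes B\otimes A\otimes B$ in two ways. Under the swap of the two $A\otimes B$ factors, it splits as
\[
(S^2A\otimes S^2B)\oplus(\Lambda^2A\otimes\Lambda^2B)\oplus(S^2A\otimes\Lambda^2B)\oplus(\Lambda^2A\otimes S^2B).
\]
The first two summands together form $S^2(A\otimes B)$, and the last two form $\Lambda^2(A\otimes B)$. Writing $A\otimes B=I_{110}^\perp\oplus I_{110}^\vee$ (choosing a complement identified with $I_{110}^\vee$), the transposed map is the restriction to $S^2A\otimes S^2B$ of the projection onto $I_{110}^\vee\otimes I_{110}^\vee$. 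Its kernel is therefore
\[
(S^2A\otimes S^2B)\cap N, \qquad N = I_{110}^\perp\otimes(A\otimes B)+(A\otimes B)\otimes I_{110}^\perp .
\]
So $d=\dim\bigl((S^2A\otimes S^2B)\cap N\bigr)$.

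Next I will relate this to the kernel of $f^{\wedge\wedge}_{I_{110}^\perp}$. The subspace $N$ is swap-invariant, so it splits as $N = N^{+}\oplus N^{-}$ with
\[
N^{+}=N\cap S^2(A\otimes B), \qquad N^{-}=N\cap\Lambda^2(A\otimes B).
\]
Because $N$ is swap-invariant, symmetrizing an element $x\otimes y$ with $x\in I_{110}^\perp$ lands in $N$. Hence $N^{+}$ is the image of $I_{110}^\perp\otimes A\otimes B$ under symmetrization $A\otimes B\otimes A\otimes B\to S^2(A\otimes B)$. The kernel of this symmetrization restricted to $I_{110}^\perp\otimes A\otimes B$ consists of the swap-antisymmetric tensors lying in $I_{110}^\perp\otimes A\otimes B$. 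Any such tensor also lies in $A\otimes B\otimes I_{110}^\perp$, so it lies in $I_{110}^\perp\otimes I_{110}^\perp$, and the kernel is exactly $\Lambda^2 I_{110}^\perp$. This gives
\[
\dim N^{+}=\dim(I_{110}^\perp\otimes A\otimes B)-\dim\Lambda^2I_{110}^\perp .
\]

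The remaining step is to compute $(S^2A\otimes S^2B)\cap N^{+}$, which is the kernel of the projection $p\colon N^{+}\to\Lambda^2A\otimes\Lambda^2B$. Let $\sigma$ denote the symmetrization map, so that $N^{+}=\sigma(I_{110}^\perp\otimes A\otimes B)$. On $S^2(A\otimes B)$, $\sigma$ acts as the identity (up to a factor of $2$), and the projection onto $\Lambda^2A\otimes\Lambda^2B$ kills $\Lambda^2(A\otimes B)$. Therefore
\[
p\circ\sigma = f^{\wedge\wedge}_{I_{110}^\perp}
\]
up to a nonzero scalar. It follows that
\[
\dim\ker f^{\wedge\wedge}_{I_{110}^\perp} = \dim\ker\sigma|_{I_{110}^\perp\otimes A\otimes B} + \dim\ker p = \dim\Lambda^2I_{110}^\perp + d,
\]
which is \eqref{eq220ST}.

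The main obstacle I expect is bookkeeping: identifying the kernel of $\sigma$ on $I_{110}^\perp\otimes A\otimes B$ with exactly $\Lambda^2I_{110}^\perp$, and checking that the identifications of the duals are compatible with the natural pairing between $S[X]_{220}$ and $S^2A\otimes S^2B$. Working over $\CC$ keeps the characteristic issues in the symmetrizations harmless.
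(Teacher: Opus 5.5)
Your proof is correct and follows essentially the paper's route: both reduce the codimension to $\dim\bigl((S^2A\otimes S^2B)\cap N\bigr)$ and then exploit the swap symmetry of $N$. The paper splits $\mathrm{im}(f)=\mathrm{im}(f_\wedge)\oplus\mathrm{im}(f_S)$ and counts dimensions using $\ker f_\wedge=S^2(I_{110}^\perp)$, whereas you split $N=N^{+}\oplus N^{-}$ and factor $f^{\wedge\wedge}_{I_{110}^\perp}=p\circ\sigma$ with $\ker\sigma=\Lambda^2 I_{110}^\perp$; one step you should still write out is the reverse inclusion $N^{+}\subseteq\sigma(I_{110}^\perp\otimes A\otimes B)$, which follows by writing $n=y+z\in N^{+}$ with $y\in I_{110}^\perp\otimes A\otimes B$, $z\in A\otimes B\otimes I_{110}^\perp$ and noting $n=\sigma(n)=\sigma(y+\tau z)$ for the swap $\tau$ (normalizing $\sigma=\tfrac12(1+\tau)$).
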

\begin{proof}
  Let $r=\dim_\CC I_{110}^\perp$ and $d$ be the codimension of the image of $I_{110}\otimes I_{110} \to S[X]_{220}$. Then, $d$ is equal to the dimension of the kernel of the transposed map $
  \widetilde{S[X]}_{220} \to I_{110}^\vee\otimes I_{110}^\vee$.
  The tensor space $A\otimes B\otimes A\otimes B$ is isomorphic to both
  \[
  (I_{110}^\perp\oplus I_{110}^\vee) \otimes (I_{110}^\perp \oplus I_{110}^\vee)= [(I_{110}^\perp\otimes I_{110}^\perp) \oplus  (I_{110}^\perp \otimes I_{110}^\vee)\oplus (I_{110}^\vee\otimes I_{110}^\perp)]\oplus (I_{110}^\vee\otimes I_{110}^\vee)
  \]
  and
  \begin{align*}
  &(S^2 A\otimes S^2 B) \oplus [(S^2 A\otimes \Lambda^2 B) \oplus (\Lambda^2 A\otimes S^2 B) \oplus (\Lambda^2 A\otimes \Lambda^2 B)] \\
  &= \widetilde{S[X]}_{220} \oplus [(S^2 A\otimes \Lambda^2 B) \oplus (\Lambda^2 A\otimes S^2 B) \oplus (\Lambda^2 A\otimes \Lambda^2 B)],
\end{align*}
  and under these isomorphisms $\widetilde{S[X]}_{220} \to I_{110}^\vee\otimes I_{110}^\vee$ is the restriction of the projection of $A\otimes B\otimes A\otimes B$ onto $I_{110}^\vee\otimes I_{110}^\vee$. It follows that $d$ is equal to the dimension of the intersection of  $S^2A\otimes S^2 B$ with 
  \[
  (I_{110}^\perp\otimes I_{110}^\perp) \oplus ( I_{110}^\perp\otimes I_{110}^\vee) \oplus ( I_{110}^\vee\otimes I_{110}^\perp) = I_{110}^\perp \otimes A\otimes B + A\otimes B \otimes I_{110}^\perp 
  \]
  and hence is equal to the dimension of the kernel of
  \[
  f\colon I_{110}^\perp \otimes A\otimes B + A\otimes B \otimes I_{110}^\perp \to (S^2 A\otimes \Lambda^2 B) \oplus (\Lambda^2 A\otimes S^2 B) \oplus (\Lambda^2 A\otimes \Lambda^2 B)
  \]
  obtained by embedding the source into $A\otimes B\otimes A\otimes B$ and projecting onto the three irreducible $\mathrm{GL}(A)\times \mathrm{GL}(B)$-submodules.
  Let $f_{\wedge}\colon I_{110}^\perp \otimes A\otimes B \to (S^2 A\otimes \Lambda^2 B) \oplus (\Lambda^2 A\otimes S^2 B)$ and $f_S\colon I_{110}^\perp \otimes A\otimes B \to \Lambda^2 A\otimes \Lambda^2 B$ be the natural maps obtained from $f$ by restricting the domain and projecting onto the direct summands of the codomain. 
  From the equality $\mathrm{im}(f) = \mathrm{im}(f_\wedge) \oplus \mathrm{im}(f_S)$ it follows that
  \begin{align}
  &\dim_\CC \ker f = \dim_\CC [I_{110}^\perp \otimes A\otimes B + A\otimes B \otimes I_{110}^\perp] - \dim_\CC \mathrm{im}(f_\wedge) - \dim_\CC \mathrm{im}(f_S)\notag \\
  &=[2\cdot \dim_\CC I_{110}^\perp \cdot \dim_\CC A\cdot  \dim_\CC B-(\dim_\CC I_{110}^\perp)^2] \notag\\
  &- [\dim_\CC I_{110}^\perp \cdot \dim_\CC A \cdot \dim_\CC B - \dim_\CC \ker (f_\wedge)]  \label{eq0}\\
  &- [\dim_\CC I_{110}^\perp \cdot \dim_\CC A \cdot \dim_\CC B - \dim_\CC \ker (f_S)] \notag\\
  &= \dim_\CC \ker (f_\wedge) + \dim_\CC \ker (f_S) - r^2. \notag 
\end{align}
 Since $I_{110}^\perp \otimes A\otimes B \subseteq A\otimes B\otimes A\otimes B$ and the kernel of $A\otimes B\otimes A\otimes B \to (S^2 A\otimes \Lambda^2 B) \oplus (\Lambda^2 A\otimes S^2 B)$ is equal to $S^2(A\otimes B)$, it follows that $\ker (f_\wedge) = S^2(I_{110}^\perp)$. In particular, its dimension is equal to $\frac{1}{2}r(r+1)$. Therefore, \eqref{eq220ST} follows from \eqref{eq0}.
\end{proof}

\begin{remark}
From the dual restatement of the partial $(220)$ square test in Lemma~\ref{lem:partial220ST} it can be seen that this test for a concise tensor in $(\CC^m)^{\otimes 3}$ and $r=m$ coincides with testing the rank of the Kronecker-Koszul flattening from \cite[Example~3.7]{DM26}.
\end{remark}

\section{Border rank of the direct sum of smallest skew-symmetric Coppersmith-Winograd tensors}

The next tensor for which we compute the border rank is the direct sum of two copies of $T_{\mathrm{skewcw},2}$. The family of tensors $\{T_{\mathrm{skewcw},q}\}_{q\in  2\cdot \mathbb{Z}_>0}$ was introduced in \cite{CGLV22} as skew-symmetric versions of the Coppersmith-Winograd tensors.
It follows from \cite[Theorem~1.6, Lemma~2.8 and Proposition~3.2]{CGLV22} that the border rank of the Kronecker square of $T_{\mathrm{skewcw},2}$ is strictly submultiplicative. Here we show that there is no subadditivity for border rank of the direct sum of two copies of $T_{\mathrm{skewcw},2}$. When enumerating $200$ spaces for an argument based on border apolarity we use the following simple observation.

\begin{lemma}\label{lem:inverseSystem}
    Let $X=\mathbb{P}^{n_1}\times \mathbb{P}^{n_2}\times \mathbb{P}^{n_3}$ and $I\subseteq S[X]$ be a $\mathbb{Z}^3$-homogeneous ideal.
    Let $\mathbf{u}\in  \mathbb{Z}^3$ and $i\in \{1,2,3\}$ be such that $I_{\mathbf{u} + \mathbf{e}_i} = S[X]_{\mathbf{e}_i}\cdot I_{\mathbf{u}}$. Then
    \[
    I_{\mathbf{u}+\mathbf{e}_i}^\perp = \{F \in \widetilde{S[X]}_{\mathbf{u}+\mathbf{e}_i} \mid S[X]_{\mathbf{e}_i}\lrcorner F \subseteq I_{\mathbf{u}}^\perp \}.
    \]
\end{lemma}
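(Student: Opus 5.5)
The plan is to unwind the definition of $I^\perp$ in degree $\mathbf{u}+\mathbf{e}_i$ and use the hypothesis $I_{\mathbf{u}+\mathbf{e}_i} = S[X]_{\mathbf{e}_i}\cdot I_{\mathbf{u}}$ to reduce the annihilation conditions to degree $\mathbf{u}$. The only tool needed is the standard compatibility of the contraction with multiplication: for $\Theta,\Theta'\in S[X]$ and $F\in\widetilde{S[X]}$ one has $(\Theta\Theta')\lrcorner F = \Theta\lrcorner(\Theta'\lrcorner F)$, and the contraction is commutative in the two factors because $S[X]$ is commutative. This is just the statement that $\widetilde{S[X]}$ is an $S[X]$-module, as noted when the module structure was introduced.

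First I record the degree-$\mathbf{u}+\mathbf{e}_i$ part of the inverse system. By definition,
\[
I_{\mathbf{u}+\mathbf{e}_i}^\perp = \{F\in \widetilde{S[X]}_{\mathbf{u}+\mathbf{e}_i} \mid \Theta\lrcorner F = 0 \text{ for all } \Theta\in I_{\mathbf{u}+\mathbf{e}_i}\}.
\]
Elements of $I$ of other degrees contract $F$ into other graded pieces, and those conditions are handled separately. Concretely, an element of degree $\mathbf{v}\neq \mathbf{u}+\mathbf{e}_i$ either sends $F$ to zero for degree reasons or yields an element of $I^\perp$ in lower degree; in any case these conditions are exactly what the notation $I^\perp_{\mathbf{u}+\mathbf{e}_i}$ means in the paper's convention, namely $((I_{\mathbf{u}+\mathbf{e}_i})^\perp)_{\mathbf{u}+\mathbf{e}_i}$. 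So only the elements of $I_{\mathbf{u}+\mathbf{e}_i}$ matter.

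Next, by hypothesis every element of $I_{\mathbf{u}+\mathbf{e}_i}$ is a finite sum $\sum_k \ell_k\Theta_k$ with $\ell_k\in S[X]_{\mathbf{e}_i}$ and $\Theta_k\in I_{\mathbf{u}}$. Since contraction is linear, $F\in I_{\mathbf{u}+\mathbf{e}_i}^\perp$ if and only if $(\ell\Theta)\lrcorner F=0$ for all $\ell\in S[X]_{\mathbf{e}_i}$ and all $\Theta\in I_{\mathbf{u}}$. Rewriting this as $\Theta\lrcorner(\ell\lrcorner F)=0$, the condition says precisely that for every $\ell$ the element $\ell\lrcorner F\in \widetilde{S[X]}_{\mathbf{u}}$ is annihilated by all of $I_{\mathbf{u}}$, i.e. $\ell\lrcorner F\in I_{\mathbf{u}}^\perp$. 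That is the claimed description $S[X]_{\mathbf{e}_i}\lrcorner F\subseteq I_{\mathbf{u}}^\perp$, and both inclusions come from the same chain of equivalences.

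I do not expect a real obstacle. The only point needing care is that $(\ell\Theta)\lrcorner F=\Theta\lrcorner(\ell\lrcorner F)$ holds for the divided-power contraction. I would check this on monomials: contracting by a product of variables is the same as successive contraction, since the operation simply divides the monomial when possible and gives zero otherwise. The result then extends by bilinearity.
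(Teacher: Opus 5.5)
Your proof is correct and follows the paper's argument. Both rely on $(\nu\theta)\lrcorner F=\theta\lrcorner(\nu\lrcorner F)$: the inclusion $\subseteq$ uses $S[X]_{\mathbf{e}_i}\cdot I_{\mathbf{u}}\subseteq I_{\mathbf{u}+\mathbf{e}_i}$, and the inclusion $\supseteq$ writes each element of $I_{\mathbf{u}+\mathbf{e}_i}$ as $\sum_k\nu_k\theta_k$ using the hypothesis; the paper simply states these two inclusions separately rather than as a single chain of equivalences.
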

\begin{proof}
    If $F\in I_{\mathbf{u}+\mathbf{e}_i}^\perp$, then $(\nu \theta)\lrcorner F = 0$ holds for every $\theta\in I_{\mathbf{u}}$ and for every  $\nu\in S[X]_{\mathbf{e}_i}$.
    Therefore, $\theta \lrcorner (\nu \lrcorner F) = 0$. Since this is true for every $\nu$ and $\theta$, it follows that $S[X]_{\mathbf{e}_i}\lrcorner F \subseteq I_{\mathbf{u}}^\perp$.

    Assume that $F\in \widetilde{S[X]}_{\mathbf{u}+\mathbf{e}_i}$ satisfies $S[X]_{\mathbf{e}_i}\lrcorner F \subseteq I_{\mathbf{u}}^\perp$ and let $\xi\in I_{\mathbf{u}+\mathbf{e}_i}$. By the assumptions it can be written as $\sum_{i=1}^k \nu_i \theta_i$ with $\nu_i\in S[X]_{\mathbf{e}_i}$ and $\theta_i \in I_\mathbf{u}$.
    Therefore,
    \[
    \xi \lrcorner F = \sum_{i=1}^k (\nu_i \theta_i)\lrcorner F = \sum_{i=1}^k \theta_i \lrcorner (\nu_i \lrcorner F) = 0.
    \]
\end{proof}

\begin{theorem}\label{thm:skew}
$\brk(T_{\mathrm{skewcw},2}^{\oplus 2}) = 10$, where $T_{\mathrm{skewcw},{2}}\in \Lambda^3 \CC^3 \subseteq \CC^3\otimes \CC^3\otimes \CC^3$ is the skew-symmetric Coppersmith-Winograd tensor. 
\end{theorem}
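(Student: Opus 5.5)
The upper bound is immediate from subadditivity: since $\brk(T_{\mathrm{skewcw},2})=5$ (see \cite{CGLV22}), we get $\brk(T_{\mathrm{skewcw},2}^{\oplus 2})\le 10$. For the lower bound we suppose $\brk\le 9$ and use border apolarity (Theorem~\ref{ba}) with $X=\PP^5\times\PP^5\times\PP^5$ and $r=9$. We proceed exactly as in the proofs of Theorem~\ref{thm:Scho} and Theorem~\ref{thm:leit}.

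First we fix a connected solvable subgroup $\mathbb{B}_T$ of the stabilizer of $T=T_{\mathrm{skewcw},2}^{\oplus 2}$. The natural choice is the product of Borel subgroups of the two copies of $\mathrm{SL}_3$ acting diagonally on $A=B=C=\CC^3\oplus\CC^3$, together with the torus rescaling the two summands. We then enumerate $\mathbb{B}_T$-fixed codimension $9$ subspaces $I_{110},I_{101},I_{011}\subseteq \Ann(T)$ and keep the candidate triples, i.e.\ those passing the $(111)$ test and the six $(210)$ tests. Weak border apolarity does not rule out $r=9$, so the candidate list is nonempty, but we expect it to be short.

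For each surviving triple we extend to degree $(2,0,0)$ by enumerating the $\mathbb{B}_T$-fixed codimension $9$ subspaces $I_{200}\subseteq S^2A^\vee$, and likewise in the other two directions. To make this enumeration tractable we use Lemma~\ref{lem:inverseSystem}: whenever the relevant degree is generated in lower degree, it describes the perp space $I_{200}^\perp$ or $I_{210}^\perp$ explicitly as $\{F : S[X]_{\mathbf{e}_i}\lrcorner F\subseteq I_{\mathbf{u}}^\perp\}$. We then discard every extension that fails one of the total degree $3$ tests.

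On what remains we apply the square tests. Since $n=5$, we have $(2n+1)r=99\le \binom{7}{2}\cdot 6=126$, so the $(220)$ square test of Corollary~\ref{c:220testsv} is available. We also have $(3n+1)r=144\le 216$, so the $(211)$ test of Corollary~\ref{c:211testsv} is available as well. The $(220)$ test requires $\HF{S[X]/I^2}{2,2,0}\ge 99$. We bound this from above by computing the codimension of $I_{110}\cdot I_{110}$ through Lemma~\ref{lem:partial220ST}, namely $\dim\ker f^{\wedge\wedge}_{I_{110}^\perp}-\binom{9}{2}$, and then subtracting the contribution of $I_{200}\cdot I_{020}$. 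If this already falls below $99$, the ideal is eliminated. Otherwise we run the partial $(211)$ test of Lemma~\ref{lem:partial211ST}, computing the kernels of $f_{110}$ and $f_{101}$ and the intersection of their images, and we estimate the additional rank contributed by $I_{200}\cdot I_{011}$.

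The main obstacle is this last step. The two copies of $T_{\mathrm{skewcw},2}$ produce symmetric families of candidates, and the perp spaces $I^\perp_{110}$, which are $9$-dimensional inside $A\otimes B$, have to be handled explicitly enough to compute $\ker f^{\wedge\wedge}$ by hand. We plan to exploit the torus weights, since they split $I^\perp_{110}$ according to which summand pairs appear, to decompose the kernel computation into small weight blocks. This should show that the kernel is at most $98+36$ minus the bound on the degree $(2,0,0)$ contributions, so that every candidate fails a square test. That contradicts $[I]\in\Slip{9}{X}$, and hence $\brk(T)=10$.
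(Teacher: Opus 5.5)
Your framework matches the paper's: border apolarity with $\mathbb{B}_E\times\mathbb{B}_F$, candidate triples, Lemma~\ref{lem:inverseSystem} to constrain the $200$ and $020$ spaces, then the $(220)$ and $(211)$ square tests. But the proposal stops where the proof begins. That every candidate fails a square test is only announced (``we expect'', ``this should show''), and the one quantitative target you state cannot be met.

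In the actual enumeration, every candidate triple contains, up to symmetry, $W_{3000,L}$ or $W_{3000,R}$. These are $\Lambda^2E\oplus\Lambda^2F$ plus $\langle e_1e_1,e_1e_2,e_2e_2\rangle$, resp.\ $\langle e_1e_1,e_1e_2,e_1e_3\rangle$, so you may take one of them as $I_{110}^\perp$. Then $I_{110}=P\oplus A_1^\vee B_2^\vee\oplus A_2^\vee B_1^\vee\oplus U$, where $P\subseteq A_1^\vee B_1^\vee$ is $3$-dimensional and $U$ is the $6$-dimensional space of symmetric elements of $A_2^\vee B_2^\vee$. A blockwise computation gives the codimension of $I_{110}\cdot I_{110}$ as $30,27,27,15$ in the blocks $S^2A_1^\vee\otimes S^2B_1^\vee$, $S^2A_1^\vee\otimes B_1^\vee B_2^\vee$, $A_1^\vee A_2^\vee\otimes S^2B_1^\vee$, $S^2A_2^\vee\otimes S^2B_2^\vee$, and $0$ in the other five blocks. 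The total is exactly $99$.

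Equivalently, $\dim\ker f^{\wedge\wedge}_{I_{110}^\perp}=135$. So ``at most $98+36$ minus the $(2,0,0)$ contribution'' is impossible; the sign is reversed relative to your own earlier, correct, sentence. The $I_{110}\cdot I_{110}$ part passes the $(220)$ test with no slack, and the entire elimination has to come from $I_{200}\cdot I_{020}$ adding at least one dimension.

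With that correction, your $(220)$ route does close by hand, and more uniformly than the paper's argument.

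\begin{itemize}
\item Since $W_{3000,*}$ passes the $(210)$ test without excess, Lemma~\ref{lem:inverseSystem} (used as in the paper) puts at least five of the six monomials of $S^2E$ into $I_{200}^\perp$.
\item $I_{200}$ is $12$-dimensional and, being $\mathbb{B}_T$-fixed, graded by the $E/F$ splitting. Hence $\dim(I_{200}\cap S^2A_2^\vee)\ge 12-9-1=2$.
\item By the $A\leftrightarrow B$ symmetry of $W_{3000,*}$, also $\dim(I_{020}\cap S^2B_2^\vee)\ge 2$.
\item The component of $I_{110}\cdot I_{110}$ in the block $S^2A_2^\vee\otimes S^2B_2^\vee$ is $U\cdot U$. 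This is the $21$-dimensional subspace invariant under $\alpha'_i\leftrightarrow\beta'_i$.
\item $(I_{200}\cap S^2A_2^\vee)\cdot(I_{020}\cap S^2B_2^\vee)$ is a tensor product $Q\otimes R\subseteq S^2V\otimes S^2V$ with $\dim Q,\dim R\ge 2$. Such a product is not contained in the symmetric tensors.
\item Since $(I^2)_{220}=I_{110}I_{110}+I_{200}I_{020}$, this gives $\HF{S[X]/I^2}{2,2,0}\le 98<99$, contradicting Corollary~\ref{c:220testsv}.
\end{itemize}

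For comparison, the paper adjoins only $I_{200}$ and eliminates $50$ of the resulting $51$ quadruples with the $(211)$ test. It does one by hand, using the $18$-dimensional contribution of $I_{200}I_{011}$, and $49$ in \textit{Macaulay2}. It uses $(220)$ with $I_{020}$ only for the last quadruple.

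Separately, check your extra torus. Write $T=T_E+T_F$ with $T_E\in\Lambda^3E$, $T_F\in\Lambda^3F$. A torus scaling $E$ and $F$ independently in all three factors sends $[T]$ to $[s^3T_E+t^3T_F]$. It does not fix $[T]$, so Theorem~\ref{ba} does not apply with it. Either drop it, as the paper does, or use a torus with different weights on the three factors, as in the remark following the proof.
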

\begin{proof}
Let $T =  T_{\mathrm{skewcw},2}^{\oplus 2}$.  
    Since $\brk(T_{\mathrm{skewcw},2})= 5$ by \cite[Proposition~3.2]{CGLV22}, it is enough to show that $\brk(T) > 9$. We use border apolarity (Theorem~\ref{ba}). Let $X = \PP A\times \PP B\times \PP C$.
    Whenever in this proof we write that $I$ is a candidate ideal we mean that $I$ is a $\mathbb{Z}^3$-homogeneous ideal of $S[X]$ such that 
    \begin{enumerate}
        \item $I\subseteq \Ann(T)$
        \item $\HF{S[X]/I}{\mathbf{u}} = 9$ for all $\mathbf{u}$ such that $I$ has a minimal generator of degree $\mathbf{u}$ 
        \item $I$ is $\mathbb{B}_T$-fixed where $\mathbb{B}_T$ is defined below.
    \end{enumerate}
    In particular $I$ might not be a point of $\Hilb_{S[X]}^{h_{9,X}}$. We get elimination by considering only $I_{\mathbf{u}}$ with $\mathbf{u}\in \{110,101,011,200,020\}$. We begin with enumerating possible $I_{110}$ and by symmetry also $I_{101}$ and $I_{011}$.
    Let $E,F\cong \mathbb{C}^3$. Then $A,B,C\cong E\oplus F$. Our tensor lives in $\Lambda^3E \oplus \Lambda^3 F$ and $T(C^\vee) = \Lambda^2 E \oplus \Lambda^2 F$.
    Fix bases $e_1, e_2, e_3$ of $E$ and $f_1, f_2, f_3$ of $F$. Let $\mathbb{B}_E$ (resp. $\mathbb{B}_F$) be the group of all automorphisms of $E$ (resp. $F$) that in the chosen bases are represented by upper-triangular matrices with determinant $1$. Let $\mathbb{B}_T = \mathbb{B}_E\times \mathbb{B}_F$. We look for a $3$-dimensional $\mathbb{B}_T$-fixed subspace in the complement of $T(C^\vee)$ in $(E\oplus F)^{\otimes 2} = T(C^\vee) \oplus S^2E \oplus (E\otimes F) \oplus (F\otimes E) \oplus S^2F$. Let $\mathfrak{b}_T$ be the Lie algebra of $\mathbb{B}_T$. The following directed graphs illustrate the action of $\mathfrak{b}_T$ on $S^2E$ and $E\otimes F$. There are similar graphs for $F\otimes E$ and $S^2F$. We consider the action of the $4$ generators $x_1,x_2,x_3,x_4$ of $\mathfrak{b}_T$ corresponding to the entries on the superdiagonals of the two $3\times 3$ matrices. An edge indicates that there is an element $x_k$ that takes the label of the source to the label of the target. 

    {\tiny 
       \begin{center}
        \begin{tikzcd}[cramped, column sep = tiny]
            & e_{1}\otimes e_{1} &  & & & e_{1}\otimes f_{1} & &     \\
            & e_{1}\otimes e_{2} + e_2\otimes e_1 \arrow[u] & &&  e_{1}\otimes f_{2} \arrow[ur] & & e_{2}\otimes f_{1} \arrow[ul] &\\
            e_{2}\otimes e_{2} \arrow[ru] & & e_{1}\otimes e_{3} + e_3\otimes e_1 \arrow[lu]  & e_{1}\otimes f_{3} \arrow[ur] & &   e_{2}\otimes f_{2} \arrow[ru]\arrow[lu] & & e_{3}\otimes f_{1} \arrow[ul] \\
             & e_{2}\otimes e_{3} + e_3\otimes e_2 \arrow[lu] \arrow[ru] & & &   e_{2}\otimes f_{3} \arrow[ur] \arrow[ul] & & e_{3}\otimes f_{2} \arrow[ul] \arrow[ru]& \\
               & e_{3}\otimes e_{3} \arrow[u] &  & &  & e_{3}\otimes f_{3}\arrow[lu] \arrow[ru] & & 
        \end{tikzcd}
    \end{center}
    }
       
    Up to symmetry we may take subspaces of $S^2E \oplus (E\otimes F)\oplus (F\otimes E) \oplus S^2 F$ with the following pure distributions:
    \begin{align*}
        (3000) & \,2 \\
        (2001) & \,1 \\
        (2100) & \,1 \\
        (1101) & \,1 \\
        (1200) & \,2 \\
        (1110) & \,1 \\
        (0300) & \,3 \\
        (0210) & \,2 
    \end{align*}
Here $(d_1d_2d_3d_4) \,k$ means that that there are $k$ $\mathbb{B}_T$-fixed $3$ dimensional subspaces of $S^2E \oplus (E\otimes F)\oplus (F\otimes E) \oplus S^2 F$
for which the dimensions of intersections with the $4$ direct summands are $d_1,d_2,d_3$ and $d_4$, respectively.
There are also choices involving the element $e_1\otimes f_1 + f_1\otimes e_1$ (but not containing $e_1\otimes f_1$). Up to symmetry there are $2$ such choices 
 corresponding to the following pure distributions of the $2$-dimensional complement of $\langle e_1\otimes f_1 + f_1\otimes e_1\rangle$
   \begin{align*}
        (2000) & \,1 \\
        (1001) & \,1
    \end{align*}
    However, as can be checked \textit{Macaulay2}\cite{M2}, only the $3$ choices corresponding to the first two pure distributions pass the $(210)$ and $(120)$ tests. 
    Taking into account the symmetries we get $6$ choices for each of $I^\perp_{011}, I^\perp_{101}$ and $I^\perp_{110}$.
    This gives a total of $216$ choices of triples $(I^\perp_{011}, I^\perp_{101}, I^\perp_{110})$. Among them, there are $22$ that pass the $(111)$ test.
    
    Let $W_{2001}$ be the unique $\mathbb{B}_T$-fixed $9$-dimensional subspace of $(E\oplus F)^{\otimes 2}$ corresponding to the pure distribution $(2001)$.
    Let $W_{3000,L}$ and $W_{3000,R}$ be the two $\mathbb{B}_T$-fixed $9$-dimensional subspaces of $(E\oplus F)^{\otimes 2}$ corresponding to the pure distribution $(3000)$ with $e_2\otimes e_2 \in W_{3000,L}$ and $e_1\otimes e_3 + e_3\otimes e_1 \in W_{3000,R}$. Up to symmetry the $22$ triples can be reduced to the following:
    \begin{align*}
    &(I_{011}^\perp, I_{101}^\perp, I_{110}^\perp) = (W_{3000,L}, W_{3000,L},  W_{3000,L})\\
    &(I_{011}^\perp, I_{101}^\perp, I_{110}^\perp) = (W_{3000,R}, W_{3000,R}, W_{3000,R}) \\
    &(I_{011}^\perp, I_{101}^\perp, I_{110}^\perp) = (W_{3000,L}, W_{3000,R}, W_{3000,R}) \\
    &(I_{011}^\perp, I_{101}^\perp, I_{110}^\perp) = (W_{3000,L}, W_{3000,L}, W_{3000, R}) \\
    &(I_{011}^\perp, I_{101}^\perp, I_{110}^\perp) = (W_{2001}, W_{3000,R}, W_{3000,R} ).
    \end{align*}

    Next we look for possible $200$ spaces. We need a $\mathbb{B}_T$-fixed $9$-dimensional subspace of $S^2(E\oplus F) = S^2 E \oplus (EF) \oplus S^2F$, where $EF$ is spanned by elements of the form $e_i\otimes f_j + f_j\otimes e_i$. The graph representing the $\mathfrak{b}_T$-action on $EF$ is isomorphic to the graph for $E\otimes F$ considered above. Up to symmetry we may take subspaces of $S^2E \oplus EF \oplus  S^2 F$ with the following pure distributions:
     
     \begin{minipage}[t]{.30\textwidth}
     \begin{align*}
        (630) & \, 3 \\
        (621) & \, 2 \\
        (612) & \, 1 \\
        (603) & \, 2 \\
        (540) & \, 3 \\
        (531) & \, 3 \\
        (522) & \, 2 \\
        (513) & \, 2 \\
        (504) & \, 1 \\
     \end{align*}    
     \end{minipage}
     \begin{minipage}[t]{.30\textwidth}
       \begin{align*}
        (450) & \, 3 \\
        (441) & \, 3 \\
        (432) & \, 3\\
        (423) & \, 4\\
        (414) & \, 1\\
        (360) & \, 6\\
        (351) & \, 6\\
        (342) & \, 6 \\
        (333) & \, 12\\
        \end{align*}
        \end{minipage}
        \begin{minipage}[t]{.30\textwidth}
        \begin{align*}
        (270) & \, 2\\
        (261) & \, 3\\
        (252) & \, 3\\
        (180) & \, 1\\
        (171) & \, 2\\
        (090) & \, 1
    \end{align*}
    \end{minipage}
       
Taking into account the symmetries it gives a total of $131$ choices for the $200$ space.
As explained above, up to symmetry, there are two choices of $I_{110}^\perp$: $W_{3000, L}$ and $W_{3000,R}$. Both of them pass the $(210)$ test without excess. 
Therefore, $I_{210}^\perp = \{F\in \widetilde{S[X]}_{210}\mid A^\vee\lrcorner F \in I_{110}^\perp\}$  by Lemma~\ref{lem:inverseSystem}.
Both $W_{3000, L}$ and $W_{3000,R}$ contain $e_1\otimes e_1$ and $e_2\otimes e_1$ and $W_{3000,R}$ contains also $e_3\otimes e_1$. It follows that 
$I_{210}^\perp$ contains $(e_1e_i)\otimes e_1$ and $(e_2e_i)\otimes e_1$ for $i=1,2,3$. As a result, $I_{200}^\perp$ contains $e_1e_i$ and $e_2e_i$ for $i=1,2,3$.
So a necessary condition for a $200$ space to pass the $(210)$ test (including both the newly added $200$ space and the already fixed $110$ space) is that its pure distribution is of the form $(\ge 5,?,?)$. Furthermore, if $I_{110}^\perp = W_{3000,R}$, then $(e_3e_i)\otimes e_1$ is in $I_{210}^\perp$ so $e_3e_i \in I_{200}^\perp$ and the pure distribution is of the form $(6,?,?)$. This gives at most $8\cdot 4 = 32$ quadruples $(I_{011}^\perp, I_{101}^\perp, I_{110}^\perp, I_{200}^\perp)$ passing all total degree $3$ tests with $I_{110}^\perp = W_{3000,R}$ and at most $1\cdot 19 =19$ such quadruples with $I_{110}^\perp = W_{3000,L}$ for a total of $51$ quadruples. Here we do an explicit computer-free elimination based on the $(211)$ square test for the quadruple $(W_{3000,R}, W_{3000,R}, W_{3000,R}, W_{603,R}) $ where $W_{603,R}$ is the unique $\mathbb{B}_T$-fixed $200$ space with pure distribution $(603)$ and containing $f_1f_3$. We discuss the remaining $50$ quadruples at the end of the proof. In order to lower bound the contribution of $I_{200}$ and $I_{011}$ towards the $(211)$ square test we switch to the ideal side. 

Let $A = A_1\oplus A_2$ with $A_1 = \langle a_1,a_2,a_3\rangle$ and $A_2 = \langle a'_1,a'_2,a'_3\rangle$ where we identify $A_1$ and $A_2$ with $E$ and $F$, respectively. More precisely, for $i=1,2,3$, we identify $a_i$ with $e_i$ and $a'_i$ with $f_i$. Let $B_1,B_2, C_1,C_2$ be defined in a similar way. Let 
\[
S[X] = \CC[\alpha_1,\alpha_2,\alpha_3, \alpha'_1, \alpha'_2, \alpha'_3, \beta_1, \beta_2, \beta_3, \beta_1', \beta_2', \beta_3', \gamma_1, \gamma_2, \gamma_3, \gamma_1', \gamma_2', \gamma_3']
\]
where $\alpha_1,\alpha_2,\alpha_3$ is the dual basis for $A_1^\vee$, $\alpha_1', \alpha_2',\alpha_3'$ is the dual basis for $A_2^\vee$ and similarly for the other variables.

\begin{claim}\label{cla:reducingST}
    If $(I_{011}^\perp, I_{101}^\perp, I_{110}^\perp, I_{200}^\perp) = (W_{3000,R}, W_{3000,R}, W_{3000,R}, W_{603,R})$, then 
    \[
    \dim_\CC S[X]_{211}/(I_{110}I_{101}) \ge \dim_\CC S[X]_{211}/(I_{110}I_{101}+ I_{200}I_{011}) + 18. 
    \]
\end{claim}
\begin{proof}
Since $I_{110}^\perp = W_{3000,R}$, it follows that $I_{110}$ is spanned by $A_1^\vee B_2^\vee$, $A_2^\vee B_1^\vee$,  $\langle \alpha_2\beta_2, \alpha_2\beta_3+\alpha_3\beta_2, \alpha_3\beta_3\rangle$ and $\langle \alpha_1'\beta_1', \alpha_1'\beta_2' + \alpha_2'\beta_1', \alpha_2'\beta_2', \alpha_1'\beta_3'+\alpha_3'\beta_1', \alpha_2'\beta_3'+\alpha_3'\beta_2', \alpha_3'\beta_3'\rangle$.
$I_{101}$ and $I_{011}$ have analogous bases. $I_{200}$ is spanned by $A_1^\vee A_2^\vee$ and $((\alpha'_2)^2, \alpha'_2\alpha'_3, (\alpha_3')^2)$. Therefore, $I_{200}\cdot I_{011}$ contains
\begin{align}
&\alpha_1\cdot \langle \alpha'_1, \alpha'_2, \alpha'_3\rangle \cdot \langle \beta_2\gamma_2, \beta_2\gamma_3+\beta_3\gamma_2, \beta_3\gamma_3\rangle \textrm{ and } \notag\\
&\langle (\alpha'_2)^2, \alpha'_2\alpha'_3, (\alpha'_3)^2\rangle \cdot \langle \beta_1'\gamma_1', \beta_1'\gamma_2'+\beta_2'\gamma'_1, \beta_1'\gamma_3'+\beta_3'\gamma_1'\rangle. \label{eq:211contribution}    
\end{align}
We claim that the $18$ dimensional space spanned by these elements has trivial intersection with $I_{110} I_{101}$.
Observe that $I_{110}I_{101}\subseteq S[X]_{211}$ has no elements with a nonzero coefficient in front of any monomial in $\alpha_1 \cdot \langle \alpha'_1, \alpha'_2,\alpha'_3\rangle \cdot \langle \beta_2\gamma_2, \beta_2\gamma_3, \beta_3\gamma_3\rangle$.
In order to prove the claim it is thus sufficient to show that the $9$-dimensional vector space \eqref{eq:211contribution} has trivial intersection with 
the subspace of $S[X]_{211}$ spanned by all pairwise products of elements from the following two vector subspaces of $S[X]$:
\begin{align*}
&\langle \alpha_1'\beta_1', \alpha_1'\beta_2' + \alpha_2'\beta_1', \alpha_2'\beta_2', \alpha_1'\beta_3'+\alpha_3'\beta_1', \alpha_2'\beta_3'+\alpha_3'\beta_2', \alpha_3'\beta_3' \rangle \subseteq S[X]_{110}   \\
&\langle \alpha_1'\gamma_1', \alpha_1'\gamma_2' + \alpha_2'\gamma_1', \alpha_2'\gamma_2', \alpha_1'\gamma_3'+\alpha_3'\gamma_1', \alpha_2'\gamma_3'+\alpha_3'\gamma_2', \alpha_3'\gamma_3'\rangle \subseteq S[X]_{101}.
\end{align*}
Let $F$ be an element of this intersection. Let $S'=\CC[\alpha'_1, \alpha'_2, \alpha'_3, \beta_1', \beta_2', \beta_3']$ be a $\mathbb{Z}^2$-graded polynomial ring with $\deg(\alpha_i') = (1,0)$ and $\deg(\beta_i') = (0,1)$ and consider the ring homomorphism $\pi\colon S[X]\to S'$ given by sending all non-primed variables of $S[X]$ to zero, $\alpha'_i$'s and $\beta_i'$'s to themselves and $\gamma'_i\mapsto \beta_i'$. Then $\pi(F)$ belongs to the degree $(2,2)$ part of the intersection of ideals
\[
[((\alpha_2')^2, \alpha_2'\alpha_3', (\alpha_3')^2)(\beta_1')(\beta_1', \beta_2',\beta_3')] \cap (\alpha'_1\beta'_1, \alpha'_1\beta'_2+\alpha'_2\beta'_1, \alpha'_1\beta'_3 + \alpha'_3\beta'_1, \alpha'_2\beta'_2, \alpha'_2\beta'_3+\alpha'_3\beta'_2, \alpha'_3\beta'_3)^2.
\]
This implies that $\pi(F) =0$ since every element of degree $(2,2)$ in $(\alpha'_1\beta'_1, \alpha'_1\beta'_2+\alpha'_2\beta'_1, \alpha'_1\beta'_3 + \alpha'_3\beta'_1, \alpha'_2\beta'_2, \alpha'_2\beta'_3+\alpha'_3\beta'_2, \alpha'_3\beta'_3)^2$ is stable under the involution of $S'$ that swaps $\alpha_i'$ with $\beta_i'$ but there is no nonzero element of degree $(2,2)$ in $((\alpha_2')^2, \alpha_2'\alpha_3', (\alpha_3')^2)\cdot (\beta_1')\cdot (\beta_1', \beta_2',\beta_3')$ with this property.
Since $\pi$ is injective when restricted to \eqref{eq:211contribution}, it follows that $F=0$.
\end{proof}

We proceed to upper bound $\dim_\CC S[X]_{211}/(I_{110}I_{101})$ using Lemma~\ref{lem:partial211ST}.
By symmetry $\dim_\CC \ker f_{110} = \dim_\CC \ker f_{101}$. Furthermore, since the $B$ factor does not contribute to the kernel of $f_{101}$, the latter dimension is equal to $\dim_\CC B$ times the codimension of the image of the $(201)$ map. As stated above, $I_{101}$ passes the $(201)$ test without excess. Therefore
\begin{equation}\label{eq:dimKer}
\dim_\CC \ker f_{110} + \dim_\CC \ker f_{101} = 2\cdot 6\cdot 9 = 108.    
\end{equation}

We are left with establishing an upper bound on the dimension of the intersection of the image of $f_{110}$ with the image of $f_{101}$.
Due to the decompositions $I_{110}^\perp = (I_{110}^\perp\cap A_1\otimes B_1) \oplus (I_{110}^\perp \cap A_2\otimes B_2)$ 
and $B\otimes C = (B_1\otimes C_1)\oplus (B_1\otimes C_2)\oplus (B_2\otimes C_1)\oplus (B_2\otimes C_2)$, the image of $f_{110}$ is equal to the sum of the images of 
\begin{align*}
    &f_{110}^{1:11}\colon (I_{110}^\perp \cap A_1\otimes B_1) \otimes A_1\otimes C_1 \to \Lambda^2 A_1 \otimes B_1\otimes C_1\\
    &f_{110}^{1:12}\colon (I_{110}^\perp \cap A_1\otimes B_1) \otimes A_1\otimes C_2 \to \Lambda^2 A_1 \otimes B_1\otimes C_2\\
    &f_{110}^{1:21}\colon (I_{110}^\perp \cap A_1\otimes B_1) \otimes A_2\otimes C_1 \to (A_1\otimes A_2 + A_2\otimes A_1) \otimes B_1\otimes C_1 \\
    &f_{110}^{1:22}\colon (I_{110}^\perp \cap A_1\otimes B_1) \otimes A_2\otimes C_2 \to (A_1\otimes A_2 + A_2\otimes A_1) \otimes B_1\otimes C_2\\
    &f_{110}^{2:11}\colon (I_{110}^\perp \cap A_2\otimes B_2) \otimes A_1\otimes C_1 \to (A_1\otimes A_2 + A_2\otimes A_1) \otimes B_2\otimes C_1\\
    &f_{110}^{2:12}\colon (I_{110}^\perp \cap A_2\otimes B_2) \otimes A_1\otimes C_2 \to (A_1\otimes A_2 + A_2\otimes A_1) \otimes B_2\otimes C_2\\
    &f_{110}^{2:21}\colon (I_{110}^\perp \cap A_2\otimes B_2) \otimes A_2\otimes C_1 \to \Lambda^2 A_2 \otimes B_2\otimes C_1 \\
    &f_{110}^{2:22}\colon (I_{110}^\perp \cap A_2\otimes B_2) \otimes A_2\otimes C_2 \to \Lambda^2 A_2 \otimes B_2\otimes C_2
\end{align*}
which are obtained from $f_{110}$ by restricting the domain and codomain. There is a similar decomposition of the image of $f_{101}$. Since the intersection of the image of $f_{110}$ with $\Lambda^2 A_i \otimes B\otimes C$ is contained in $\Lambda^2 A_i \otimes B_i\otimes C$, it follows that $f_{110}^{1:12}$ and $f_{110}^{2:21}$ do not contribute towards the intersection of the images of $f_{110}$ and $f_{101}$. The dimensions of the codomains of $f_{110}^{1:11}$ and $f_{110}^{2:22}$ are $27$, so
\begin{equation}\label{eq:wedge2} 
\dim_\CC (\mathrm{im} f_{110}^{1:11} \cap \mathrm{im} f_{101}^{1:11}) \le 27 \text{ and } \dim_\CC (\mathrm{im} f_{110}^{2:22} \cap \mathrm{im} f_{101}^{2:22})\le 27.
\end{equation}

The domains of $f_{110}^{2:11}$ and $f_{101}^{1:22}$ are contained in $(A_2\otimes B) \otimes (A_1 \otimes C)$ and $(A_1\otimes C) \otimes (A_2\otimes B)$, respectively. Since the antisymmetrization map $A\otimes A\to \Lambda^2 A$ when restricted to $A_1\otimes A_2$ or $A_2\otimes A_1$ is injective, it follows that 
\begin{align}
    \dim_\CC (\mathrm{im} f_{110}^{1:22} \cap \mathrm{im} f_{101}^{2:11}) &= \dim_\CC (\mathrm{im} f_{110}^{2:11} \cap \mathrm{im} f_{101}^{1:22}) \notag \\
    &= \dim_\CC (I_{110}^\perp\cap A_2\otimes B_2)\dim_\CC (I_{101}^\perp \cap A_1\otimes C_1) = 3\cdot 6 = 18 \label{eq:2:11}
\end{align}
where the first equality follows by symmetry.

Similarly,
\begin{align}\label{eq:2:12}
   \dim_\CC(\mathrm{im} f_{110}^{2:12} \cap \mathrm{im} f_{101}^{2:12}) &= \dim_\CC (A_1)  \dim_\CC ([(I_{110}^\perp\cap A_2\otimes B_2)\otimes C_2] \cap [(I_{101}^\perp\cap A_2\otimes C_2)\otimes B_2]) 
   \notag \\
   &= 3\cdot \dim_\CC \langle T_2\rangle  = 3
\end{align}
where the second equality follows from the fact that every element of $[(I_{110}^\perp\cap A_2\otimes B_2)\otimes C_2] \cap [(I_{101}^\perp\cap A_2\otimes C_2)\otimes B_2]$ is antisymmetric with respect to the $A_2$ and $B_2$ factors and antisymmetric with respect to the $A_2$ and $C_2$ factors. Thus, it is a multiple of $T_{\mathrm{skewcw},2}\in A_2\otimes B_2\otimes C_2$.

\begin{claim}\label{cl:1:21}
    $\dim_\CC (\mathrm{im} f_{110}^{1:21} \cap \mathrm{im} f_{101}^{1:21}) \le 39$
\end{claim}
\begin{proof}
    The images of the following $39$ elements are in the intersection. Here $j=1,2,3.$
    \begin{align*}
        &(a_1\otimes b_i)\otimes (a'_j\otimes c_k)  = (a_1\otimes c_k) \otimes (a'_j\otimes b_i)&\, i,k = 1,2,3\\
        &(a_i\otimes b_1)\otimes (a'_j\otimes c_1) = (a_i\otimes c_1) \otimes (a'_j\otimes b_1) &\, i=2,3\\
        &(a_2\otimes b_3 - a_3\otimes b_2) \otimes (a'_j \otimes c_1) = (a_2\otimes c_1)\otimes (a'_j \otimes b_3) - (a_3\otimes c_1)\otimes (a_j'\otimes b_2) \\
        &(a_2\otimes b_1)\otimes (a'_j\otimes c_3) - (a_3\otimes b_1)\otimes (a'_j\otimes c_2) = (a_2\otimes c_3 - a_3\otimes c_2) \otimes (a'_j\otimes b_1).     \end{align*}
    It is also straightforward to verify that they are linearly independent.
Suppose that the intersection is larger. Then there is a nonzero linear combination $F$ of the following elements (with $j=1,2,3$):
\begin{align*}
    & (a_3\wedge a_j') \otimes b_1 \otimes c_k &\, k=2,3\\
    & (a_2\wedge a_j') \otimes b_1 \otimes c_2 \\
    & (a_2\wedge a_j')\otimes b_3\otimes c_k - (a_3\wedge a_j')\otimes  b_2\otimes c_k &\, k=2,3
\end{align*}
that belongs to the image of $f_{101}^{1:21}$. Since none of these $15$ elements depends on $a_1$ or $c_1$, it follows that $F$ belongs to the image of $\langle a_2\otimes c_3 - a_3\otimes c_2\rangle\otimes A_2\otimes B_1$ under $f_{101}^{1:21}$.
In particular, the coefficient of $F$ in front of $(a_3\wedge a_j')\otimes b_1 \otimes c_2$ is opposite to its coefficient in front of $(a_2\wedge a_j')\otimes b_1 \otimes c_3$ and thus is equal to zero. Since no element in  $\langle a_2\otimes c_3 - a_3\otimes c_2\rangle\otimes A_2\otimes B_1$ has a nonzero coefficient in front of any tensor of the form $a_i\otimes a_j' \otimes b_k\otimes c_i$ with $i=2,3$, $j=1,2,3$ and $k=1,2,3$, it follows that $F = 0$.
\end{proof}

After these preparations we can conclude the proof that no ideal $[I]$ in $\Slip{9}{X}$ satisfies  $(I_{011}^\perp, I_{101}^\perp, I_{110}^\perp, I_{200}^\perp) = (W_{3000,R}, W_{3000,R}, W_{3000,R}, W_{603,R})$. Indeed, for any such ideal by Corollary~\ref{c:211testsv} we have $\HF{S[X]/I^2}{2,1,1} \ge 9(\dim_\CC A+\dim_\CC B+\dim_\CC C-2) = 144$. Therefore, by Claim~\ref{cla:reducingST}, the codimension of the image of $I_{110}\otimes I_{101} \to S[X]_{211}$ is at least $144 + 18 = 162$. Hence 
\[
\dim_\CC [\mathrm{im}(f_{110}) \cap \mathrm{im}(f_{101})] \ge 162 - 108 + 81 = 135
\]
follows from Lemma~\ref{lem:partial211ST} and equation \eqref{eq:dimKer}.
However, by Claim~\ref{cl:1:21} and equations \eqref{eq:wedge2}, \eqref{eq:2:11} and \eqref{eq:2:12}, this dimension is equal to at most 
\[
2\cdot 27 + 2\cdot 18 + 39 + 3 = 132.
\]

This finishes the proof that $(I_{011}, I_{101}, I_{110}, I_{200})$ fails the $(211)$ square test and so does not extend to an ideal in $\Slip{9}{X}$. $49$ out of the other $50$ quadruples can be eliminated in a similar way using \textit{Macaulay2} \cite{M2}. The remaining quadruple is $(I_{011}^\perp, I_{101}^\perp, I_{110}^\perp, I_{200}^\perp) = (W_{3000,R}, W_{3000,R}, W_{3000,R}, W_{630,LL})$ where $W_{630,LL}$ is the unique $\mathbb{B}_T$-fixed $200$ space with pure distribution $(630)$ and containing $e_1\otimes f_3 + f_3\otimes e_1$. This one passes the $(211)$ square test. By symmetry, we get unique $\mathbb{B}_T$-fixed $020$ space $W_{630,LL}$ such that $(I_{011}^\perp, I_{101}^\perp, I_{110}^\perp, I_{020}^\perp) = (W_{3000,R}, W_{3000,R}, W_{3000,R}, W_{630,LL})$ passes the $(121)$ square test. The quintuple
\[
(I_{011}^\perp, I_{101}^\perp, I_{110}^\perp, I_{200}^\perp, I_{020}^\perp) = (W_{3000,R}, W_{3000,R}, W_{3000,R}, W_{630,LL}, W_{630,LL})
\]
fails the $(220)$ square test from Corollary~\ref{c:220testsv}.
\end{proof}

\begin{remark}
Since $W$ is a degeneration of $M_{\langle 1\rangle}^{\oplus 2}$, Theorem~\ref{thm:leit} proves also that $\brk(T_{\mathrm{Leit},{5}}^{\oplus 2}) = 12$.
On the other hand, Theorem~\ref{thm:skew} cannot be strengthened in a similar way. The border rank of $T_{\mathrm{skewcw},2}\boxtimes W$ is $9$ by \cite[Proposition~8.4]{HL26}.
\end{remark}

\begin{remark}
    In fact the group $\mathbb{B}_T$ used in the above proof is not a Borel-subgroup of the stabilizer of $T_{\mathrm{skewcw},2}\oplus T_{\mathrm{skewcw},2}$. It can be enlarged as the following one-dimensional torus also stabilizes $T_{\mathrm{skewcw},2}$ but does not belong to $\mathbb{B}_E$:
    \begin{align*}
    t\cdot a_i = t^{\delta_{1i}}a_i\\
    t\cdot b_i = t^{-1}t^{\delta_{1i}}b_i\\
    t\cdot c_i = t^{\delta_{1i}}c_i.
    \end{align*}
    Here $\delta_{1i}$ is the Kronecker delta. Using this bigger group one can eliminate the $I_{110}$ spaces with $I_{110}^\perp$ containing $\langle e_1\otimes f_1 + f_1\otimes e_1\rangle$ but not containing $e_1\otimes f_1$ even before performing the $(210)$ tests.
\end{remark}

\begin{remark}\label{rmk:about_techniques}
The border ranks of the tensors from Theorems~\ref{thm:Scho}, \ref{thm:leit} and  \ref{thm:skew} cannot be computed using Koszul flattenings \cite{LO15} or weak border apolarity (Theorem~\ref{wba}). More precisely,
\begin{itemize}
 \item The best lower bounds for the border rank coming from $p$-th Koszul flattenings (both applied directly to the given tensors and to their restrictions to random subspaces of one of the three factors) are  $10$, $11$ and $9$, respectively.
\item There are $\mathbb{Z}^3$-homogeneous ideals contained in the annihilators of the given tensors which are points in the multigraded Hilbert schemes corresponding to $r = 11, 11, 9$, respectively.
\end{itemize}
\end{remark}

\section{Example: \texorpdfstring{$3\times 3$}{3x3} matrix multiplication tensor}\label{s:m3}
Using the square test we can also recover the current best known lower bound for the border rank of the $3\times 3$ matrix multiplication tensor \cite[Theorem~1.1]{chl19}. Let $X = \PP^8\times \PP^8\times \PP^8$.

Let $U,V,W\cong \mathbb{C}^3$ and let $A = U^\vee\otimes V$, $B=V^\vee\otimes W$ and $C=W^\vee\otimes U$. Fix bases $\{u_1,u_2,u_3\}$, $\{v_1, v_2, v_3\}$, $\{w_1,w_2,w_3\}$ for $U,V$ and $W$, respectively with dual bases $\{u^1, u^2, u^3\}$, $\{v^1, v^2, v^3\}$, $\{w^1, w^2, w^3\}$. Using this notation, $M_{\langle 3\rangle} = \sum_{i,j,k = 1}^3 (u^i\otimes v_j)\otimes (v^j\otimes w_k)\otimes (w^k\otimes u_i)$. Let  $\mathbb{B}_U$ (resp. $\mathbb{B}_V$, resp. $\mathbb{B}_W$) be the group of all automorphisms of $U$ (resp. $V$, resp. $W$) that in the chosen bases are represented by upper-triangular matrices. Let $\mathbb{B} = \mathbb{B}_U\times \mathbb{B}_V\times \mathbb{B}_W$. As observed in the original proof in \cite{chl19}, there are $8$ choices of $\mathbb{B}$-fixed $16$-dimensional spaces $I_{110}^\perp$ containing $M_{\langle 3\rangle}(C^\vee)$ and passing the $(210)$ and $(120)$ tests. All of them fail the $(220)$ square test. Here we present this explicitly for one choice of $I_{110}^\perp$ using Corollary~\ref{c:220testsv} and Lemma~\ref{lem:partial220ST}.

Let $a^i_j = u^i\otimes v_j \in A$ and define $b^{i}_j$ and $c^{i}_j$ in a similar way. 
Let $\{\alpha^{i}_j\}_{i,j}$, $\{\beta^i_j\}_{i,j}$ and $\{\gamma^{i}_j\}_{i,j}$ be the dual bases. 
We consider the $110$ space given by $I_{110}^\perp = M_{\langle 3 \rangle}(C^\vee) \oplus E'_{110}$ where $E'_{110} \oplus \langle a^3_1b^1_1 + a^3_2b^2_1 + a^3_3b^3_1\rangle = \langle a^3_ib^j_1 \mid (i,j) \ne (3,1) \rangle$. By Corollary~\ref{c:220testsv} and Lemma~\ref{lem:partial220ST} one may exclude this $I_{110}^\perp$ by showing that the dimension of the kernel of $f^{\wedge \wedge}_{I_{110}^\perp}$ is smaller than $16\cdot (2\cdot 9-1) + \frac{1}{2}\cdot 16\cdot 15 = 392$.
Below we decompose $I_{110}^\perp$ into two $8$-dimensional subspaces $D\oplus E$. Then we need to show that

\begin{equation}\label{eq:goal}
    \dim_\CC \ker f^{\wedge\wedge}_{E} + \dim_\CC \ker [D\otimes (A\otimes B) \to (\Lambda^2 A\otimes \Lambda^2B) / (\mathrm{im} f^{\wedge \wedge}_E)] < 392.
\end{equation}

We first compute the kernel of $f^{\wedge \wedge}_{M_{\langle 3\rangle}(C^\vee)}$.
Using the isomorphism $\Lambda^2(U^\vee) \cong U$ we can decompose the source into irreducible $\mathrm{SL}(U)\times \mathrm{SL}(V)\times \mathrm{SL}(W)$-modules as follows:

\begin{align*}
&M_{\langle 3\rangle}(C^\vee) \otimes A\otimes B \cong (U^\vee\otimes 1_V \otimes W)\otimes (U^\vee\otimes V \otimes V^\vee\otimes W) \\
&\cong (S^2(U^\vee)\oplus \Lambda^2(U^\vee))\otimes (1_V \oplus \mathrm{sl}(V))\otimes (S^2 W\oplus \Lambda^2 W)\\
&\cong (S^2(U^\vee) \otimes S^2W) \oplus (S^2(U^\vee)\otimes \mathrm{sl}(V) \otimes S^2W) \\
&\oplus (S^2(U^\vee) \otimes \Lambda^2W) \oplus (S^2(U^\vee)\otimes \mathrm{sl}(V) \otimes \Lambda^2W)\\
&\oplus (U \otimes S^2W) \oplus (U\otimes \mathrm{sl}(V) \otimes S^2W) \oplus (U \otimes \Lambda^2W) \oplus (U\otimes \mathrm{sl}(V) \otimes \Lambda^2W).
\end{align*}
By Schur's lemma it is enough to compute the images of the $8$ highest weight vectors, one for each irreducible submodule, to conclude that the kernel of $f^{\wedge \wedge}_{M_{\langle 3\rangle}(C^\vee)}$ is isomorphic to $(S^2U^\vee\otimes \Lambda^2W) \oplus (U\otimes S^2W)$. Observe that as a subspace of $M_{\langle 3\rangle}(C^\vee) \otimes A\otimes B$ this kernel is equal to 
$\Lambda^2(M_{\langle 3\rangle}(C^\vee))$. 
Let 
\[
D = M_{\langle 3\rangle}(\langle \gamma^i_j \mid i,j=1,2,3, (i,j)\ne  (1,3) \rangle) \subseteq M_{\langle 3\rangle}(C^\vee)
\]
and 
\[
E= \langle a^3_ib^j_1 \mid (i,j) \ne (3,1) \rangle.
\]
It is convenient to replace the decomposition $M_{\langle 3\rangle}(C^\vee)\oplus E'_{110}$ by $D \oplus E$.
It follows from the above, that
\begin{equation}\label{eq:kerD}
\ker f^{\wedge \wedge}_{D} =  \Lambda^2 D.
\end{equation}

Now we analyze the kernel of $f^{\wedge \wedge }_{E}$. Let $A_1 = \langle a^3_i \mid i=1,2,3\rangle$, $A_0 = \langle a^i_j \mid i=1,2, j=1,2,3\rangle$, $B_1 = \langle b^i_1 \mid i=1,2,3\rangle$ and $B_0 = \langle b^i_j \mid i=1,2,3, j=2,3\rangle$. In this way, $A\otimes B$ and $\Lambda^2 A\otimes \Lambda^2 B$ become $\mathbb{Z}^2$-graded vector spaces.
The image of $f^{\wedge \wedge }_{E}$ is equal to $(\Lambda^2A \otimes \Lambda^2 B)_{22} \oplus (\Lambda^2 A \otimes \Lambda^2 B)_{21} \oplus (\Lambda^2 A \otimes \Lambda^2 B)_{12} \oplus L_{11}$
where $L_{11}\subseteq (\Lambda^2 A \otimes \Lambda^2 B)_{11}$ is the image of the (injective) restriction $E\otimes A_0\otimes B_0 \to (\Lambda^2 A \otimes \Lambda^2 B)_{11}$ of $f^{\wedge \wedge}_E$. In particular, $\mathrm{codim}_{(\Lambda^2 A \otimes \Lambda^2 B)_{11}} L_{11} = 36$. Therefore, the image of $f^{\wedge \wedge }_{E}$ has dimension $3\cdot 3+3\cdot 18 + 18\cdot 3 + (18\cdot 18 - 36) = 405$.
Since the source has dimension $8\cdot 81$, it follows that 
\begin{equation}\label{eq:kerE}
    \dim_\CC \ker f^{\wedge\wedge}_{E} = 243.
\end{equation}
Therefore, to prove \eqref{eq:goal} it is sufficient by \eqref{eq:kerE} to show that
\[
\dim_\CC \ker [D\otimes (A\otimes B)\to (\Lambda^2 A\otimes \Lambda^2B)/(\mathrm{im}f^{\wedge \wedge}_E)] < 149.
\]
Since the $64$-dimensional subspace $D\otimes E$ of the source is in the kernel, this can be restated as 
\begin{align*}
\dim_\CC &\ker [D\otimes (A\otimes B)/E \to (\Lambda^2 A\otimes \Lambda^2B)/(\mathrm{im}f^{\wedge \wedge}_E)]< 85.
\end{align*}
Here and below, when we consider $(A\otimes B)/Q$ where $Q\subseteq A\otimes B = \widetilde{S[X]}_{110}$ is spanned by monomials, we always identify $(A\otimes B)/Q$ with the subspace of $A\otimes B = \widetilde{S[X]}_{110}$ spanned by the monomials not in $Q$.

Due to \eqref{eq:kerD}, the image of $\Lambda^2 D$ in $D\otimes (A\otimes B)/E$ is also in the kernel. 
Recall that $D$ is spanned by the following elements:

\begin{minipage}[t]{.30\textwidth}
     \begin{align*}
T^1_1 = a^1_1b_1^1 + a^1_2b^2_1 + a^1_3b^3_1\\
T^2_1 = a^1_1b^1_2 + a^1_2b^2_2 + a^1_3b^3_2\\
T^3_1 = a^1_1b^1_3 + a^1_2b^2_3 + a^1_3b^3_3\\
\end{align*}
\end{minipage}
\begin{minipage}[t]{.30\textwidth}
\begin{align*}
T^1_2 = a^2_1b^1_1 + a^2_2b^2_1 + a^2_3b^3_1\\
T^2_2 = a^2_1b^1_2 + a^2_2b^2_2 + a^2_3b^3_2\\
T^3_2 = a^2_1b^1_3 + a^2_2b^2_3 + a^2_3b^3_3\\
\end{align*}
\end{minipage}
\begin{minipage}[t]{.30\textwidth}
\begin{align*}
{} \\
T^2_3 = a^3_1b^1_2 + a^3_2b^2_2 + a^3_3b^3_2\\
T^3_3 = a^3_1b^1_3 + a^3_2b^2_3 + a^3_3b^3_3
\end{align*}
\end{minipage}

\noindent
Let $E' = E + \langle a^1_1b_1^1, a^2_1b^1_1, a^1_1b^1_2, a^2_1b^1_2, a^3_1b^1_2, a^1_1b^1_3, a^2_1b^1_3, a^3_1b^1_3\rangle$ be obtained by adding to $E$ the subspace spanned by the first monomials appearing in the above generators. We can identify $D\otimes (A\otimes B)/E$ with $(D\otimes D) \oplus [D\otimes (A\otimes B)/E']$ and further with $S^2 D \oplus \Lambda^2 D \oplus [D\otimes (A\otimes B)/E']$.
Now taking into account that $\Lambda^2 D$ is mapped to zero we need to show that
\begin{equation}\label{eq:goal2}
\dim_\CC \ker [S^2D \oplus [D\otimes (A\otimes B)/E'] \to (\Lambda^2 A \otimes \Lambda^2 B)/(\mathrm{im}f^{\wedge \wedge}_E)] < 57.
\end{equation}

Since $\dim_\CC (E')_{10} = 2$, $\dim_\CC (E')_{01} = 2$ and $\dim_\CC (E')_{11} = 8$, it follows that $((A\otimes B)/E')_{10} \cong \mathbb{C}^{16}$, $((A\otimes B)/E')_{01} \cong \mathbb{C}^{16}$ and $((A\otimes B)/E')_{11} \cong \mathbb{C}^1$. Observe that $T_1^1, T^1_2 \in A_{0}\otimes B_1$, $T^2_3, T^3_3 \in A_1\otimes B_0$ and $T^2_1, T^2_2, T^3_1, T^3_2 \in A_0\otimes B_0$. In particular, $D$ is also $\mathbb{Z}^2$-graded.
Since the image of $\mathrm{im}f^{\wedge \wedge}_E$ is contained in degrees $(2,2), (2,1), (1,1), (1,2)$ and we have already killed the kernel of $f^{\wedge \wedge}_D$, it follows that the kernel of the map in \eqref{eq:goal2} is contained in:

\begin{align}\label{eq:bigsource}
[D_{10}\otimes D_{01}] \oplus [D_{10}\otimes ((A\otimes B)/E')_{01}] \oplus [D_{01}\otimes  ((A\otimes B)/E')_{10}] \notag\\
\oplus [D_{10}\otimes ((A\otimes B)/E')_{11}] \oplus [D_{01}\otimes  ((A\otimes B)/E')_{11}] \oplus [D_{00}\otimes  ((A\otimes B)/E')_{11}] 
\end{align}
where the first factor is embedded in $S^2 D$ by $s\otimes t \mapsto \frac{1}{2} (s\otimes t + t\otimes s)$. The image of every nonzero element of $[D_{00}\otimes  ((A\otimes B)/E')_{11}]$ has a non-zero projection onto the subspace spanned by 
\[
\langle a^1_2\wedge a^3_3\otimes b^1_1\wedge b^2_2, a^2_2\wedge a^3_3\otimes b^1_1\wedge b^2_2, a^1_2\wedge a^3_3\otimes b^1_1\wedge b^2_3, a^2_2\wedge a^3_3\otimes b^1_1\wedge b^2_3\rangle
\]
while the image of the other direct summands of the source project trivially on this four-dimensional subspace of the target. The $52$-dimensional subspace of the source spanned by the following subspaces is contained in the kernel:
\begin{align}\label{eq:bigkernel}
&D_{01} \otimes (\langle a^3_1, a^3_2\rangle \otimes \langle b^2_2,  b^2_3, b^3_2, b^3_3\rangle \oplus \langle a^3_2\rangle \otimes \langle b^1_2, b^1_3\rangle \oplus \langle a^3_3\otimes b^1_1\rangle)\notag\\
&D_{10} \otimes (\langle a^1_1,a^1_2,a^1_3,a^2_1,a^2_2,a^2_3\rangle \otimes \langle b^2_1, b^3_1\rangle\oplus \langle a^3_3\otimes b^1_1\rangle)\notag\\
&\langle (T^1_1 \otimes T^2_3 + T^2_3 \otimes T^1_1) -2 T^1_1 \otimes a^3_3 \otimes b^3_2\rangle\\
&\langle (T^1_1 \otimes T^3_3 + T^3_3 \otimes T^1_1) -2 T^1_1 \otimes a^3_3 \otimes b^3_3\rangle\notag\\
&\langle (T^1_2 \otimes T^2_3 + T^2_3 \otimes T^1_2) -2 T^1_2 \otimes a^3_3 \otimes b^3_2\rangle \notag\\
&\langle (T^1_2 \otimes T^3_3 + T^3_3 \otimes T^1_2) -2 T^1_2 \otimes a^3_3 \otimes b^3_3\rangle.\notag
\end{align}
Taking the quotient of \eqref{eq:bigsource} by the subspace spanned by $[D_{00}\otimes  ((A\otimes B)/E')_{11}]$ and \eqref{eq:bigkernel}, establishing \eqref{eq:goal2} is equivalent to showing that
\[
[D_{01}\otimes \langle a^3_3\rangle \otimes \langle b^1_2,b^1_3,b^2_2, b^2_3, b^3_2, b^3_3\rangle ]  \oplus [D_{10}\otimes \langle a^1_2, a^1_3, a^2_2,a^2_3\rangle \otimes \langle b^1_1\rangle] \to (\Lambda^2 A \otimes \Lambda^2 B)/(\mathrm{im} f^{\wedge\wedge}_E)
\]
has kernel of dimension smaller than $5$. The image of this $20$-dimensional space is spanned by
\begin{align*}
&\langle a^3_3\wedge a^1_1, a^3_3\wedge a^2_1 \rangle \otimes \langle b^1_1\wedge b^1_2, b^1_1\wedge b^1_3, b^1_1\wedge b^2_2, b^1_1\wedge b^2_3, b^1_1\wedge b^3_2, b^1_1\wedge b^3_3\rangle\\
&\langle a^3_3\wedge a^1_2, a^3_3\wedge a^1_3, a^3_3\wedge a^2_2, a^3_3\wedge a^2_3\rangle \otimes \langle b^1_1\wedge b^3_2, b^1_1\wedge b^3_3\rangle.
\end{align*}
In particular the image is also $20$-dimensional so the kernel is trivial.

\begin{remark}\label{rmk:m3_16}
The original proof in \cite{chl19} involved enumerating triples of $\mathbb{B}$-fixed codimension $16$ subspaces of $S[X]_{110}, S[X]_{101}$ and $S[X]_{011}$. Using the $(220)$ square test, it is enough to look at a single degree $110$. Note that for an ideal $[I]$ in $\Hilb_{S[X]}^{h_{16, X}}$
\[
(I^2)_{220} = I_{110}\cdot I_{110} + I_{200}\cdot I_{020}.
\]
In our case already the first summand is too large in all $8$ candidate $110$ spaces. 
\end{remark}

\section{Proof of the general version of the square test}
In the main part of the paper we used border apolarity for three factor Segre varieties. Here we need a more general setup of Segre-Veronese varieties.
Let $X = \PP^{n_1}\times \cdots \times \PP^{n_k}$ with $\mathbb{Z}^k$-graded homogeneous coordinate ring $S[X]=\CC[\alpha_{1,0},\ldots, \alpha_{k,n_k}]$.

Let $\LL$ be a very ample line bundle on $X$. Let $\sigma_r(X,\LL)$ denote the $r$-th secant variety of $X \subseteq \PP(H^0(X, \LL)^\vee)$, i.e., the closure of the set of all points of $\PP(H^0(X, \LL)^\vee)$ of $X$-border rank at most $r$.
We say that $(X,\LL)$ is \emph{not-$r$-secant defective} if $\sigma_r(X, \LL)$ has the expected dimension, i.e., 
\[
\dim \sigma_r(X, \LL) = \min \{h^0(X, \LL)-1, r(\dim X+1)-1\}.
\]
The pair $(X,\LL)$ is $r$\emph{-secant defective} if the dimension of $\sigma_r(X,\LL)$ is smaller than the expected dimension. Given a point $p\in X$ with ideal sheaf $\mathcal{I}_p$ by a double point supported at $p$ we mean the subscheme of $X$ defined by $(\mathcal{I}_p)^2$.
We use the following well-known consequence of Terracini's lemma \cite{Ter11} whose proof can be found for example in \cite{CGG05}.
 \begin{proposition}\label{lem:TERRACINI}
     Let $X = \PP^{n_1}\times \cdots \times \PP^{n_k}$, let $r, d_1,\ldots, d_k$ be positive integers and let $Z$ be a general union of $r$ double points on $X$, then 
     \[
     \dim \sigma_r(X, \OO_X(d_1,d_2,\ldots, d_k)) = H_{S[X]/I_Z}(d_1,\ldots, d_k)-1.
     \]
 \end{proposition}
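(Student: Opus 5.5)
We derive the statement from Terracini's lemma by identifying affine tangent spaces to the embedded variety with inverse systems of double points. Write $\mathbf{d}=(d_1,\ldots,d_k)$ and let $\nu\colon X\to \PP(\widetilde{S[X]}_{\mathbf{d}})=\PP(H^0(X,\OO_X(\mathbf{d}))^\vee)$ be the Segre--Veronese embedding. Here $\widetilde{S[X]}_{\mathbf{d}}=(S[X]_{\mathbf{d}})^\vee$, so a form $\Theta\in S[X]_{\mathbf{d}}$ is a linear functional on the ambient space via $\lrcorner$. Terracini's lemma says that for general $x_1,\ldots,x_r\in X$ and a general point $q$ of their span, the affine cone over the tangent space $T_q\sigma_r(X,\OO_X(\mathbf{d}))$ is $\widehat T_{x_1}\nu(X)+\cdots+\widehat T_{x_r}\nu(X)$, where $\widehat T$ denotes the affine cone over the embedded tangent space. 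Since $q$ is a smooth point of the secant variety, this gives
\[
\dim\sigma_r(X,\OO_X(\mathbf{d}))=\dim_\CC\Big(\sum_{i=1}^r \widehat T_{x_i}\nu(X)\Big)-1 .
\]

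The key step is a single point. Let $p=([v_1],\ldots,[v_k])$ with ideal sheaf $\mathcal{I}_p$, and let $J_p\subseteq S[X]$ be the saturated $\mathbb{Z}^k$-homogeneous ideal of the double point. I will show $\widehat T_p\nu(X)=\big((J_p)_{\mathbf{d}}\big)^\perp$. First, $\widehat T_p\nu(X)$ is the span of $v_1^{d_1}\cdots v_k^{d_k}$ and $v_1^{d_1}\cdots v_j^{d_j-1}w\cdots v_k^{d_k}$ for $w$ in the $j$-th factor and all $j$. These are the derivatives of the parametrization $(u_1,\ldots,u_k)\mapsto u_1^{d_1}\cdots u_k^{d_k}$ at $(v_1,\ldots,v_k)$, written in divided powers. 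Second, for $\Theta\in S[X]_{\mathbf{d}}$ one has $\Theta\lrcorner (u_1^{d_1}\cdots u_k^{d_k})=\Theta(u_1,\ldots,u_k)$. Hence $\Theta$ annihilates $\widehat T_p\nu(X)$ if and only if $\Theta$ and all its first partial derivatives vanish at $(v_1,\ldots,v_k)$. Third, Euler's relation $\sum_i u_{j,i}\,\partial\Theta/\partial u_{j,i}=d_j\Theta$ in each factor shows that this is equivalent to $\Theta$ vanishing to order $2$ at $p$ in an affine chart. This is exactly the condition that the section $\Theta\in H^0(X,\OO_X(\mathbf{d}))$ lies in $H^0(X,\mathcal{I}_p^2(\mathbf{d}))=(J_p)_{\mathbf{d}}$; the last equality uses that $J_p$ is saturated. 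Concretely, $J_p=\mathfrak{p}^2$ up to saturation, where $\mathfrak{p}$ is generated by the linear forms of each factor vanishing at $v_j$, and a local check in the chart $\alpha_{j,0}\neq0$ suffices.

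Summing over the points, with $Z$ the union of the double points at $x_1,\ldots,x_r$ and $I_Z=\bigcap_i J_{x_i}$, we get
\[
\sum_i \widehat T_{x_i}\nu(X)=\sum_i \big((J_{x_i})_{\mathbf{d}}\big)^\perp=\Big(\bigcap_i (J_{x_i})_{\mathbf{d}}\Big)^\perp=\big((I_Z)_{\mathbf{d}}\big)^\perp .
\]
This subspace has dimension $\dim_\CC S[X]_{\mathbf{d}}-\dim_\CC (I_Z)_{\mathbf{d}}=H_{S[X]/I_Z}(\mathbf{d})$. Subtracting one for projectivization gives the claim. Generality of $Z$ is exactly the generality of $x_1,\ldots,x_r$ required by Terracini's lemma.

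I expect the main obstacle to be the single-point identification, namely matching the affine tangent cone with $(J_p)_{\mathbf{d}}^\perp$. One must handle the multigraded Euler relations correctly so that the radial directions in each factor are accounted for. One must also ensure that $(J_p)_{\mathbf{d}}$ is computed from the saturated ideal, so that it agrees with global sections of $\mathcal{I}_p^2(\mathbf{d})$. The remaining steps are formal consequences of Terracini's lemma and of the linear duality between intersections and sums of perps.
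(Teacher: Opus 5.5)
Your proposal is correct and follows the standard route. The paper gives no proof of its own: it cites Terracini's lemma and \cite{CGG05}, where the statement is derived exactly as you do, by identifying the affine tangent space at each general $x_i$ with the annihilator of the degree-$(d_1,\ldots,d_k)$ part of the saturated ideal of the double point at $x_i$, and then summing these perps over the $r$ points.
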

 
We can now prove our second test of the topological condition.

\begin{proposition}\label{prop:st_general}
Let $Y=\mathbb{P}^{n_1}\times \cdots \times \mathbb{P}^{n_j}$ and $X = Y \times \mathbb{P}^{n_{j+1}}\times \cdots \times \mathbb{P}^{n_k}$. Let $r, d_1, d_2,\ldots, d_j$ be positive integers such that 
$r(\dim Y+1) \leq \dim_\CC S[X]_{(d_1,\ldots, d_j, 0, \ldots, 0)}$ and $(Y,\OO_Y(d_1,\ldots, d_j))$ is not-$r$-secant defective. If $[I]\in \Slip{r}{X}$, then for all integers $e_1\geq d_1, \ldots, e_j\geq d_j$
\begin{equation}\label{ineq:st}
\HF{S[X]/I^2}{e_1,\ldots, e_j,0,\ldots, 0} \geq r(\dim Y+1).    
\end{equation}

\end{proposition}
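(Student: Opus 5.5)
The plan is to use semicontinuity of $\HF{S[X]/I^2}{\mathbf{e}}$ on $\Slip{r}{X}$, where $\mathbf{e}=(e_1,\ldots,e_j,0,\ldots,0)$. This reduces the inequality to a general point, which we then handle with Proposition~\ref{lem:TERRACINI}.

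\emph{Step 1: semicontinuity.} Over $\Hilb_{S[X]}^{h_{r,X}}$ we have the universal family of ideals. In each fixed degree it is a locally free sheaf, namely a subbundle of the trivial bundle $S[X]_{\mathbf{v}}\otimes\OO$ of rank $\dim S[X]_{\mathbf v}-h_{r,X}(\mathbf v)$. The degree $\mathbf{e}$ part of $I^2$ is the image of the multiplication map $\bigoplus_{\mathbf{v}+\mathbf{w}=\mathbf{e}} I_{\mathbf v}\otimes I_{\mathbf w}\to S[X]_{\mathbf e}$, and only finitely many summands occur. Globally this map is a morphism of vector bundles into a trivial bundle. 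Its rank is lower semicontinuous, so $[I]\mapsto \HF{S[X]/I^2}{\mathbf e}$ is upper semicontinuous. The set where $\HF{S[X]/I^2}{\mathbf e}\ge r(\dim Y+1)$ is therefore closed. Since $\Slip{r}{X}$ is by definition the closure of the radical saturated points, it suffices to prove the inequality on a dense subset of those. We take $I=I_Z$, the ideal of $r$ general reduced points of $X$. These points do have Hilbert function $h_{r,X}$, so they are dense in $\Slip rX$.

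\emph{Step 2: general points.} Let $I$ be the ideal of $r$ general points $p_1,\ldots,p_r\in X$. Then $I^2\subseteq I^{(2)}=\bigcap \mathfrak p_i^2$, so $\HF{S[X]/I^2}{\mathbf e}\ge \HF{S[X]/I^{(2)}}{\mathbf e}$. Next, degree $\mathbf e$ forms involve only the variables of the first $j$ factors. A form of degree $\mathbf e$ vanishes doubly at $p_i$ (in $X$) if and only if it vanishes doubly at the projection $q_i\in Y$. The normal directions along the last factors contribute nothing, because the form is constant in those variables. Hence $I^{(2)}_{\mathbf e}=(I_{Z'})_{(e_1,\ldots,e_j)}$, where $Z'$ is the union of double points at the $q_i$, which are $r$ general points of $Y$. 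By Proposition~\ref{lem:TERRACINI}, $\HF{S[Y]/I_{Z'}}{e_1,\ldots,e_j}=\dim\sigma_r(Y,\OO_Y(e_1,\ldots,e_j))+1$.

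\emph{Step 3: passing from $d$ to $e$.} We need $\dim \sigma_r(Y,\OO_Y(\mathbf e))=r(\dim Y+1)-1$. Secant dimensions are monotone under raising the multidegree. One way to see this is through the Terracini count: the conditions imposed by double points in degree $\mathbf d$ stay independent after multiplying by a general form of degree $\mathbf e-\mathbf d$ not vanishing at the points, because the $q_i$ are general. So $\HF{S[Y]/I_{Z'}}{\mathbf e}\ge \HF{S[Y]/I_{Z'}}{\mathbf d}$. The latter equals $\min\{\dim S[Y]_{\mathbf d}, r(\dim Y+1)\}=r(\dim Y+1)$, by non-defectivity and the hypothesis $r(\dim Y+1)\le \dim S[X]_{(d_1,\ldots,d_j,0,\ldots)}=\dim S[Y]_{\mathbf d}$.

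The main obstacle I expect is making Step~1 rigorous: the image sheaf in degree $\mathbf e$ must really define a closed condition on the possibly nonreduced scheme $\Slip rX$, and closedness must be checked on reduced points. I would handle this by working with the rank of the universal multiplication map on the reduced variety. The comparison $I^2$ versus $I^{(2)}$ and the monotonicity in Step~3 are routine by comparison.
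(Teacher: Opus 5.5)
Your proof is correct. Steps~1 and~3 match the paper, but you get from $X$ down to $Y$ by a genuinely different route.

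\textbf{Where you agree with the paper.} The paper also uses upper semicontinuity of $\HF{S[X]/I^2}{\mathbf e}$ on $\Hilb_{S[X]}^{h_{r,X}}$; it cites the $k=1$ case from \cite{Man22} and says the general case is similar. It then specializes to $r$ general points and uses $I^2\subseteq I_Z$, with $Z$ the union of double points. It passes from $\mathbf d$ to $\mathbf e$ by a Castelnuovo exact sequence argument. That is the same monotonicity you obtain by multiplying by a form of degree $\mathbf e-\mathbf d$ not vanishing at the $q_i$, which is a nonzerodivisor on $S[Y]/I_{Z'}$.

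\textbf{Where you differ.} The paper first reduces to $j=k$ by quoting \cite[Theorem~1.2]{Man23}: if $[I]\in\Slip{r}{X}$, then $[I\cap S[Y]]\in\Slip{r}{Y}$. It also uses tacitly that $(I^2)_{(e_1,\ldots,e_j,0,\ldots,0)}=((I\cap S[Y])^2)_{(e_1,\ldots,e_j)}$, and then works entirely on $Y$. You instead degenerate inside $\Slip{r}{X}$ itself. Only at the general point do you observe two things. First, $(\mathfrak p_i^2)_{(\mathbf e,0)}=(\mathfrak p_{q_i}^2)_{\mathbf e}$, because generators of $\mathfrak p_i$ coming from the last factors cannot contribute in degrees whose last components vanish. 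Second, projections of general points of $X$ are general points of $Y$.

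\textbf{What each route buys.} Your argument is self-contained: it avoids the nontrivial compatibility of Slip with restriction to a sub-Cox ring, at the price of a short computation at general points. The paper's route is shorter once that theorem is available. It also makes explicit that the inequality on $X$ is really a statement about $\Slip{r}{Y}$.

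Your worry about Step~1 is unnecessary. The locus where the universal multiplication map has rank at most $m$ is a closed subset. Only the underlying topology of $\Slip{r}{X}$ is used, so any nonreduced structure plays no role.
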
 
\begin{proof}
Let $S[Y]=\CC[\alpha_{1,0},\ldots, \alpha_{j,n_j}] \subseteq S[X]$ be the Cox ring of $Y$. It follows from \cite[Theorem~1.2]{Man23} that if $[I]\in \Slip{r}{X}$, then $[I\cap S[Y]]\in \Slip{r}{Y}$. Therefore, we may assume that $j=k$, i.e., that $Y=X$.

First we claim that $\HF{S[X]/I^2}{e_1,\ldots, e_k}$ is upper semicontinuous on the set of $\CC$-points of $\Hilb_{S[X]}^{h_{r,X}}$. For $k=1$ this is proved in \cite[Proof of Theorem~1.1]{Man22}.
The proof in the general case is similar. Therefore, in order to prove the proposition, it is sufficient to show that $\HF{S[X]/I^2}{e_1,\ldots, e_k} \geq r(\dim X+1)$ for $[I]\in \Hilb_{S[X]}^{h_{r,X}}$ that is an ideal of $r$ points in general position in $X$. Let $R = \{p_1,\ldots,p_r\}$ be the subscheme of $X$ defined by $I$ and let $Z$ be the union of $r$ double points with the same support as $R$.
We have 
\begin{equation}\label{eq:squarevssquare}
    I^2 = (I_{p_1}\cap \cdots \cap I_{p_r})^2\subseteq I_{p_1}^2\cap \cdots \cap I_{p_r}^2 = I_Z.
\end{equation}
Therefore, $\HF{S[X]/I^2}{e_1,\ldots, e_k}  \ge \HF{S[X]/I_Z}{e_1,\ldots, e_k}$. Since $(X,\OO_X(d_1,\ldots, d_k))$ is not-$r$-secant defective and $r(\dim X+1) \le H^0(X, \OO_X(d_1,\ldots,d_k))$
it follows by a standard argument with Castelnuovo exact sequence that $(X,\OO_X(e_1,\ldots, e_k))$ is also not-$r$-secant defective. Therefore, $\HF{S[X]/I_Z}{e_1,\ldots, e_k} = r(\dim X+1)$ by Proposition~\ref{lem:TERRACINI}. 
\end{proof}

We now specialize to the case $X=\PP^n\times \PP^n\times \PP^n$.

\begin{corollary}\label{c:211test}
    If $X=\PP^n\times \PP^n\times\PP^n$ for $n\geq 3$ and $r$ is a positive integer such that $(3n+1)r \leq (n+1)^3$, then 
    \begin{equation}
    \HF{S[X]/I^2}{e_1,e_2,e_3} \geq (3n+1)r    
    \end{equation}
	for all $e_1,e_2,e_3\ge 1$ and for all $[I]\in \Slip{r}{X}$.
\end{corollary}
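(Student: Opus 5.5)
This is Proposition~\ref{prop:st_general} with $j=k=3$, $Y=X=\PP^n\times\PP^n\times\PP^n$ and $(d_1,d_2,d_3)=(1,1,1)$. Here $\dim Y+1=3n+1$ and $\dim_\CC S[X]_{111}=(n+1)^3$. The numerical hypothesis $r(\dim Y+1)\le \dim_\CC S[X]_{(1,1,1)}$ is therefore exactly the assumption $(3n+1)r\le (n+1)^3$. Once the non-defectivity hypothesis is checked, Proposition~\ref{prop:st_general} gives $\HF{S[X]/I^2}{e_1,e_2,e_3}\ge (3n+1)r$ for all $e_i\ge 1$ and all $[I]\in\Slip{r}{X}$, which is the claim.

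The one remaining input is that $(\PP^n\times\PP^n\times\PP^n,\OO(1,1,1))$ is not-$r$-secant defective whenever $(3n+1)r\le (n+1)^3$. I would take this from the known classification of defective secant varieties of Segre products of three factors. For balanced three-factor Segre products $\PP^n\times\PP^n\times\PP^n$, the only defective case is $n=2$, $r=4$, and Lickteig showed that every other case, in particular every case with $n\ge 3$, has the expected dimension. This is why the statement requires $n\ge 3$: for $n=2$ and $r=4$ we have $7\cdot 4=28>27$, so the case is excluded numerically anyway, but the restriction keeps the citation clean. By Terracini's lemma, the generic $r$-th secant has dimension $r(3n+1)-1$ in the nondefective range $r(3n+1)\le (n+1)^3$.

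The main obstacle is this non-defectivity input; everything else is a direct substitution into Proposition~\ref{prop:st_general}. I would cite Lickteig's theorem (also recorded in Abo--Ottaviani--Peterson) for the statement that $\sigma_r(\PP^n\times\PP^n\times\PP^n)$ is nondefective for $n\ge 3$. If a self-contained argument were wanted, I would instead try Terracini's lemma, verified by specializing double points, or an induction on $n$ via the Castelnuovo sequence. That route is considerably harder, so I would rely on the citation.
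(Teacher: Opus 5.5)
Your proposal is correct and follows the paper's proof: apply Proposition~\ref{prop:st_general} with $j=k=3$ and $d_1=d_2=d_3=1$, so the numerical hypothesis is exactly $(3n+1)r\le (n+1)^3$. The paper likewise takes non-defectivity of $(X,\OO_X(1,1,1))$ from Lickteig \cite[Theorem~4.4]{Lic85}.
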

\begin{proof}
Apply Proposition~\ref{prop:st_general} with $j=k=3$ and $d_1 = d_2 = d_3 = 1$. The fact that $(X, \OO_X(1,1,1))$ is not-$r$-secant defective is established in \cite[Theorem~4.4]{Lic85}.
\end{proof}

\begin{corollary}\label{c:220test}
    If $X=\PP^n\times \PP^n\times \PP^n$ and $r$ is a positive integer such that $(2n+1)r \leq \binom{n+2}{2}(n+1)$, then 
    \begin{equation}
    \HF{S[X]/I^2}{e_1,e_2,0} \geq (2n+1)r    
    \end{equation}
    for all $e_1,e_2\ge 1$ with $e_1 + e_2 \ge 3$ and for all $[I]\in \Slip{r}{X}.$
\end{corollary}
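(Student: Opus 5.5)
We will apply Proposition~\ref{prop:st_general} with $k=3$, $j=2$, $Y=\PP^n\times\PP^n$ (the first two factors of $X$) and a pair of degrees $(d_1,d_2)$ chosen according to $(e_1,e_2)$. Here $\dim Y+1 = 2n+1$, so \eqref{ineq:st} is exactly the desired inequality $\HF{S[X]/I^2}{e_1,e_2,0}\ge (2n+1)r$. Since $e_1,e_2\ge 1$ and $e_1+e_2\ge 3$, we have either $e_1\ge 2, e_2\ge 1$ or $e_1\ge 1, e_2\ge 2$. By the symmetry exchanging the first two factors we may assume $e_1\ge 2$, and we take $(d_1,d_2)=(2,1)$.

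With this choice there are two hypotheses of Proposition~\ref{prop:st_general} to check. The dimension hypothesis reads $r(2n+1)\le \dim_\CC S[X]_{(2,1,0)} = \binom{n+2}{2}(n+1)$, which is exactly the hypothesis of the corollary.

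The second hypothesis is that $(\PP^n\times\PP^n,\OO(2,1))$ is not-$r$-secant defective for every $r$ with $(2n+1)r\le \binom{n+2}{2}(n+1)$. This is the main point. The natural route is to cite the classification of defective two-factor Segre--Veronese varieties, for instance the results of Abo--Brambilla on $\PP^m\times\PP^n$ embedded by $\OO(d,1)$, or the result of Baur--Draisma for $\PP^n\times\PP^n$ in bidegree $(2,1)$. These show that the only defective cases of $\OO(2,1)$ on $\PP^m\times\PP^n$ occur in unbalanced situations ($m$ much smaller than $n$, e.g.\ $m=2$, $n$ odd), or for $r$ with $(2n+1)r$ exceeding the ambient dimension. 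In our balanced case $m=n$, the condition $(2n+1)r\le\binom{n+2}{2}(n+1)$ is the subabundant range, and nondefectivity holds there.

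If a reference covering exactly this case is not available, the fallback is an inductive degeneration argument of Horace type via Proposition~\ref{lem:TERRACINI}: one shows that $r$ general double points impose independent conditions on forms of bidegree $(2,1)$ by specializing points onto a hyperplane $\PP^{n-1}\times\PP^n$ and using the Castelnuovo sequence. I expect this verification to be the main obstacle, and I would rely on the literature citation.

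Once both hypotheses hold, Proposition~\ref{prop:st_general} gives the inequality for all $e_1\ge 2$ and $e_2\ge1$. The symmetric case is handled by taking $(d_1,d_2)=(1,2)$, and this completes the proof.
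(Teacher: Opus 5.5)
Your proposal is correct and follows the paper's proof. Both apply Proposition~\ref{prop:st_general} with $j=2$, $k=3$ and $(d_1,d_2)=(2,1)$ or $(1,2)$, according to which of $e_1,e_2$ is at least $2$; both note that the dimension hypothesis is exactly the corollary's hypothesis; and both take non-defectivity of $(\PP^n\times\PP^n,\OO(2,1))$ from the literature. For that last step the paper cites \cite[Theorem~1.2]{Abo10}. Your alternative reference does not do the job: Baur--Draisma only treats homogeneous varieties of dimension at most $3$, so it cannot cover $\PP^n\times\PP^n$ for general $n$.
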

\begin{proof}
    Apply Proposition~\ref{prop:st_general} with $j=2$, $k=3$ and $(d_1,d_2) = (1,2)$ if $e_1\leq e_2$ and $(d_1,d_2)=(2,1)$ in the other case. The fact that $(Y, \OO_Y(d_1,d_2))$ is not-$r$-secant defective is established in \cite[Theorem~1.2]{Abo10}.
\end{proof}

\begin{remark}
    \hfill
    \begin{enumerate}
    \item The bound in Proposition~\ref{prop:st_general} is not sharp in general as the containment in \eqref{eq:squarevssquare} might be proper.
    For example, let $X=\PP^3\times \PP^3\times \PP^3$ and $r=7$. Let $J$ be the ideal of $7$ general double points of $X$ and $I$ be the ideal of $7$ general points of $X.$ It can be checked in \textit{Macaulay2} that $\HF{S[X]/J}{3,1,0} = 49$ but $\HF{S[X]/I^2}{3,1,0} = 53$.
    \item Observe that one does not need to know the full ideal to check if it passes a square test in a certain degree. In fact, sometimes it is enough to consider a single degree of the ideal. See for example Remark~\ref{rmk:m3_16}.
    \item Proposition~\ref{prop:st_general} and its corollaries can be generalized to the $r$-secant defective case. We explain how to do this for Proposition~\ref{prop:st_general} with $j=k$.
    Let $X = \PP^{n_1}\times \cdots \times \PP^{n_k}$ and $\mathbf{e} = (e_1,\ldots, e_k) \in \ZZ^{k}_{>0}$. Assume that $r(\dim X + 1) \le h^0(X, \OO_X(\mathbf{e}))$. Let $\delta_r(X, \OO_X(\mathbf{e})) = r(\dim X+1) -1  - \dim \sigma_r(X, \OO_X(\mathbf{e}))$ be the $r$-th defect of $(X, \OO_X(\mathbf{e}))$. If $[I]\in \Hilb_{S[X]}^{h_{r, X}}$ is in $\Slip{r}{X}$, then $H_{S[X]/I^2}(\mathbf{e}) \ge r(\dim X+1) - \delta_r(X, \OO_X(\mathbf{e}))$.
    \end{enumerate}
\end{remark}

\subsection{Revisiting the Veronese case}

By using the connection with secant defectivity we can improve on the bound on $d$ in \cite[Theorem~1.1]{Man22} in the special case of $k=2$ and $h = h_{r, \PP^n}$.

\begin{proposition}\label{prop:st_veronese}
    Let $n, r$ be positive integers with $r\geq n+1$ and $X= \PP^n$. Let $d_{n,r} = \min\{d \mid r(n+1) \leq \binom{n+d}{d}\}$.    
    If $[I]\in \Slip{r}{X}$, then $\HF{S[X]/I^2}{d} \geq (n+1)r$ for all 
    \begin{equation}\label{eq:inequality_veronese}
    d \geq \begin{cases} d_{n,r} + 1& \text{ if } (n,r) = (2,5) \text{ or } (n,r) =(4,14) \\ d_{n,r} & \text{ otherwise}\end{cases}.    
    \end{equation}
    The above bounds on $d$ are sharp.
    \end{proposition}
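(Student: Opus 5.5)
The plan is to follow the proof of Proposition~\ref{prop:st_general} in the case $k=1$, $X=\PP^n$, with the Veronese secant-defectivity classification (Alexander--Hirschowitz) in place of the Segre results. The statement has two parts: the inequality for $d$ above the stated threshold, and sharpness of the threshold.

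\textbf{The inequality.} First, $\HF{S[X]/I^2}{d}$ is upper semicontinuous on $\Hilb_{S[X]}^{h_{r,X}}$; this is proved in \cite[Proof of Theorem~1.1]{Man22}. It therefore suffices to bound it below for the ideal $I$ of $r$ general points. If $Z$ is the scheme of $r$ general double points, then $I^2\subseteq I_Z$ as in \eqref{eq:squarevssquare}, so $\HF{S[X]/I^2}{d}\ge \HF{S[X]/I_Z}{d}$. By Proposition~\ref{lem:TERRACINI}, the right-hand side equals $\dim\sigma_r(\nu_d(\PP^n))+1$. The Alexander--Hirschowitz theorem says that $\nu_d(\PP^n)$ is not $r$-secant defective except in the cases $d=2$ with $2\le r\le n$, and $(n,d,r)\in\{(2,4,5),(3,4,9),(4,4,14),(4,3,7)\}$. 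In the non-defective case, $\HF{S[X]/I_Z}{d}=\min\{\binom{n+d}{d},r(n+1)\}=r(n+1)$ whenever $d\ge d_{n,r}$.

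Next I check that the exceptions never occur in the allowed range of $d$. The quadric case is excluded because $r\ge n+1$. For $(4,3,7)$ we have $\binom{7}{3}=35=7\cdot 5$, so $d_{4,7}=3$. For $(3,4,9)$ we have $\binom{6}{3}=20<36$ and $\binom{7}{4}=35<36$, so $d_{3,9}=5>4$. For $(2,5)$ we get $d_{2,5}=4$, and for $(4,14)$ we get $d_{4,14}=4$ since $\binom{8}{4}=70$; both are defective exactly at $d=4$, which explains the shifted bound. For $d>d_{n,r}$, or $d=d_{n,r}+1$ in these two exceptional cases, non-defectivity holds because the list of exceptions contains only one value of $d$ for each $(n,r)$. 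The case $(4,7)$ with $d=3$ is the main obstacle, since $\nu_3(\PP^4)$ is $7$-defective by one, so the argument above gives only $34<35$. To handle it I would argue directly that for $7$ general points, $(I^2)_3\subsetneq (I_Z)_3$. $I_Z$ contains a unique cubic, namely the secant variety of the rational normal quartic through the $7$ points, whereas $I^2$ is generated in degree $\ge 4$ because $I$ has no quadrics generators $\ldots$ in fact $I_2$ has dimension $15-7=8$, so $(I^2)_3=0$ and $\HF{S[X]/I^2}{3}=35$. More generally, $(I^2)_d$ can only differ from $(I_Z)_d$ when $I_{\lceil d/2\rceil}$ is nonzero, and this observation settles the case. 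I would also use the semicontinuity lower bound $\HF{S[X]/I^2}{d}\ge\min(\binom{n+d}{d},\cdot)$ wherever it is convenient.

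\textbf{Sharpness.} For $d=d_{n,r}-1$ we have $\binom{n+d}{d}<r(n+1)$, so the bound fails trivially. For $(n,r)=(2,5)$ and $(4,14)$ at $d=4$, I would exhibit general points with $\HF{S[X]/I^2}{4}<r(n+1)$.
\begin{itemize}
\item For $(2,5)$: the conic through the $5$ points gives $q^2\in(I^2)_4$, and $\HF{S[X]/I^2}{4}\le 15-1=14<15$.
\item For $(4,14)$: the defect comes from the quadric hypersurface through the $14$ points. Since $\dim I_2=15-14=1$, we have $q^2\in (I^2)_4$, so $\HF{S[X]/I^2}{4}\le 69<70$.
\end{itemize}
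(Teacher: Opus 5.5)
Your route is essentially the paper's: semicontinuity of $\HF{S[X]/I^2}{d}$, reduction to general points, the containment $I^2\subseteq I_Z$ together with Terracini, and Alexander--Hirschowitz to isolate $(n,r)\in\{(2,5),(3,9),(4,7),(4,14)\}$. Sharpness then comes from $\binom{n+d}{d}<r(n+1)$ below $d_{n,r}$, and from $q^2\in (I^2)_4$ in the two exceptional cases. There are two minor differences, and both are fine. You check non-defectivity degree by degree from the AH list instead of using the Castelnuovo step inside Proposition~\ref{prop:st_general}. You also verify sharpness only for general points, whereas the paper shows it for every $[I]\in\Hilb_{S[X]}^{h_{r,X}}$.

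The step that does not hold up as written is the crux case $(n,r,d)=(4,7,3)$.

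\textbf{The justification given for $(I^2)_3=0$.} You justify it by saying that $I$ has no quadric generators and that $\dim I_2=8$. The first claim is false: every element of the $8$-dimensional space $I_2$ is a minimal generator. The second says nothing about degree $3$.

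\textbf{The fact actually needed.} What you need is $I_1=0$. Since $r=7\ge 5=n+1$, we have $h_{7,\PP^4}(1)=5=\dim_\CC S[X]_1$, and also $I_0=0$. Hence $(I^2)_3=I_0I_3+I_1I_2=0$, so $\HF{S[X]/I^2}{3}=35=(n+1)r$. This is exactly the paper's argument. It holds for every $[I]\in\Hilb_{S[X]}^{h_{7,X}}$, so you need neither semicontinuity nor the description of $(I_Z)_3$ via the secant variety of the rational normal quartic.

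\textbf{The ``more generally'' remark.} This remark should be dropped because it is false. If $I_{\lceil d/2\rceil}=0$ then $(I^2)_d=0$, but $(I_Z)_d$ need not vanish. For example, take $n=3$, $r=286$, $d=20$. Then $I_{10}=0$, so $(I^2)_{20}=0$. On the other hand, $\dim_\CC (I_Z)_{20}\ge\binom{23}{3}-4\cdot 286=627$. So it is not true that the two can differ only when $I_{\lceil d/2\rceil}\neq 0$.
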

\begin{proof}
   By Proposition~\ref{prop:st_general} it is enough to consider the cases where $(X, \OO_{X}(d_{n,r}))$ is $r$-secant defective.
   By the result of Alexander and Hirschowitz \cite{AH95} there is a finite list of pairs $(n,r)$ that need to be checked. We first prove that \eqref{eq:inequality_veronese} holds.
   \begin{itemize}
       \item If $(n,r) = (2,5)$, then $d_{n,r} = 4$ and $(X, \OO_X(5))$ is not-$5$-secant defective.
       \item If $(n,r) = (3,9)$, then $d_{n,r} = 5$ and $(X, \OO_X(5))$ is not-$9$-secant defective.
       \item If $(n,r) = (4,7)$, then $d_{n,r} = 3$. The pair $(X, \OO_X(3))$ is $7$-secant defective but any ideal $I$ in $\Hilb_{S[X]}^{h_{r,X}}$ has $I_1 = 0$, so $\HF{S[X]/I^2}{3} = \dim_\CC S[X]_3 = 35 = (n+1)r$. For $d>d_{n,r}$ the pair $(X, \OO_X(d))$ is not-$7$-secant defective so Proposition~\ref{prop:st_general} applies.
       \item If $(n,r) = (4,14)$, then $d_{n,r} = 4$ and $(X, \OO_X(5))$ is not-$14$-secant defective.
   \end{itemize}
   By definition $d_{n,r}$ is the smallest integer $d$ for which $\HF{S[X]/K}{d} \ge r(n+1)$ is possible for any homogeneous ideal $K\subseteq S[X]$. Therefore, to check sharpness of \eqref{eq:inequality_veronese} it is enough to show that if $[I]\in \Slip{r}{X}$ for $(n,r) = (2,5)$ or $(n,r) = (4,14)$, then $\HF{S[X]/I^2}{d_{n,r}} < r(n+1)$. In both cases, $d_{n,r} = 4$ and $r(n+1) = \dim_\CC S[X]_4$. Furthermore, if $[I]$ is any ideal in $\Hilb_{S[X]}^{h_{r,X}}$, then $I_1 = 0$ and $\dim_\CC I_2 = 1$. It follows that $\HF{S[X]/I^2}{4} = \dim_\CC S[X]_4 - 1 = r(n+1) - 1 < r(n+1)$.
\end{proof}

\begin{remark}
Proposition~\ref{prop:st_general} improves the bounds on $d$ from \cite[Theorem~1.1]{Man22}. For example $d_{2,7} = 5 < 8$.
    \end{remark}

\section*{Acknowledgments}
I would like to thank Joseph M. Landsberg both for suggesting me tensors on which I could test my methods and for many useful comments on how to improve the presentation.
While working on this project I was partially supported by the NSF grant CCF-2203618. I would like to thank the Simons Institute for the Theory of Computing, where I worked on this project
during the fall 2025 program Complexity and Linear Algebra. My research was supported in part by Apple, Fall 2025.

	\bibliographystyle{abbrv}

\end{document}